\documentclass[10pt, twoside]{amsart}
\usepackage{amssymb, latexsym, amsmath}
\usepackage{graphicx,latexsym, epsfig}
\usepackage[all]{xy}
\usepackage{hyperref}

\numberwithin{equation}{section}

\theoremstyle{plain}
\newtheorem{theo}{Theorem}[section]
\newtheorem{coro}[theo]{Corollary}
\newtheorem{lemm}[theo]{Lemma}
\newtheorem{prop}[theo]{Proposition}

\theoremstyle{definition}

\newtheorem{exam}[theo]{Example}

\theoremstyle{remark}

\newcommand{\NNN}{\mathbb{N}} 
\newcommand{\ZZZ}{\mathbb{Z}} 
\newcommand{\ZP}{ \mathbb{\ZZZ}_{p}} 
\newcommand{\QQQ}{\mathbb{Q}} 
\newcommand{\QP}{\mathbb{\QQQ}_{p}} 
\newcommand{\QPB}{\overline{\QQQ}_{p}} 

\newcommand{\FFF}{\mathbb{F}} 
\newcommand{\FP}{\mathbb{F}_{p}} 
\newcommand{\FPB}{\overline{\mathbb{F}}_{p}} 

\newcommand*{\rom}[1]{\uppercase\expandafter{\romannumeral #1\relax}}

\newcommand{\maxi}[1]{\mathfrak{m}_{#1}} 
\newcommand{\roi}{\mathcal{O}_{E}}
\newcommand{\maxid}{\mathfrak{m}_{E}}
\newcommand{\GL}[1]{\mathrm{GL}_{#1}} 


\newcommand{\rnk}{\mathrm{rk}}

\newcommand{\Hom}{\mathrm{Hom}}
\newcommand{\sequiv}{\overset{\centerdot}\equiv}

\newcommand{\baseo}{\emph{e}_{1}}
\newcommand{\baset}{\emph{e}_{2}}
\newcommand{\baseth}{\emph{e}_{3}}
\newcommand{\Baseo}{\emph{E}_{1}}
\newcommand{\Baset}{\emph{E}_{2}}
\newcommand{\Baseth}{\emph{E}_{3}}
\newcommand{\hodgen}{\mathrm{t}_{H}}
\newcommand{\newtonn}{\mathrm{t}_{N}}
\newcommand{\val}[1]{v_{p}(#1)}
\newcommand{\Val}[1]{v_{p}\big{(}#1\big{)}}
\newcommand{\vphi}{\varphi}

\newcommand{\lambdat}{\eta}

\newcommand{\filo}{\mathrm{Fil}^{1}}
\newcommand{\filt}{\mathrm{Fil}^{2}}
\newcommand{\filr}{\mathrm{Fil}^{r}}

\newcommand{\fil}[1]{\mathrm{Fil}^{#1}}
\newcommand{\mfd}{\mathfrak{D}}
\newcommand{\mfl}{\mathfrak{L}}
\newcommand{\mflo}{\mathfrak{L}_{1}}
\newcommand{\mflt}{\mathfrak{L}_{2}}

\newcommand{\mfm}{\mathfrak{M}}
\newcommand{\mcm}{\mathcal{M}}
\newcommand{\mcf}{\mathcal{F}}

\newcommand{\Bdr}{\mathbf{B}_{\mathrm{dR}}}

\newcommand{\Bst}{\mathbf{B}_{\mathrm{st}}}

\newcommand{\Dst}{\mathrm{D_{st}}}

\newcommand{\ttt}{\mathbf{t}}

\hypersetup{
    colorlinks,%
    citecolor=black,%
    filecolor=black,%
    linkcolor=blue,%
    urlcolor=black
}

\begin{document}

\title{Reduction modulo $p$ of certain semi-stable representations}

\author{Chol Park}

\address{Department of Mathematics \\
		The University of Toronto}
		
\email{cpark@math.toronto.edu}

\maketitle
\begin{abstract}
Let $p>3$ be a prime number and let $G_{\QP}$ be the absolute Galois group of $\QP$. In this paper, we find Galois stable lattices in the three-dimensional irreducible semi-stable and non-crystalline representations of $G_{\QP}$ with Hodge--Tate weights $(0,1,2)$ by constructing their strongly divisible modules. We also compute the Breuil modules corresponding to the mod $p$ reductions of the strongly divisible modules, and determine which of the representations has an absolutely irreducible mod $p$ reduction.
\end{abstract}

\setcounter{tocdepth}{1}
\tableofcontents

\section{Introduction}
Let $p>3$ be a prime number and let $E$ be a finite extension of $\QP$. We write $G_{\QP}$ for the Galois group $\mathrm{Gal}(\QPB/\QP)$ and $I_{\QP}$ for the inertia subgroup of $G_{\QP}$. In this paper, we construct strongly divisible modules of the admissible filtered $(\phi,N)$-modules that correspond to the $3$-dimensional irreducible semi-stable and non-crystalline $E$-representations of $G_{\QP}$ with Hodge-Tate weights $(0,1,2)$. By a result of Liu~\cite{Liu}, this is equivalent to constructing Galois stable lattices in the semi-stable representations.  We also compute the Breuil modules corresponding to the mod $p$ reductions of the strongly divisible modules to determine which of the semi-stable representations has an absolutely irreducible mod $p$ reduction. As a consequence, if $\bar{\rho}:G_{\QP}\rightarrow\mathrm{GL}_{3}(\FPB)$ is an irreducible mod $p$ reduction of a semi-stable and non-crystalline representation with Hodge-Tate weights $(0,1,2)$, then
$\bar{\rho}|_{I_{\QP}}$ is isomorphic to either $$\omega_{3}^{2p+1}\oplus\omega_{3}^{2p^{2}+p}\oplus\omega_{3}^{2+p^{2}}\hspace{0.1cm}\mbox{  or  }\hspace{0.1cm} \omega_{3}^{p+2}\oplus\omega_{3}^{p^{2}+2p}\oplus\omega_{3}^{1+2p^{2}}$$
where $\omega_{3}$ is the fundamental character of level $3$.

This work is the second part of the project in which we construct the irreducible components of deformation spaces whose characteristic $0$ closed points are the semi-stable lifts with Hodge--Tate weights $(0,1,2)$ of a fixed irreducible representation $\bar{r} : G_{\QP} \rightarrow \mathrm{GL}_3(\overline{\mathbb{F}}_p)$ and compute the Hilbert--Samuel multiplicities of their special fibers, following the strategy in~\cite{BM,Savitt}. The existence of these deformation rings was proved by Kisin~\cite{Kisin}. The geometric structure of these local deformation spaces is described by a conjecture of Breuil--M\'ezard~\cite{BM} as well as a recent refinement due to  Emerton--Gee~\cite{EG}.  Thanks to the work of Kisin~\cite{KisinF}, this conjecture is known for $\mathrm{GL}_2$ over $\QP$.  Gee and Kisin
\cite{GK} have recently proved the Breuil--M\'ezard conjecture for $2$-dimensional potentially Barsotti-Tate representations of $G_{K}$, where $K$ is a finite unramified extension of $\QP$.  This paper begins to address the Breuil--M\'ezard conjecture for $\mathrm{GL}_{3}$.

In the paper~\cite{Park}, which is the first part of the project, all the $3$-dimensional semi-stable $E$-representations of $G_{\QP}$ with regular Hodge--Tate weights have been classified by determining the admissible filtered $(\phi,N)$-modules of Hodge type $(0,r,s)$ for $0<r<s$. There are 49 families of admissible filtered $(\phi,N)$-modules of dimension $3$. Among them, there are $26$ families with $N=0$ (i.e., the crystalline case), there are $20$ families with $\mathrm{rank} N=1$, and there are $3$ families with $\mathrm{rank}N=2$. However, if we restrict our attention to those families which contain representations that are irreducible and of Hodge type $(0,1,2)$, there are only $11$ families for $N=0$ and $7$ families for $\mathrm{rank} N=1$; there are none for $\mathrm{rank} N=2$.  Since we are concerned only with the case of absolutely irreducible residual representations and since the crystalline deformation rings are already determined by a result of Clozel--Harris--Taylor~\cite{CHT}, the $7$ families of $\mathrm{rank}N=1$
$$D_{\rnk N=1}^{4},D_{\rnk N=1}^{6},D_{\rnk N=1}^{10},D_{\rnk N=1}^{12},D_{\rnk N=1}^{17},D_{\rnk N=1}^{18},D_{\rnk N=1}^{20}$$ (defined in~\cite{Park}) are the ones we will consider in this paper.

Finding a strongly divisible module of a given admissible filtered $(\vphi,N)$-module is in general very subtle and difficult even when Hodge--Tate weights are small.  An iterative process for the construction of strongly divisible modules is given in~\cite{Breuil99}, but it is rather elaborate to execute in practice (and much more so in dimension~$3$ than in dimension~$2$). Some of the families listed above can be expected to be more difficult than others, and exhibit new features that do not occur in the $\mathrm{GL}_2$-setting.  For instance, there are families with two $\mfl$-invariants in the filtration, and our construction will produce strongly divisible modules that have coefficients defined as limits of sequences in $E$, which depend on the values of the parameters in the families of admissible filtered $(\phi,N)$-modules.

This paper is organized as follows. In the reminder of the introduction, we give a brief review of $p$-adic Hodge theory (filtered $(\phi,N)$-modules, strongly divisible modules, and Breuil modules) and introduce notation that will be used throughout the paper.  In Section~\ref{Toy Breuil modules}, we study some examples of Breuil modules of weight~$2$ which occur as mod $p$ reductions of semi-stable representations of $G_{\QP}$ with Hodge--Tate weights $(0,1,2)$. In Section \ref{sec-adm filt mod}, we glue the seven families of admissible filtered $(\phi,N)$-modules of rank$N=1$ together so that, as a consequence, there are two families $D_{[0,\frac{1}{2}]}$ and $D_{[\frac{1}{2},1]}$ that parameterize all the irreducible semi-stable and non-crystalline representations of $G_{\QP}$ with Hodge--Tate weights $(0,1,2)$. In Section~\ref{sec-strong div mod0}, we construct strongly divisible modules of $D_{[0,\frac{1}{2}]}$. We first divide the area in which the parameters of $D_{[0,\frac{1}{2}]}$ are defined into three pieces and then construct strongly divisible modules for each case. We do similar things for $D_{[\frac{1}{2},1]}$ in Section~\ref{sec-strong div mod1}. In Section~\ref{sec-Breuil mod}, we compute the Breuil modules corresponding to the mod $p$ reductions of the strongly divisible modules constructed in Sections~\ref{sec-strong div mod0} and \ref{sec-strong div mod1}, and use these Breuil modules together with the results in Section~\ref{Toy Breuil modules} to determine which admissible filtered $(\phi,N)$-modules correspond to the representations whose mod $p$ reductions are absolutely irreducible.

\subsection{Review of $p$-adic Hodge theory}
In this subsection, we quickly review filtered $(\phi,N)$-modules, strongly divisible modules, and Breuil modules. Let $K$ and $E$ be finite extensions of $\QP$ inside $\QPB$ and $K_{0}$ the maximal absolutely unramified subextension of $K$. We also let $k$ be the residue field of $K$ and $v_{p}$ be a valuation on $\QPB$ with $\val{p}=1$. We write $G_{K} = \mathrm{Gal}(\QPB/K)$ for the absolute Galois group of $K$.
\subsubsection{Filtered $(\phi,N)$-modules}
To discuss semi-stable representations, we need to understand the semi-stable period ring $\Bst$. Due to the size of the paper, we only summarize some properties of $\Bst$ below, instead of constructing the ring. (See~\cite{Fontaine} for details.) The ring $\Bst$ is a subring of $\Bdr$ (though not canonically so) and contains the maximal unramified extension $\QP^{ur}$ of $\QP$ as well as the element $\ttt\in\Bdr$. Furthermore,
\begin{itemize}
\item the action of $G_{K}$ on $\Bdr$ restricts to a continuous action on $\Bst$ with $\Bst^{G_{K}}=K_{0}$;
\item there is an injective map $\phi:\Bst\rightarrow\Bst$ that commutes with the action of $G_{K}$, is Frobenius-semilinear on $\QP^{ur}$, and satisfies $\phi(\ttt)=p\ttt$;
\item there is an $\QP^{ur}$-linear map $N:\Bst\rightarrow\Bst$ that commutes with the action of $G_{K}$ and satisfies the relation $N\phi=p\phi N$.
\end{itemize}
We let $\mathbf{B}_{\mathrm{st},K}:=K\otimes_{K_{0}}\Bst$ and endow $\mathbf{B}_{\mathrm{st},K}$ with the filtration $\fil{i}\mathbf{B}_{\mathrm{st},K}:=(K\otimes_{K_{0}}\Bst)\cap\fil{i}\Bdr$. Note that the filtration on $\mathbf{B}_{\mathrm{st},K}$ depends on the embedding $\mathbf{B}_{\mathrm{st},K}\hookrightarrow \Bdr$, which depends on the choice of a uniformizer.

A \emph{filtered $(\phi,N)$-module} (strictly speaking, a \emph{filtered $(\phi,N,K,E)$-module}) is a free $K_{0}\otimes_{\QP}E$-module $D$ of finite rank together with a triple $(\phi,N,\{\fil{i}D_{K}\}_{i\in\ZZZ})$ where
\begin{itemize}
\item the \emph{Frobenius map} $\phi:D\rightarrow D$ is a Frobenius-semilinear and $E$-linear automorphism;
\item the \emph{monodromy operator} $N:D\rightarrow D$ is a (nilpotent) $K_{0}\otimes_{\QP}E$-linear endomorphism such that $N\phi=p\phi N$;
\item the \emph{Hodge filtration} $\{\fil{i}D_{K}\}_{i\in\ZZZ}$ is a decreasing filtration on $D_{K}:=K\otimes_{K_{0}}D$ such that a $K\otimes_{\QP}E$-submodule $\fil{i}D_{K}$ is
$D_{K}$ if $i\ll 0$ and $0$ if $i\gg 0$.
\end{itemize}
The morphisms of filtered $(\phi,N)$-modules are $K_{0}\otimes_{\QP} E$-module homomorphisms that commute with $\phi$ and $N$ and that preserve the filtration.

If $D$ is a filtered $(\phi,N)$-module of dimension $n$ as a $K_{0}$-vector space, then we endow $\otimes^{n}_{K_{0}}D$ with the structure of a filtered $(\phi,N)$-module by setting
\begin{itemize}
\item $\phi:=\otimes^{n}\phi$;
\item $N:=\sum_{i=1}^{n}N_{i,1}\otimes N_{i,2}\otimes ...\otimes N_{i,n}$, where $N_{i,j}:=
                 \left\{
                   \begin{array}{ll}
                     N & \hbox{if $i=j$,} \\
                     1 & \hbox{if $i\not=j$;}
                   \end{array}
                 \right.$
\item $\fil{i}(K\otimes_{K_{0}}(\otimes^{n}_{K_{0}}D)):=\underset{i_{1}+i_{2}+\cdots
    + i_{n}=i}\sum \fil{i_{1}}D_{K}\otimes_{K}...\otimes_{K}\fil{i_{n}}D_{K}$.
\end{itemize}
Taking the image structure on $\wedge_{K_{0}}^{n}D$, we endow $\wedge_{K_{0}}^{n}D$ with a filtered $(\phi,N)$-module structure as well. Since $\dim_{K_{0}} \wedge_{K_{0}}^{n}D=1$, we define
\begin{itemize}
\item $\hodgen(D):=\mathrm{max}\{i\in\ZZZ\,|\,\fil{i}(K\otimes_{K_{0}}(\wedge_{K_{0}}^{n}D))\not=0\}$;
\item $\newtonn(D):=\val{\phi(x)/x}$ for a nonzero element $x$ in $\wedge_{K_{0}}^{n}D$.
\end{itemize}

A \emph{filtered $(\phi,N)$-submodule} is a free $(K_{0}\otimes_{\QP}E)$-submodule $D'$ of a filtered $(\phi,N)$-module $D$ that is $\phi$-stable and $N$-stable, in which case $D'$ has a Frobenius map $\phi|_{D'}$, a monodromy operator $N|_{D'}$, and the filtration $\fil{i}D'_{K}=\fil{i}D_{K}\cap D'_{K}$.  A filtered $(\phi,N)$-module $D$ is said to be \emph{admissible} if
\begin{itemize}
\item  $\hodgen(D)=\newtonn(D)$;
\item  $\hodgen(D')\leq\newtonn(D')$ for each filtered $(\phi,N)$-submodule $D'$ of $D$.
\end{itemize}

The \emph{Hodge--Tate weights} (or \emph{Hodge type}) of a filtered $(\phi,N)$-module $D$ are the integers $r$ such that $\fil{r}D_K \neq \fil{r+1}D_K$, each counted with multiplicity $\dim_{E}(\fil{r}D_{K}/\fil{r+1}D_{K}).$  If the rank of $D$ over $K_0 \otimes_{\QP} E$ is $d$, then there are precisely $d \cdot [K:\QP]$ Hodge--Tate weights, with multiplicity.  When a filtered $(\phi,N)$-module $D$ of rank $d$ has $d$ distinct Hodge--Tate weights, we say that $D$ is \emph{regular} (or that it has regular Hodge--Tate weights). We say that a filtered $(\phi,N)$-module is \emph{positive} if the lowest Hodge--Tate weight is greater than or equal to $0$.

Fix a uniformizer, thereby fixing the inclusion
$\mathbf{B}_{\mathrm{st},K}\hookrightarrow \Bdr.$  Let $V$ be a finite-dimensional $E$-vector space equipped with continuous action of $G_{K}$, and define $$\Dst(V):=(\Bst\otimes_{\QP}V)^{G_{K}}.$$  Then $\dim_{K_{0}}\Dst(V)\leq\dim_{\QP}V$.  If the equality holds,
then we say that $V$ is \emph{semi-stable}; in that case $\Dst(V)$ inherits from $\Bst$ the structure of an admissible filtered $(\phi,N)$-module. In particular, $\Dst(V)$ is a free $K_{0}\otimes_{\QP}E$-module of rank $\dim_{E}V$. More precisely,
\begin{itemize}
\item $\phi:=\phi\otimes \mathrm{Id}:\Dst(V)\rightarrow\Dst(V)$;
\item $N:=N\otimes \mathrm{Id}:\Dst(V)\rightarrow\Dst(V)$;
\item $\fil{i}\Dst(V)_{K}:=(\fil{i}\mathbf{B}_{\mathrm{st},K}\otimes_{\QP}V)^{G_{K}}$.
\end{itemize}
We say that $V$ is \emph{crystalline} if $V$ is semi-stable and the monodromy operator $N$ on $\Dst(V)$ is~$0$. Following Colmez and Fontaine~\cite{CF}, the functor $\Dst$ provides an equivalence between the category of semi-stable $E$-representations of $G_{K}$ and the category of admissible filtered $(\phi,N,K,E)$-modules. The functor $\Dst$ does depend on the choice of a uniformizer $\pi$ in $K$, but the $(\phi,N)$-module $\Dst(V)$ (forgetting the filtration) does not depend on $\pi$. $\Dst$ restricted to the category of crystalline representations does not depend on $\pi$ either.

If $V$ is semi-stable, then when we refer to the Hodge--Tate weights or the
Hodge type of $V$, we mean those of $\Dst(V)$.  Our normalizations imply that the cyclotomic character $\varepsilon : G_{\QP} \to E^{\times}$ has Hodge--Tate weight $-1$.  Twisting $V$ by a power $\varepsilon^n$ of the cyclotomic character has the effect of shifting all the Hodge--Tate weights of $V$ by $-n$; after a suitable twist, we are therefore free to assume that the lowest Hodge--Tate weight is $0$.

If $V$ is a finite dimensional vector space over $E$ equipped with a continuous action of $G_{K}$, we let $V^*$ be the dual representation of $G_{K}$. $V$ is semi-stable (resp., crystalline) if and only if so is $V^*$. If we denote $\mathrm{D_{st}^{*}}(V):=\Dst(V^*)$, then the functor $\mathrm{D_{st}^{*}}$ gives rise to an anti-equivalence between the category of semi-stable $E$-representations of $G_{K}$ and the category of admissible filtered $(\phi,N,K,E)$-modules. The quasi-inverse to $\mathrm{D^{*}_{st}}$ is given by $$\mathrm{V^{*}_{st}}(D):=\Hom_{\phi,N}(D,\Bst)\cap\Hom_{\fil{*}}(D_{K}, K\otimes_{K_{0}}\Bst).$$

\subsubsection{Strongly divisible modules}
We fix a uniformizer $\pi$ in $K$ and let $W(k)$ be the ring of Witt vectors over $k$ so that $K_{0}=W(k)[\frac{1}{p}]$. Let $E(u)\in W(k)[u]$ be the minimal polynomial of $\pi$ over $K_{0}$ and let $S$ be the $p$-adic completion of $W(k)[u,\frac{u^{ie}}{i!}]_{i\in\NNN}$, where $e$ is the absolute ramification index of $K$. We endow $S$ with the following structure:
\begin{itemize}
\item a continuous Frobenius-semilinear map $\varphi:S\rightarrow S$ with $\varphi(u)=u^{p}$;
\item a continuous $W(k)$-linear derivation $N:S\rightarrow S$ with $N(u)=-u$ and $N(u^{ie}/i!)=-ieu^{ie}/i!$;
\item a decreasing filtration $\{\fil{i}S\}_{i\in\NNN_{0}}$ where $\fil{i}S$ is the $p$-adic completion of $\sum_{j\geq i}\frac{E(u)^{j}}{j!}S$.
\end{itemize}
Note that $N\varphi=p\varphi N$ and $\varphi(\fil{i}S)\subset p^{i}S$ for $0\leq i\leq p-1$.

Let $E$ be a finite extension of $\QP$ with ring of integers $\roi$. We also let $S_{\roi}:=S\otimes_{\ZP}\roi$ and $S_{E}:=S_{\roi}\otimes_{\ZP}\QP$, and extend the definitions of $\fil{}$, $\varphi$, and $N$ to $S_{\roi}$ and $S_{E}$ by $\roi$-linearly and $E$-linearly, respectively. Let $\mathcal{MF}(\varphi,N,K,E)$ be the category whose objects are finite free $S_{E}$-modules $\mfd$ with
\begin{itemize}
\item a $\varphi$-semilinear and $E$-linear morphism $\varphi:\mfd\rightarrow\mfd$ such that the determinant of $\varphi$ with respect to some choice of $S_{\QP}$-basis is invertible in $S_{\QP}$ (which does not depend on the choice of basis);
\item a decreasing filtration of $\mfd$ by $S_{E}$-submodules $\fil{i}\mfd$, $i\in\ZZZ$, with $\fil{i}\mfd=\mfd$ for $i\leq 0$ and $\fil{i}S_{E}\cdot\fil{j}\mfd\subset\fil{i+j}\mfd
    $ for all $j$ and all $i\geq 0$;
\item a $K_{0}\otimes E$-linear map $N:\mfd
\rightarrow \mfd
$ such that
    \begin{itemize}
    \item $N(sx)=N(s)x+sN(x)$ for all $s\in S_{E}$ and $x\in\mfd$,
    \item $N\varphi=p\varphi N$,
    \item $N(\fil{i}\mfd)\subset\fil{i-1}\mfd$ for all $i$.
    \end{itemize}
\end{itemize}

For a filtered $(\phi,N)$-module $D$ with positive Hodge--Tate weights, one can associate an object $\mfd\in\mathcal{MF}(\varphi,N,K,E)$ by the following:
\begin{itemize}
\item $\mfd:=S\otimes_{W(k)}D$;
\item $\varphi:=\vphi\otimes\phi:\mfd\rightarrow\mfd$;
\item $N:=N\otimes\mathrm{Id}+\mathrm{Id}\otimes N:\mfd\rightarrow\mfd$;
\item $\fil{0}\mfd:=\mfd$ and, by induction, $$\fil{i+1}\mfd:=\{x\in\mfd|N(x)\in\fil{i}\mfd\mbox{ and }f_{\pi}(x)\in\fil{i+1}D_{K}\}$$ where $f_{\pi}:\mfd\twoheadrightarrow D_{K}$ is defined by $s(u)\otimes x\mapsto s(\pi)x$.
\end{itemize}
The functor $\mfd:D\mapsto S\otimes_{W(k)}D$ gives rise to an equivalence between the category of positive filtered $(\phi,N)$-modules and $\mathcal{MF}(\vphi,N,K,E)$, by a result of Breuil~\cite{Breuil97}.

Fix a positive integer $r\leq p-2$. The category $\mathfrak{MD}^{r}_{\roi}$ of \emph{strongly divisible modules of weight $r$} is defined to be the category of free $S_{\roi}$-modules $\mfm$ of finite rank with an $S_{\roi}$-submodule $\fil{r}\mfm$ and additive maps $\vphi,N:\mfm\rightarrow\mfm$ such that the following properties hold:
\begin{itemize}
\item $\fil{r}S_{\roi}\cdot\mfm\subset\fil{r}\mfm$;
\item $\fil{r}\mfm\cap I\mfm=I\fil{r}\mfm$ for all ideals $I$ in $\roi$;
\item $\vphi(sx)=\vphi(s)\vphi(x)$ for all $s\in S_{\roi}$ and for all $x\in\mfm$;
\item $\vphi(\fil{r}\mfm)$ is contained in $p^{r}\mfm$ and generates it over $S_{\roi}$;
\item $N(sx)=N(s)x+sN(x)$ for all $s\in S_{\roi}$ and for all $x\in\mfm$;
\item $N\vphi=p\vphi N$;
\item $E(u) N(\fil{r}\mfm)\subset\fil{r}\mfm$.
\end{itemize}
The morphisms are $S_{\roi}$-linear maps that preserve $\filr$ and commute with $\vphi$ and $N$. For a strongly divisible module $\mfm$ of weight $r$, there exists a unique admissible filtered $(\phi,N)$-module $D$ with Hodge--Tate weights lying in $[0,r]$ such that $\mfm[\frac{1}{p}]\simeq S\otimes_{W(k)}D$, so one has the following equivalent definition: let $D$ be an admissible filtered $(\phi,N)$-module such that $\fil{0}D_{K}=D_{K}$ and $\fil{r+1}D_{K}=0$. \emph{A strongly divisible module} in $\mfd:=\mfd(D)$ is an $S_{\roi}$-submodule $\mfm$ of $\mfd$ such that
\begin{itemize}
\item $\mfm$ is a free $S_{\roi}$-module of finite rank such that $\mfm[\frac{1}{p}]\simeq\mfd$;
\item $\mfm$ is stable under $\vphi$ and $N$;
\item $\vphi(\filr\mfm)\subset p^{r}\mfm$ where $\filr\mfm:=\mfm\cap\filr\mfd$.
\end{itemize}

For a strongly divisible module $\mfm$, we define an $\roi[G_{K}]$-module $\mathrm{T^{*}_{st}}(\mfm)$ as follows: the ring $\widehat{\mathbf{A}}_{\mathrm{st}}$, defined in~\cite{Breuil99}, is an $S$-algebra with a filtration $\fil{i}\widehat{\mathbf{A}}_{\mathrm{st}}$, a Frobenius map $\vphi$, and a monodromy operator $N$. Moreover, $\widehat{\mathbf{A}}_{\mathrm{st}}$ has a natural action of $G_{K}$. We put $$\mathrm{T^{*}_{st}}(\mfm):=\Hom_{S,\filr,\vphi,N}(\mfm,\widehat{\mathbf{A}}_{\mathrm{st}})$$ (that is, the homomorphisms of $S$-modules which preserve $\filr$ and commute with $\vphi$ and $N$). $\mathrm{T^{*}_{st}}(\mfm)$ inherits an $\roi$-module structure from the $\roi$-module structure on $\mfm$ and an action of $G_{K}$ from the action of $G_{K}$ on $\widehat{\mathbf{A}}_{\mathrm{st}}$. The functor $\mathrm{T^{*}_{st}}$ provides an anti-equivalence of categories between the category $\mathfrak{MD}^{r}_{\roi}$ of strongly divisible modules of weight $r$ and the category of $G_{K}$-stable $\roi$-lattices in semi-stable $E$-representations of $G_{K}$ with Hodge--Tate weights lying in $[-r,0]$, provided that $0\leq r\leq p-2$. Moreover, there is a compatibility: if $\mfm$ is a strongly divisible module in $\mfd:=\mfd(D)$ for an admissible filtered $(\phi,N)$-module $D$, then $\mathrm{T^{*}_{st}}(\mfm)$ is a Galois stable $\roi$-lattice in $\mathrm{V^{*}_{st}}(D)$. This was conjectured by Breuil and proved by Liu~\cite{Liu} in the case $E=\QP$; Emerton--Gee--Herzig~\cite{EGH} gave the (essentially formal) generalization to the case of $E$-coefficients.

\subsubsection{Breuil modules}
Let $\FFF$ be a finite extension of $\FP$, $k$ an algebraic extension of $\FP$, and $e\in\NNN$. The category $\mathrm{BrMod}^{r}_{\FFF}$ of \emph{Breuil modules of weight $r$} consists of quadruples $(\mcm,\mcm_{r},\vphi_{r},N)$ where
\begin{itemize}
\item $\mcm$ is a finitely generated $(k\otimes_{\FP}\FFF)[u]/u^{ep}$-module, free over $k[u]/u^{ep}$, (which implies that $\mcm$ is in fact a free $(k\otimes_{\FP}\FFF)[u]/u^{ep}$-module of finite rank);
\item $\mcm_{r}$ is a $(k\otimes_{\FP}\FFF)[u]/u^{ep}$-submodule of $\mcm$ containing $u^{er}\mcm$;
\item $\vphi_{r}:\mcm_{r}\rightarrow\mcm$ is $\FFF$-linear and $\vphi$-semilinear (where $\vphi:k[u]/u^{ep}\rightarrow k[u]/u^{ep}$ is the $p$-th power map) with image generating $\mcm$ as $(k\otimes_{\FP}\FFF)[u]/u^{ep}$-module;
\item $N:\mcm\rightarrow\mcm$ is $k\otimes_{\FP}\FFF$-linear and satisfies
     \begin{itemize}
     \item $N(ux)=uN(x)-ux$ for all $x\in\mcm$,
     \item $u^{e}N(\mcm_{r})\subset \mcm_{r}$, and
     \item $\vphi_{r}(u^{e}N(x))=cN(\vphi_{r}(x))$ for all $x\in\mcm_{r}$, where $c\in (k[u]/u^{ep})^{\times}$ is the image of $\frac{1}{p}\vphi(E(u))$ under the natural map $S\rightarrow k[u]/u^{ep}$.
     \end{itemize}
\end{itemize}
The morphisms are $(k\otimes_{\FP}\FFF)[u]/u^{ep}$-module homomorphisms that preserve $\mcm_{r}$ and commute with $\vphi_{r}$ and $N$.

Suppose that $k$ (resp., $\FFF$) is the residue field of $K$ (resp., of $E$). We also assume $e=[K:K_{0}]$. If $\mfm$ is an object of $\mathfrak{MD}^{r}_{\roi}$, then \begin{equation}\label{BrMod-fuctor}
\mcm:=(\mfm/\maxi{E}\mfm)\otimes_{S_{\roi}/\maxi{E}S_{\roi}}(k\otimes_{\FP}\FFF)[u]/u^{ep}
\end{equation}
is naturally an object of $\mathrm{BrMod}^{r}_{\FFF}$, where
\begin{itemize}
\item $\mcm_{r}$ is the image of $\filr\mfm$ in $\mcm$;
\item the map $\vphi_{r}$ is induced by $\frac{1}{p^{r}}\vphi|_{\filr\mfm}$;
\item $N$ is induced by the one on $\mfm$.
\end{itemize}
Note that this association gives rise to a functor from the category $\mathfrak{MD}^{r}_{\roi}$ to $\mathrm{BrMod}^{r}_{\FFF}$.

We define a functor $\mathrm{T}^{*}$ from the category $\mathrm{BrMod}^{r}_{\FFF}$ to the category of finite-dimensional $\FFF$-representations of $G_{K}$ as follows: the ring $\widehat{\mathbf{A}}$, defined in~\cite{EGH}, is a $k[u]/u^{ep}$-algebra with a filtration $\filr$, maps $\vphi_{r}$ and $N$, and an action of $G_{K}$. For $\mcm\in\mathrm{BrMod}^{r}_{\FFF}$, we define $$\mathrm{T}^{*}(\mcm):=\Hom_{k[u]/u^{ep},\filr,\vphi_{r},N}(\mcm,\widehat{\mathbf{A}}).$$  This inherits on $\mathrm{T^{*}}(\mcm)$ an action of $G_{K}$ from the action of $G_{K}$ on $\widehat{\mathbf{A}}$. The functor $\mathrm{T}$ is faithful with $\dim_{\FFF}\mathrm{T}^{*}(\mcm)=\mathrm{rank}_{(k\otimes_{\FP}\FFF)[u]/u^{ep}}\mcm$. Moreover, there is a compatibility, that is, if $\mfm\in\mathfrak{MD}^{r}_{\roi}$ and $\mcm:=(\mfm/\maxi{E}\mfm)\otimes_{S_{\roi}/\maxi{E}S_{\roi}}(k\otimes_{\FP}\FFF)[u]/u^{ep}$ denotes the Breuil module corresponding to the reduction of $\mfm$, then $\mathrm{T^{*}_{st}}(\mfm)\otimes_{\roi}\FFF\simeq\mathrm{T}^{*}(\mcm)$.

We say a morphism of Breuil modules $f:\mcm\rightarrow\mcm'$ is a \emph{quotient map} if $f(\mcm_{r})=\mcm_{r}'$. If $f:\mcm\twoheadrightarrow\mcm'$ is a quotient map of Breuil modules, then it is clear that $\mathrm{T}^{*}(f):\mathrm{T}^{*}(\mcm')\hookrightarrow\mathrm{T}^{*}(\mcm)$, i.e., $\mathrm{T}^{*}(\mcm')$ is a subrepresentation of $\mathrm{T}^{*}(\mcm)$.
Moreover, the converse is also true, due to the Proposition~3.2.6 in~\cite{EGH}: if $\mcm\in\mathrm{BrMod}^{r}_{\FFF}$ and if $T'$ is a subrepresentation of $\mathrm{T}^{*}(\mcm)$, then there is a unique quotient map $f:\mcm\twoheadrightarrow\mcm'$ in $\mathrm{BrMod}^{r}_{\FFF}$ such that $\mathrm{T}^{*}(f)$ is identified with the inclusion $T'\hookrightarrow \mathrm{T}^{*}(\mcm)$.

\subsection{Notation}
We let $S$ be the $p$-adic completion of $\ZP[\frac{u^{i}}{i!}]_{i\in\NNN}$ since we are concerned only with representations of $G_{\QP}$ in this paper, and we fix a prime number $p$ to fix an embedding $\Bst\hookrightarrow\Bdr$, and we let $E(u):=u-p\in S$. We assume that $p>3$ since we are concerned with strongly divisible modules of weight $2$. We let $E$ be a finite extension of $\QP$ with ring of integers $\roi$, maximal ideal $\maxi{E}$, and residue field $\FFF$: the field $E$ is the coefficients of our semi-stable representations, $S_{\roi}:=S\otimes \roi$ is the coefficient of our strongly divisible modules, and $\mcf:=\FFF[u]/u^{p}$ is the coefficient of our Breuil modules. We also let $S_{E}:=S_{\roi}\otimes_{\ZP}\QP$. We write $\overline{a}\in\FFF$ for the image of $a\in\roi$ under the fixed quotient map $\roi\rightarrow\FFF$, and let $v_{p}$ be the valuation on $\QPB$ with $\val{p}=1$. For $a,b,c\in E$, we denote $a\equiv b$ modulo $(c)$ if $\val{a-b}\geq\val{c}$ and $a\sequiv b$ modulo $(c)$ if $\val{a-b}>\val{c}$. We let $\gamma:=\frac{(u-p)^{p}}{p}\in S$. It is easy to check that $\vphi(\gamma)\in p^{p-1}S$ and $\vphi(\frac{u-p}{p})=\frac{u^{p}-p}{p}\equiv\gamma-1$ modulo $pS$. It is also straightforward to check $N(\gamma)=-p[\gamma+(u-p)^{p-1}]$.

In this paper, there are eight sequences denoted either by $G_{m}$ or by $H_{m}$, but they are all different. It will be clear from the context which sequences are being used.

\section{Examples of Breuil modules of weight $2$}\label{Toy Breuil modules}
In this section, we provide some examples of Breuil modules which occur as mod $p$ reductions of semi-stable representations of $G_{\QP}$ with Hodge-Tate weights $(0,1,2)$.

\subsection{Simple Breuil modules}
The Breuil modules introduced in this subsection correspond to absolutely irreducible mod $p$ representations of $G_{\QP}$. We prove it by showing the restriction of the corresponding representations to the inertia subgroup $I_{\QP}$ of $G_{\QP}$ is of niveau $3$.
\begin{exam} \label{simple Breuil module exam}
Let $s:=(1,2,3)$ be a cycle of length $3$ in the symmetric group $S_{3}$. For $i=1,2$ and for $a,b,c$ in $\FFF^{\times}$, the Breuil module $\mathcal{M}(s^{i},a,b,c)$ is defined as follows:
     \begin{itemize}
     \item $\mcm:=\mcf(\Baseo,\Baset,\Baseth)$;
     \item $\mcm_{2}:=\mcf(u^{2}\Baseo,u\Baset,\Baseth)$;
     \item $\vphi_{2}:\mcm_{2}\rightarrow \mcm$ is induced by
     $\left\{
       \begin{array}{ll}
         u^{2}\Baseo\mapsto a E_{s^{i}(1)}  & \hbox{} \\
         u\Baset\mapsto bE_{s^{i}(2)} & \hbox{} \\
         \Baseth\mapsto cE_{s^{i}(3)}; & \hbox{}
       \end{array}
     \right.$
     \item $N:\mcm\rightarrow\mcm$ is induced by $N(\Baseo)=N(\Baset)=N(\Baseth)=0$.
     \end{itemize}
\end{exam}

\begin{lemm}\label{simple Breuil module lemma1}
$\mcm(s^{i},a,b,c)$ is isomorphic to $\mcm(s^{j},\alpha,\beta,\gamma)$ if and only if $i=j$ and $abc=\alpha\beta\gamma$.
\end{lemm}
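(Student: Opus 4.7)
The plan is to exploit the interplay between $\mcf$-linearity of Breuil module morphisms and the precise Frobenius structure on $\mcf = \FFF[u]/u^{p}$: the map $\vphi_{2}$ is $\FFF$-linear and $\vphi$-semilinear in $u$, and crucially $u^{p}=0$ in $\mcf$. Consequently, for any prescribed generator $y$ of $\mcm_{2}'$ (one of $u^{2}\Baseo'$, $u\Baset'$, $\Baseth'$) and any $g(u)\in\mcf$, one has $\vphi_{2}'(g(u)y)=g(u^{p})\vphi_{2}'(y)=g(0)\vphi_{2}'(y)$. Thus $\vphi_{2}$-compatibility sees only the constant terms of the matrix entries of any morphism.

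Now let $f:\mcm(s^{i},a,b,c)\to\mcm(s^{j},\alpha,\beta,\gamma)$ be a morphism, written as a $3\times 3$ matrix with entries in $\mcf$. Applying $\vphi_{2}$-compatibility to each of the three generators $u^{2}\Baseo$, $u\Baset$, $\Baseth$ of $\mcm_{2}$, the left-hand side $\vphi_{2}'(f(\cdot))$ collapses by the observation above to a scalar multiple of a single basis vector of $\mcm'$ (determined by $j$), while the right-hand side $f(\vphi_{2}(\cdot))$ is $a$, $b$, or $c$ times $f$ applied to a basis vector of $\mcm$ (determined by $i$). Matching coordinates on both sides yields: (i) when $i=j$, the matrix of $f$ is diagonal with scalar entries $\lambda_{1},\lambda_{2},\lambda_{3}\in\FFF$ satisfying
\[
a\lambda_{2}=\alpha\lambda_{1},\quad b\lambda_{3}=\beta\lambda_{2},\quad c\lambda_{1}=\gamma\lambda_{3},
\]
and multiplying the three gives $(abc-\alpha\beta\gamma)\lambda_{1}\lambda_{2}\lambda_{3}=0$, so bijectivity of $f$ forces $abc=\alpha\beta\gamma$; (ii) when $i\neq j$, say $(i,j)=(1,2)$, the mismatched cyclic structures force $f(\Baset)$ to be a scalar multiple of $\Baseth'$, and then $\vphi_{2}$-compatibility on $u\Baset$ gives $bf(\Baseth)=\vphi_{2}'(u\cdot\mathrm{const}\cdot\Baseth')=u^{p}\gamma\Baset'=0$, so $f(\Baseth)=0$, contradicting bijectivity. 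The case $(i,j)=(2,1)$ is symmetric.

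For the converse, given $i=j$ and $abc=\alpha\beta\gamma$, the diagonal scalar map with entries $\lambda_{1}=1$, $\lambda_{2}=\alpha/a$, $\lambda_{3}=\alpha\beta/(ab)$ satisfies the first two relations by construction, and the third relation $c\lambda_{1}=\gamma\lambda_{3}$ reduces exactly to $abc=\alpha\beta\gamma$; the $N$-compatibility is trivial since $N=0$ on both Breuil modules. The main technical point is recognizing how the combination of $\FFF$-linearity, $\vphi$-semilinearity in $u$, and $u^{p}=0$ reduces the entire content of $\vphi_{2}$-compatibility to three scalar equations over $\FFF^{\times}$; once this step is understood, the remaining arithmetic is immediate.
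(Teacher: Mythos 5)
Your proof is correct and follows essentially the same route as the paper: use $\vphi_{2}$-compatibility to force the morphism to be diagonal (or zero when $i\neq j$), extract the three scalar relations, and multiply them to get $abc=\alpha\beta\gamma$; the converse is the same explicit diagonal isomorphism, and $N$-compatibility is trivial. Your observation that $\vphi$-semilinearity together with $u^{p}=0$ in $\mcf$ is what collapses the coefficients to their constant terms is a helpful elaboration of the step the paper dismisses as ``easy to check.''
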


\begin{proof}
It is easy to check that if $i\not=j$, then the only morphism between $\mcm(s^{i},a,b,c)$ and $\mcm(s^{j},\alpha,\beta,\gamma)$ is the trivial map from the commutativity with $\vphi_{2}$.
If $i=j$, then the commutativity with $\vphi_{2}$ also implies that the morphism is of the form $\Baseo\mapsto x\Baseo, \Baset\mapsto y\Baset, \Baseth\mapsto z\Baseth$ for $x,y,z\in\FFF$. If $i=j=1$, then, from the commutativity with $\vphi_{2}$ again, we have equations $\alpha x=ay, \beta y=bz, \gamma z=c x$, which implies $\alpha\beta\gamma=abc$ if we assume that the morphism is an isomorphism. It is easy to check that the morphism commutes with $N$ since $N(\Baseo)=N(\Baset)=N(\Baseth)=0$ and $x,y,z\in \FFF^{\times}$. Similarly, one can also get the same result when $i=j=2$.

Conversely, assume that $abc=\alpha\beta\gamma$. If $i=j=1$, then the association $\Baseo\mapsto\Baseo, \Baset\mapsto\frac{\alpha}{a}\Baset, \Baseth\mapsto\frac{\alpha\beta}{ab}\Baseth$ gives rise to an isomorphism from $\mcm(s^{i},a,b,c)$ to $\mcm(s^{j},\alpha,\beta,\gamma)$, and if $i=j=2$, then the association $\Baseo\mapsto\Baseo, \Baset\mapsto\frac{b}{\beta}\Baset, \Baseth\mapsto\frac{bc}{\beta\gamma}\Baseth$ does so.
\end{proof}

We use Theorem 5.2.2 in~\cite{Caruso} to prove that the Breuil modules in Example~\ref{simple Breuil module exam} correspond to absolutely irreducible mod $p$ representations of $G_{\QP}$. To use the theorem, we need a little preparation. Let $n=[\FFF:\FP]$ and $\sigma$ be the absolute arithmetic Frobenius on $\FFF$, so that the Galois group $\mathrm{Gal}(\FFF/\FP)$ consists of $\sigma^{i}$ for $i=1,2,...,n$. The association $k\otimes e\mapsto(k\cdot\sigma^{i}(e))_{i}$ gives rise to an isomorphism $\FPB\otimes_{\FP}\FFF\underset{\sim}\longrightarrow\oplus_{\sigma^{i}:\FFF\hookrightarrow \FPB}\FPB$ as $i$ ranges over the integers from $1$ to $n$. Note that $\vphi_{r}$ acts on $\FPB$ Frobenius-semilinearly and on $\FFF$ linearly for the Breuil modules over $(\FPB\otimes_{\FP}\FFF)[u]/u^{p}$.

\begin{equation*}
\xymatrix{
\underset{\underset{\vphi_{r}}\circlearrowleft}{(\FPB\otimes_{\FP}\FFF)[u]/u^{p}}&\underset{\underset{\vphi_{r}}\circlearrowleft}{\FPB\otimes_{\FP}\FFF}\ar[r]_{\sim}\ar@{_{(}->}[l] &\underset{\underset{\vphi_{r}}\circlearrowleft}{\oplus_{\sigma^{i}:\FFF\hookrightarrow \FPB}\FPB}}
\end{equation*}
We first investigate the action of $\vphi_{r}$ on $\oplus_{\sigma^{i}:\FFF\hookrightarrow \FPB}\FPB$ under the isomorphism above.

\begin{lemm} \label{simple Breuil module lemma2}
If $(x_{i})_{i}\in\oplus_{\sigma^{i}:\FFF\hookrightarrow \FPB}\FPB$, then $\vphi_{r}((x_{i})_{i})=(x_{i-1}^{p})_{i}$.
\end{lemm}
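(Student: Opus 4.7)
The plan is to verify the formula directly on pure tensors and then extend by additivity, using that in characteristic $p$ the $p$-th power map is additive and that $\sigma$ is literally the absolute Frobenius $x \mapsto x^p$ on $\FFF$.

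First I would trace a pure tensor $k \otimes e \in \FPB \otimes_{\FP} \FFF$ through the isomorphism described in the paragraph preceding the lemma: its image is the tuple $(x_i)_i$ with $x_i = k \cdot \sigma^i(e)$. Since $\varphi_r$ is specified to act Frobenius-semilinearly on the $\FPB$-factor and linearly on the $\FFF$-factor, I have $\varphi_r(k \otimes e) = k^p \otimes e$ on the left, which under the isomorphism becomes the tuple $\bigl(k^p \cdot \sigma^i(e)\bigr)_i$. This is the quantity that the claimed formula must reproduce on the right-hand side.

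Next I compute the candidate formula applied to $(x_i)_i = (k \cdot \sigma^i(e))_i$. One gets $x_{i-1}^p = \bigl(k \cdot \sigma^{i-1}(e)\bigr)^p = k^p \cdot \sigma^{i-1}(e)^p$, and because $\sigma$ is the $p$-th power map on $\FFF$, the factor $\sigma^{i-1}(e)^p$ equals $\sigma\bigl(\sigma^{i-1}(e)\bigr) = \sigma^i(e)$. Hence $(x_{i-1}^p)_i = (k^p \cdot \sigma^i(e))_i$, which matches exactly what we got from pushing $\varphi_r(k \otimes e)$ through the isomorphism. So the identity holds on all pure tensors.

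Finally I would extend to an arbitrary element $\sum_j k_j \otimes e_j$. Both sides are additive: $\varphi_r$ is additive by definition, and the operation $(x_i)_i \mapsto (x_{i-1}^p)_i$ is additive because the Frobenius $x \mapsto x^p$ is additive in characteristic $p$. Since pure tensors additively generate $\FPB \otimes_{\FP} \FFF$, the pure-tensor identity propagates to all elements, proving the lemma. There is no real obstacle here; the only thing to keep straight is the index bookkeeping — that applying the $p$-th power shifts the label by one via $\sigma^{i-1}(e)^p = \sigma^i(e)$ — which is precisely why the formula involves $x_{i-1}^p$ in the $i$-th slot rather than $x_i^p$.
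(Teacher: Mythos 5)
Your argument is correct and coincides with the paper's proof: both verify the identity on a pure tensor $k \otimes e$, use that $\varphi_r(k \otimes e) = k^p \otimes e$, and then observe $\sigma^{i-1}(e)^p = \sigma^i(e)$ to match the two sides, with the extension to general elements following from additivity of the $p$-th power map in characteristic $p$. You have simply spelled out more explicitly what the paper compresses into its final sentence.
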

\begin{proof}
If $x\otimes y$ be in $\FPB\otimes_{\FP}\FFF$, then $\vphi_{r}(x\otimes y)=x^{p}\otimes y$. Hence, we have $\vphi_{r}((x\sigma^{i}(y))_{i})=(x^{p}\sigma^{i}(y))_{i}=((x\sigma^{i-1}(y))^{p})_{i}$. The fact that field $\FPB$ has characteristic $p$ completes the proof.
\end{proof}

We let $\FPB\mcm(s^{i},a,b,c):=\FPB\otimes_{\FP}\mcm(s^{i},a,b,c)$ and extend $\vphi_{2}$ $\vphi$-semilinearly and $N$ linearly on $\FPB$. Then $\FPB\mcm(s^{i},a,b,c)$ is a Breuil module with $(\FPB\otimes_{\FP}\FFF)[u]/u^{p}$-coefficients.

\begin{lemm}\label{simple Breuil module lemma3}
For all $a,b,c\in\FFF^{\times}$, $\FPB\mcm(s^{i},a,b,c)$ is isomorphic to $\FPB\mcm(s^{i},1,1,1)$ if $[\FFF:\FP]=3m$.
\end{lemm}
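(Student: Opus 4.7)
The plan is to build an explicit diagonal isomorphism after decomposing both modules along the embeddings $\sigma^{j}:\FFF\hookrightarrow\FPB$. I will treat $i=1$ (the case $i=2$ being strictly parallel, with $s^{2}=(1,3,2)$ playing the role of $s$). Let $e_{j}\in\FPB\otimes_{\FP}\FFF$ denote the idempotent projecting onto the $j$-th summand of $\FPB\otimes_{\FP}\FFF\simeq\bigoplus_{j=1}^{n}\FPB$; Lemma~\ref{simple Breuil module lemma2}, together with multiplicativity of $\vphi$ on $\FPB\otimes_{\FP}\FFF$, gives $\vphi(e_{j})=e_{j+1}$ (indices mod $n$). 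Writing $E_{k}^{(j)}:=e_{j}\otimes E_{k}$ for the basis of $\FPB\mcm(s,a,b,c)$ and $F_{k}^{(j)}:=e_{j}\otimes F_{k}$ for the analogous basis of the target $\FPB\mcm(s,1,1,1)$, a direct computation yields the component form
\[
\vphi_{2}(u^{2}E_{1}^{(j)})=\sigma^{j+1}(a)\,E_{2}^{(j+1)},\quad
\vphi_{2}(uE_{2}^{(j)})=\sigma^{j+1}(b)\,E_{3}^{(j+1)},\quad
\vphi_{2}(E_{3}^{(j)})=\sigma^{j+1}(c)\,E_{1}^{(j+1)},
\]
together with the analogous identities (all scalars equal to $1$) on the target.

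I then look for $\phi$ of the diagonal form $E_{k}^{(j)}\mapsto \mu_{k}^{(j)}F_{k}^{(j)}$ with $\mu_{k}^{(j)}\in\FPB^{\times}$. Such a map is automatically $(\FPB\otimes\FFF)[u]/u^{p}$-linear, preserves $\mcm_{2}$, and commutes with $N=0$, so the only nontrivial condition is commutation with $\vphi_{2}$. Using $\vphi(\mu e_{j})=\mu^{p}e_{j+1}$ (for $\mu\in\FPB$), this yields the coupled recurrences
\[
\mu_{2}^{(j+1)}=\frac{(\mu_{1}^{(j)})^{p}}{\sigma^{j+1}(a)},\qquad
\mu_{3}^{(j+1)}=\frac{(\mu_{2}^{(j)})^{p}}{\sigma^{j+1}(b)},\qquad
\mu_{1}^{(j+1)}=\frac{(\mu_{3}^{(j)})^{p}}{\sigma^{j+1}(c)}.
\]
Composing these three relations collapses the system to a single recurrence $\mu_{1}^{(j+3)}=(\mu_{1}^{(j)})^{p^{3}}/\sigma^{j+3}(abc)$, and an induction on $k$ gives $\mu_{1}^{(j+3k)}=(\mu_{1}^{(j)})^{p^{3k}}/\sigma^{j+3k}(abc)^{k}$.

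Setting $k=m$ so that $3k=n$, and using $\sigma^{n}=\mathrm{id}$, the cyclicity requirement $\mu_{1}^{(j+n)}=\mu_{1}^{(j)}$ reduces to the single equation $(\mu_{1}^{(j)})^{p^{n}-1}=\sigma^{j}(abc)^{m}$. Since $\FPB$ is algebraically closed, this equation admits a solution in $\FPB^{\times}$ for each residue class $j\in\{1,2,3\}$; propagating via the recurrence then determines all $\mu_{k}^{(j)}$ consistently and defines the desired isomorphism $\phi$.

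The main subtlety lies in bookkeeping the Frobenius twists $\sigma^{j+3k}(abc)$ that accumulate at each iteration, and in verifying that they collapse to the clean power $\sigma^{j}(abc)^{m}$ after using $\sigma^{n}=\mathrm{id}$. The hypothesis $3\mid n$ is used precisely here: it guarantees that the composite three-step shift returns both the component index $j$ (modulo $n$) and the basis label $k$ (modulo $3$) to their starting values simultaneously after exactly $m$ iterations. Without this divisibility, the iterations permute the basis labels $E_{1},E_{2},E_{3}$ nontrivially and the system fails to close, collapsing to the $\FFF$-level constraint $abc=1$ recorded in Lemma~\ref{simple Breuil module lemma1}.
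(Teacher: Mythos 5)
Your argument is correct and follows essentially the same route as the paper: decompose over $\FPB\otimes_{\FP}\FFF\simeq\bigoplus_{j}\FPB$ via the idempotents, posit a diagonal morphism, and derive a semi-linear recurrence whose cyclicity condition is solvable because $\FPB$ is algebraically closed. The only stylistic difference is that you keep general $(a,b,c)$ throughout, whereas the paper first normalizes to $(1,1,\alpha)$ using Lemma~\ref{simple Breuil module lemma1}; the computations are otherwise the same (your $(\mu_1^{(j)},\mu_2^{(j)},\mu_3^{(j)})$ is exactly the paper's $((x_i),(y_i),(z_i))$).

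One thing is wrong, though: your closing paragraph claims that without the hypothesis $3\mid n$ ``the system fails to close, collapsing to the $\FFF$-level constraint $abc=1$.'' That is not so, and it contradicts the remark the paper makes immediately after the lemma statement (the hypothesis is convenient, not necessary). If $3\nmid n$ then $\gcd(n,3)=1$, so the orbit of $j\mapsto j+3\pmod{n}$ has length $n$ rather than $m$; iterating your recurrence $n$ times gives $(\mu_{1}^{(j)})^{p^{3n}-1}=\sigma^{j}(abc)^{n}$, which is again solvable in $\FPB^{\times}$. The constraint $abc=1$ of Lemma~\ref{simple Breuil module lemma1} governs isomorphisms over $\FFF$; over $\FPB$ no such constraint appears. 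The divisibility hypothesis merely makes the recurrence close after $m$ steps and shortens the bookkeeping, which is exactly what the paper says.
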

In the lemma, we don't actually need the assumption $3|[\FFF:\FP]$. But it is good enough for our purpose and gives a shorter proof.

\begin{proof}
By lemma \ref{simple Breuil module lemma1}, it is enough to show that $\FPB\mcm(s^{i},1,1,\alpha)$ is isomorphic to $\FPB\mcm(s^{i},1,1,1)$
We only prove the case $i=1$, and the case $i=2$ is similar.  We let $f$ be a morphism from $\FPB\mcm(s,1,1,\alpha)$ to $\FPB\mcm(s,1,1,1)$ denoted by $$\Baseo\mapsto(x_{i})_{i}\Baseo,\Baset\mapsto(y_{i})_{i}\Baset,\Baseth\mapsto(z_{i})_{i}\Baseth$$ for $x_{i},y_{i},z_{i}\in\FPB$. Then, using the action in Lemma \ref{simple Breuil module lemma2}, one can check that $f$ commutes with $\vphi_{2}$ if and only if $x_{i},y_{i},z_{i}$ satisfy the equations $x_{i-1}^{p}=y_{i}$, $y_{i-1}^{p}=z_{i}$, and $z_{i-1}^{p}=\alpha^{p^{i}}x_{i}$ for $i\in\ZZZ/n\ZZZ$. But it is easy to check that this system of equations have solutions if and only if $x_{1}, x_{2}, x_{3}$ satisfying the equations
$$
x_{1}^{p^{3m}}=\alpha^{p^{7m-3+\frac{3m(m-1)}{2}}}x_{1},\hspace{0.1cm}
x_{2}^{p^{3m}}=\alpha^{p^{8m-3+\frac{3m(m-1)}{2}}}x_{2},\hspace{0.1cm}
x_{3}^{p^{3m}}=\alpha^{p^{9m-3+\frac{3m(m-1)}{2}}}x_{3}
$$
using $[\FFF:\FP]=3m$. It is also easy to check that the map $f$ commutes with $N$ since $N(\Baseo)=N(\Baset)=N(\Baseth)=0$.
\end{proof}

\begin{prop}\label{simple Breuil module prop}
Let $\bar{\rho}:=\mathrm{T}^{*}\mcm(s^{i},a,b,c)\otimes_{\FFF}\FPB$. Then
$$\bar{\rho}|_{I_{\QP}}\simeq
\left\{
  \begin{array}{ll}
    \omega_{3}^{2p+1}\oplus\omega_{3}^{2p^{2}+p}\oplus\omega_{3}^{2+p^{2}}, & \hbox{if $i=1$;} \\
    \omega_{3}^{p+2}\oplus\omega_{3}^{p^{2}+2p}\oplus\omega_{3}^{1+2p^{2}}, & \hbox{if $i=2$,}
  \end{array}
\right.
$$
where $\omega_{3}$ is the fundamental character of level $3$. In particular, $\bar{\rho}$ is absolutely irreducible.
\end{prop}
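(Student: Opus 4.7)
My plan is to reduce the computation to an explicit rank-one character calculation over $\FPB$ and then apply Theorem~5.2.2 of Caruso~\cite{Caruso}. Since the assertion about $\bar\rho|_{I_{\QP}}$ is made after base change to $\FPB$, and since both the functor $\mathrm{T}^{*}$ and the formation of these Breuil modules are compatible with extension of the coefficient field, I am free to enlarge $\FFF$. In particular, I replace $\FFF$ by a finite extension satisfying $3\mid[\FFF:\FP]$ so that Lemma~\ref{simple Breuil module lemma3} applies, reducing to $\FPB\mcm(s^{i},1,1,1)$; the parameters $a,b,c$ consequently drop out of the rest of the computation.

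Next, I use the decomposition $\FPB\otimes_{\FP}\FFF\cong\bigoplus_{\sigma^{j}:\FFF\hookrightarrow\FPB}\FPB$ together with the description of $\vphi_{r}$ in Lemma~\ref{simple Breuil module lemma2} to view $\FPB\mcm(s^{i},1,1,1)$ as a rank-three module over the product ring whose Frobenius shifts the $j$-index and raises each coordinate to the $p$-th power, while $s^{i}$ cyclically permutes the basis $\{\Baseo,\Baset,\Baseth\}$. Composing these two cyclic actions, the operator $\vphi_{2}^{3}$ acts on each basis element as a $p^{3}$-power semilinear endomorphism of a rank-one piece, with eigenvalue $1$ because we have normalized $a=b=c=1$. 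This exhibits $\FPB\mcm(s^{i},1,1,1)$ as a direct sum of three rank-one simple Breuil modules, each cyclically labelled by the orbit that $s^{i}$ induces on $\{\Baseo,\Baset,\Baseth\}$ and carrying the filtration weights $2,1,0$ in the corresponding order.

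Caruso's Theorem~5.2.2 then identifies each rank-one summand with a tame character of $I_{\QP}$ of level~$3$ whose exponent is determined by the accumulated $u$-weights along the $\vphi_{2}$-orbit; the three Galois-conjugate characters of each summand exhaust the characters occurring in $\bar\rho|_{I_{\QP}}$. The cycle $s$ for $i=1$ and the cycle $s^{2}$ for $i=2$ produce opposite cyclic orderings of the weight sequence $(2,1,0)$, which yield respectively the two orbits $\{2p+1,\,2p^{2}+p,\,2+p^{2}\}$ and $\{p+2,\,p^{2}+2p,\,1+2p^{2}\}$ modulo $p^{3}-1$, matching the statement. Since in each case the three exponents are distinct modulo $p^{3}-1$ and are cyclically permuted by $x\mapsto px$, the three resulting characters are pairwise distinct Frobenius conjugates, so their direct sum is irreducible over $I_{\QP}$, and therefore $\bar\rho$ is absolutely irreducible.

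The main obstacle I anticipate is keeping the conventions consistent: matching the exponent formula in Caruso's theorem with the direction of the cycle $s^{i}$ on the basis (and with the contravariance built into $\mathrm{T}^{*}$), so that $i=1$ really produces the first orbit and $i=2$ the second, rather than the swap. Once the diagonalization over $\FPB$ and the resulting dictionary are fixed, the appeal to Caruso and the verification of absolute irreducibility are purely formal.
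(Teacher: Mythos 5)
Your opening reduction is correct and matches the paper: enlarge $\FFF$ so that $3\mid[\FFF:\FP]$, apply Lemma~\ref{simple Breuil module lemma3} to drop to $\FPB\mcm(s^{i},1,1,1)$, and note that extending coefficients on $\mathrm{T}^{*}$ is harmless since the assertion is about $\bar\rho\otimes\FPB$.

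The middle of your argument has a genuine gap. You claim that because $\vphi_{2}^{3}$ acts diagonally, $\FPB\mcm(s^{i},1,1,1)$ is ``exhibited as a direct sum of three rank-one simple Breuil modules.'' This does not hold: the map $\vphi_{2}$ (not $\vphi_{2}^{3}$) sends $u^{2}\Baseo\mapsto E_{s^{i}(1)}$, $u\Baset\mapsto E_{s^{i}(2)}$, $\Baseth\mapsto E_{s^{i}(3)}$, so $\vphi_{2}$ permutes the would-be rank-one pieces cyclically and none of them is $\vphi_{2}$-stable. The analogy to keep in mind is a $3\times3$ cyclic permutation matrix: its cube is diagonal, but the matrix itself does not decompose into $1\times1$ blocks. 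A Breuil-module direct sum requires $\vphi_{2}$ itself to respect the decomposition. The paper does something structurally different: it forgets the $\FFF$-linear structure and treats $\FPB\mcm(s^{i},1,1,1)$ as a rank-$3$ Breuil module with $(\FPB\otimes_{\FP}\FP)[u]/u^{p}$-coefficients, invokes Caruso's Theorem~4.3.2 to recognize this rank-$3$ object as \emph{simple} (not a direct sum of rank-one pieces), and then reads off from Caruso's Theorem~5.2.2 the associated niveau-$3$ character of $I_{\QP}$ into $\FFF_{p^{3}}^{\times}$, namely $\omega_{3}^{p+2p^{2}}$ for $i=1$ and $\omega_{3}^{1+2p^{2}}$ for $i=2$; the three summands $\omega_{3}^{2p+1},\omega_{3}^{2p^{2}+p},\omega_{3}^{2+p^{2}}$ (resp.\ $\omega_{3}^{p+2},\omega_{3}^{p^{2}+2p},\omega_{3}^{1+2p^{2}}$) appear only after forming $\mathrm{Ind}_{G_{\mathbb{Q}_{p^{3}}}}^{G_{\QP}}$ and restricting to $I_{\QP}$. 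So the decomposition is at the representation level over $I_{\QP}$, not at the Breuil-module level.

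A smaller but related slip occurs at the end: you assert that a direct sum of three pairwise distinct, Frobenius-permuted characters ``is irreducible over $I_{\QP}$.'' A direct sum of three characters is of course reducible over $I_{\QP}$; what is true is that $\bar\rho$ is irreducible over $G_{\QP}$, precisely because Frobenius cyclically permutes the three $I_{\QP}$-characters and so no proper nonzero subspace can be $G_{\QP}$-stable. The exponents you list are correct and match the paper, but the route by which you reach them — the purported Breuil-module decomposition — needs to be replaced by the paper's appeal to Caruso's simplicity result for the rank-$3$ module over $\FPB[u]/u^{p}$ followed by induction.
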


\begin{proof}
Without loss of generality, we may assume that $3|[\FFF:\FP]$. By lemma \ref{simple Breuil module lemma3}, it is enough to show that the representation of $I_{\QP}$ corresponding to $\FPB\mcm(s^{i},1,1,1)$ is of niveau $3$. But we may regard $\FPB\mcm(s^{i},1,1,1)$ as a Breuil module with $(\FPB\otimes_{\FP}\FP)[u]/u^{p}$-coefficients. By Theorem 4.3.2 in~\cite{Caruso}, this is a simple module and, by Theorem 5.2.2 in~\cite{Caruso}, the corresponding representation is $\omega_{3}^{p+2p^{2}}:I_{\QP}\rightarrow\FFF_{p^{3}}^{\times}$ if $i=1$ and $\omega_{3}^{1+2p^{2}}:I_{\QP}\rightarrow\FFF_{p^{3}}^{\times}$ if $i=2$. Taking $\mathrm{Ind}_{G_{\mathbb{Q}_{p^{3}}}}^{G_{\QP}}$ on these characters completes the proof.
\end{proof}

\subsection{Non-simple Breuil modules}
In this subsection, we introduce a few examples of Breuil modules that correspond to reducible representations of $G_{\QP}$.
\begin{exam}
For a $3\times3$ invertible matrix $(a_{i,j})$ over $\FFF$, the Breuil module
$\mathcal{M}(a_{i,j})$ is defined as follows:
     \begin{itemize}
     \item $\mcm:=\mcf(\Baseo,\Baset,\Baseth)$;
     \item $\mcm_{2}:=\mcf(u\Baseo,u\Baset,u\Baseth)$;
     \item $\vphi_{2}:\mcm_{2}\rightarrow \mcm$ is induced by
     $\left\{
       \begin{array}{ll}
         u\Baseo\mapsto a_{1,1}\Baseo+a_{1,2}\Baset+a_{1,3}\Baseth  & \hbox{} \\
         u\Baset\mapsto a_{2,1}\Baseo+a_{2,2}\Baset+a_{2,3}\Baseth & \hbox{} \\
         u\Baseth\mapsto a_{3,1}\Baseo+a_{3,2}\Baset+a_{3,3}\Baseth; & \hbox{}
       \end{array}
     \right.$
     \item $N:\mcm\rightarrow\mcm$ is induced by $N(\Baseo)=N(\Baset)=N(\Baseth)=0$.
     \end{itemize}
\end{exam}

\begin{prop}\label{non-simple Breuil module prop1}
The corresponding representations to $\mcm(a_{i,j})$ are reducible.
\end{prop}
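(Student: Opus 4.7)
The plan is to construct an explicit rank-one quotient Breuil module $f \colon \mcm(a_{i,j}) \twoheadrightarrow \mcm'$ (after possibly enlarging $\FFF$), and then invoke the second half of Proposition~3.2.6 of~\cite{EGH} quoted in the introduction: such a quotient forces $\mathrm{T}^{*}(\mcm')$ to embed as a subrepresentation of $\mathrm{T}^{*}(\mcm(a_{i,j})) \otimes_{\FFF} \FPB$, proving reducibility.

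The key structural observation is that $\mcm_{2} = u\mcm$, so all three generators sit at the same filtration level. For each $\alpha \in \FFF^{\times}$ this suggests the rank-one target $\mcm'(\alpha)$ defined by $\mcm' = \mcf e$, $\mcm'_{2} = u\mcf e$, $\vphi_{2}(ue) = \alpha e$, and $N(e)=0$, which one checks directly is a legitimate weight-$2$ Breuil module. I would look for a candidate quotient map of the form $f(\Baseo)=c_{1}e$, $f(\Baset)=c_{2}e$, $f(\Baseth)=c_{3}e$ with $c_{i}\in\mcf$. Surjectivity of $f$ and $f(\mcm_{2})=\mcm'_{2}$ both reduce to requiring that some $c_{i}$ have nonzero constant term, while $N$-compatibility is automatic since $N$ annihilates all generators on both sides.

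Imposing $\vphi_{2}$-compatibility, and using that $\vphi(u)=u^{p}=0$ in $\mcf$, reduces to the system $\sum_{j} a_{i,j}\, c_{j} = \alpha\, c_{i}^{(0)}$ for $i=1,2,3$ in $\mcf$, where $c_{i}^{(0)}$ denotes the constant term of $c_{i}$. Splitting $c_{j}=c_{j}^{(0)}+u\tilde c_{j}$ and matching the $u$-divisible part of the system, invertibility of $(a_{i,j})$ forces $\tilde c_{j}=0$, so each $c_{j}$ actually lies in $\FFF$; the remaining constant-term equation then says precisely that $(c_{1},c_{2},c_{3})^{T}$ is an $\alpha$-eigenvector of $(a_{i,j})$. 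To finish, I would enlarge $\FFF$ so that the characteristic polynomial of $(a_{i,j})$ splits (which does not affect the associated $\FPB$-representation); since $(a_{i,j})$ is invertible, an eigenvector with nonzero eigenvalue then exists, yielding the desired quotient map and hence the claimed subrepresentation. The only mildly delicate step is the coefficient-ring bookkeeping in the $\vphi_{2}$-compatibility check, which nevertheless collapses cleanly thanks to $u^{p}=0$ in $\mcf$.
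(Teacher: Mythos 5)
Your proof is correct and follows essentially the same route as the paper: enlarge $\FFF$ so the characteristic polynomial of $(a_{i,j})$ has a root, note invertibility forces the eigenvalue to be nonzero, and send the three generators to multiples of a single generator of a rank-one Breuil module via a corresponding eigenvector, producing a quotient map and hence a subrepresentation. One small slip in the citation: the implication you actually use (quotient map $\Rightarrow$ subrepresentation) is the easy forward direction stated in the introduction, whereas Proposition~3.2.6 of~\cite{EGH} is the converse.
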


\begin{proof}
Assume that $\FFF$ is big enough so that the characteristic equation of $(a_{i,j})$ has a solution $d$ in $\FFF$. Note that $d\not=0$ since $(a_{i,j})$ is invertible. We define Breuil modules $\mcm':=\mcf(\Baseo)$ of rank $1$ as follows:
\begin{itemize}
\item $\mcm'_{2}:=\mcf(u\Baseo)$;
\item $\vphi_{2}:\mcm'_{2}\rightarrow \mcm'$ is induced by
     $u\Baseo\mapsto d\Baseo$;
\item $N:\mcm'\rightarrow\mcm'$ is induced by $N(E_{1})=0$.
\end{itemize}
Let $(a,b,c)$ be an eigenvector associated with the eigenvalue $d$. Then the association $\Baseo\mapsto a\Baseo$, $\Baset\mapsto b\Baseo$, $\Baseth\mapsto c\Baseo$ induces a quotient map from $\mcm$ to $\mcm'$. Hence, the corresponding representations are reducible.
\end{proof}

The following two examples also correspond to reducible mod $p$ representations of $G_{\QP}$. We prove it by constructing a non-trivial morphism between these two modules.
\begin{exam}
For $a,b,c,d$ in $\FFF^{\times}$, the Breuil module
$\mathcal{M}(a,b,c,d)$ is defined as follows:
\begin{itemize}
\item $\mcm:=\mcf(\Baseo,\Baset,\Baseth)$;
\item $\mcm_{2}:=\mcf(u^{2}\Baseo,\Baset+d\Baseth,u\Baseth)$;
\item $\vphi_{2}:\mcm_{2}\rightarrow \mcm$ is induced by
     $\left\{
       \begin{array}{ll}
         u^{2}\Baseo\mapsto a\Baseth & \hbox{} \\
         \Baset+ d\Baseth\mapsto b\Baseo & \hbox{} \\
         u\Baseth\mapsto c\Baset; & \hbox{}
       \end{array}
     \right.$
\item $N:\mcm\rightarrow\mcm$ is induced by $N(\Baseo)=N(\Baset)=N(\Baseth)=0$.
\end{itemize}
\end{exam}

\begin{exam}
For $a,b,c,d$ in $\FFF^{\times}$, the Breuil module
$\mathcal{M}'(a,b,c,d)$ is defined as follows:
\begin{itemize}
\item $\mcm':=\mcf(\Baseo,\Baset,\Baseth)$;
\item $\mcm_{2}':=\mcf(u\Baseo+d u\Baset, u^{2}\Baset,\Baseth)$;
\item $\vphi_{2}:\mcm_{2}'\rightarrow \mcm'$ is induced by
     $\left\{
       \begin{array}{ll}
         u\Baseo+d u\Baset\mapsto a\Baset & \hbox{} \\
         u^{2}\Baset\mapsto b\Baseth & \hbox{} \\
         \Baseth\mapsto c\Baseo; & \hbox{}
       \end{array}
     \right.$
\item $N:\mcm'\rightarrow\mcm'$ is induced by $N(\Baseo)=N(\Baset)=N(\Baseth)=0$.
\end{itemize}
\end{exam}

\begin{prop} \label{non-simple Breuil module prop2}
There is a non-trivial morphism between $\mcm(a,b,c,d)$ and $\mcm'(x,y,z,w)$ if $x=-cdw$. Hence, in particular, both of them correspond to reducible representations.
\end{prop}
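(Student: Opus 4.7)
The plan is to exhibit an explicit non-trivial $\mcf$-linear map $f : \mcm'(x,y,z,w) \to \mcm(a,b,c,d)$ sending $\Baset \mapsto \Baset$ and $\Baseo, \Baseth \mapsto 0$ (abusing notation, in that the basis vectors of both modules are denoted $\Baseo,\Baset,\Baseth$), and then to deduce reducibility of both associated representations from the existence of $f$.

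The verification of the Breuil-module axioms for $f$ has three parts. First, the containment $f(\mcm'_{2}) \subseteq \mcm_{2}$ follows by computing the images of the three generators of $\mcm'_{2}$, which are $wu\Baset$, $u^{2}\Baset$, and $0$, and rewriting the first two via the identity $u^{k}\Baset = u^{k}(\Baset + d\Baseth) - d u^{k-1}(u\Baseth)$ for $k = 1,2$. Second, $fN = Nf$ is automatic, since $N$ annihilates every basis vector in both modules. Third, $f\vphi_{2} = \vphi_{2}f$ on $\mcm'_{2}$ is where the hypothesis $x = -cdw$ is essential: on the generator $u\Baseo + wu\Baset$, the left-hand side equals $f(x\Baset) = x\Baset$, while the right-hand side, computed using the decomposition above, equals $\vphi(wu)\cdot b\Baseo + \vphi(-wd)\cdot c\Baset = -cdw\Baset$, because $\vphi$ of an element of $\mcf$ reads off its constant term and $wu$ has none. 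On $u^{2}\Baset$ and $\Baseth$, both sides vanish by similar routine checks.

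The image of $f$ is $\mathcal{N} := \mcf \cdot \Baset \subseteq \mcm$, which acquires a rank-one sub-Breuil-module structure via $\mathcal{N}_{2} := \mathcal{N} \cap \mcm_{2} = u\mcf\cdot \Baset$ and the restricted $\vphi_{2}$ (satisfying $\vphi_{2}(u\Baset) = -cd\Baset$ by the same calculation). The morphism $f$ therefore factors as $\mcm' \twoheadrightarrow \mathcal{N} \hookrightarrow \mcm$. The first arrow, combined with Proposition 3.2.6 of~\cite{EGH}, yields a rank-one subrepresentation $\mathrm{T}^{*}(\mathcal{N}) \hookrightarrow \mathrm{T}^{*}(\mcm')$; the second arrow produces a rank-two Breuil-module quotient $\mcm \twoheadrightarrow \mcm/\mathcal{N}$, hence (by the same proposition) a rank-two subrepresentation $\mathrm{T}^{*}(\mcm/\mathcal{N}) \hookrightarrow \mathrm{T}^{*}(\mcm)$, so both Galois representations are reducible. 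The main obstacle in all of this is the $\vphi_{2}$-compatibility on $u\Baseo + wu\Baset$, which is essentially the only place where the algebraic identity $x = -cdw$ is forced.
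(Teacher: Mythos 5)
Your proposal is correct and takes exactly the approach of the paper: the paper's proof exhibits the same map $\Baseo\mapsto 0$, $\Baset\mapsto\Baset$, $\Baseth\mapsto 0$ from $\mcm'(x,y,z,w)$ to $\mcm(a,b,c,d)$ and declares the verification routine, while you spell out the $\vphi_{2}$-compatibility computation (with the decomposition $u^k\Baset = u^k(\Baset+d\Baseth) - d u^{k-1}(u\Baseth)$ and the observation that $\vphi$ kills $u$) and the factorization through the rank-one submodule $\mathcal{N}=\mcf\cdot\Baset$. One small attribution note: the implication ``quotient map of Breuil modules $\Rightarrow$ subrepresentation under $\mathrm{T}^{*}$'' is the direction the paper calls clear, whereas Proposition 3.2.6 of \cite{EGH} is cited for the converse; this does not affect the argument.
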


\begin{proof}
It is routine to check that the association $\Baseo\mapsto 0$, $\Baset\mapsto\Baset$, $\Baseth\mapsto0$ gives rise to a morphism from $\mcm'(x,y,z,w)$ to $\mcm(a,b,c,d)$ if $x=-cdw$.
\end{proof}

\section{Semi-stable representations with Hodge--Tate weights $(0,1,2)$}\label{sec-adm filt mod}
In the paper~\cite{Park}, we have classified all the $3$-dimensional semi-stable representations of $G_{\QP}$ with regular Hodge--Tate weights. The families $D_{\rnk N=1}^{4}$, $D_{\rnk N=1}^{6}$, $D_{\rnk N=1}^{10}$, $D_{\rnk N=1}^{12}$, $D_{\rnk N=1}^{17}$, $D_{\rnk N=1}^{18}$, and $D_{\rnk N=1}^{20}$ in~\cite{Park} are the only ones that contain the irreducible semi-stable and non-crystalline representations of $G_{\QP}$ with Hodge-Tate weights $(0,1,2)$. In this section, we glue these families together. The following two families $D_{[0,\frac{1}{2}]}$ and $D_{[\frac{1}{2},1]}$ parameterize all the filtered modules listed above.

\begin{exam}\label{example D[0,1/2]}
For $\lambda,\lambdat\in \roi$ and $\mflo,\mflt\in E$, we define the admissible filtered $(\phi,N)$-modules $D_{[0,\frac{1}{2}]}=D_{[0,\frac{1}{2}]}(\lambda,\lambdat,\mflo,\mflt)$ as follows:
\begin{itemize}
\item $\fil{i}D=\left\{
                  \begin{array}{ll}
                    D=E(\baseo,\baset,\baseth) & \hbox{if $i\leq0$,} \\
                    E(\baseo+\mflo\baseth,\baset+\mflt\baseth) & \hbox{if $i=1$,} \\
                    E(\baseo+\mflo\baseth) & \hbox{if $i=2$,} \\
                    0 & \hbox{if $i\geq3$;}
                  \end{array}
                \right.
$
\item
$[\phi]=
\begin{small}\left(
  \begin{array}{ccc}
    p\lambda & 0  & 0 \\
    1 & \lambdat & 0 \\
    0 & 0 & \lambda \\
  \end{array}
\right)
\end{small}
$ and
$[N]=
\begin{small}\left(
  \begin{array}{ccc}
    0 & 0  & 0 \\
    0 & 0  & 0 \\
    1 & 0  & 0 \\
  \end{array}
\right);
\end{small}
$
\item $0\leq\val{\lambda}\leq\frac{1}{2}$ and $2\val\lambda+\val\lambdat=2$.
\end{itemize}
\end{exam}
Note that $\lambdat\not=\lambda$ since $0\leq\val\lambda\leq\frac{1}{2}<1\leq\val\lambdat\leq2$.

\begin{prop}\label{Prop-ad-filt-module[0,1/2]}
$D_{[0,\frac{1}{2}]}$ parameterizes $D_{\rnk N=1}^{10}$, $D_{\rnk N=1}^{12}$, $D_{\rnk N=1}^{18}$, and $D_{\rnk N=1}^{20}$ for $0\leq\val{\lambda}\leq\frac{1}{2}$ with Hodge--Tate weights $(0,1,2)$. Moreover, $D_{[0,\frac{1}{2}]}(\lambda,\lambdat,\mflo,\mflt)$ has a submodule if and only if either $\val\lambda=0$ or $\val\lambda=\frac{1}{2}$ and $\mflt=0$.
\end{prop}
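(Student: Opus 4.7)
The proof splits into two essentially independent parts. For the parametrization statement, the plan is to place the explicit matrix representatives of $D_{\rnk N=1}^{10}$, $D_{\rnk N=1}^{12}$, $D_{\rnk N=1}^{18}$, and $D_{\rnk N=1}^{20}$ from~\cite{Park} side-by-side with the normal form of Example~\ref{example D[0,1/2]}, and to check case-by-case that a suitable rescaling of the basis $\baseo,\baset,\baseth$ brings each representative into the prescribed lower-triangular Frobenius with eigenvalues $p\lambda,\lambdat,\lambda$, the remaining freedom being absorbed into the two $\mfl$-invariants $\mflo,\mflt\in E$. The admissibility identity $\hodgen(D)=\newtonn(D)=3$ directly yields the constraint $2\val{\lambda}+\val{\lambdat}=2$, while the range $\val{\lambda}\le\tfrac{1}{2}$ is exactly what separates these four families from the remaining three, which will be handled by $D_{[\frac{1}{2},1]}$ in the companion example.

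For the submodule statement, the strategy is to enumerate all proper nontrivial filtered $(\phi,N)$-submodules $D'$ of $D$ and to test the admissibility criterion $\hodgen(D')=\newtonn(D')$ on each. Since the matrix of $\phi$ is lower triangular with diagonal entries $p\lambda,\lambdat,\lambda$ — pairwise distinct except on the single locus $p\lambda=\lambdat$, which forces $\val{\lambda}=\tfrac{1}{3}$ — listing the $\phi$-stable subspaces is elementary, and imposing $N$-stability (recall $N\baseo=\baseth$ while $N\baset=N\baseth=0$) cuts the list down to precisely four candidates:
$$E\baset,\quad E\baseth,\quad E\baset+E\baseth,\quad \langle(p\lambda-\lambdat)\baseo+\baset,\,\baseth\rangle.$$

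For each candidate, computing $\newtonn$ as $v_p(\det\phi|_{D'})$ and $\hodgen$ by intersecting $\fil{1}D_K$ and $\fil{2}D_K$ with $D'_K$ produces, in the order listed above, the pairs $(\hodgen,\newtonn)$ equal to $(\hodgen,2-2\val{\lambda})$ with $\hodgen=1$ iff $\mflt=0$, then $(0,\val{\lambda})$, $(1,\,2-\val{\lambda})$, and $(1,\,1+2\val{\lambda})$. Under the running constraints $\val{\lambda}\in[0,\tfrac{1}{2}]$ and $2\val{\lambda}+\val{\lambdat}=2$, the equality $\hodgen=\newtonn$ is attained exactly when $\val{\lambda}=0$ (via $E\baseth$ and via the fourth candidate) or when $\val{\lambda}=\tfrac{1}{2}$ with $\mflt=0$ (via $E\baset$), which gives the claimed characterization.

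The main thing to verify carefully is that the enumeration of $\phi$-stable subspaces is truly exhaustive, especially on the Jordan locus $\val{\lambda}=\tfrac{1}{3}$, where the fourth candidate degenerates into $E\baset+E\baseth$ and the list of $\phi$-invariant subspaces must be rederived from the Jordan block for the eigenvalue $p\lambda=\lambdat$; an inspection shows none of the surviving candidates is admissible there, so the conclusion is unaffected.
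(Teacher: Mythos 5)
Your treatment of the second half of the proposition — the submodule criterion — takes a genuinely different route from the paper. The paper's proof is brief and delegates: it invokes the classification in~\cite{Park} of which members of $D^{18}_{\rnk N=1}$ and $D^{20}_{\rnk N=1}$ are reducible, and transfers that information through the identifications established in the first half. You instead re-derive the criterion directly from the normal form of $D_{[0,\frac12]}$: you enumerate the $\phi$- and $N$-stable proper subspaces, correctly finding $E\baset$, $E\baseth$, $E\baset\oplus E\baseth$, and (off the locus $p\lambda=\lambdat$) the span of $(p\lambda-\lambdat)\baseo+\baset$ and $\baseth$; you compute $\hodgen$ and $\newtonn$ for each; and you read off where a proper submodule is admissible. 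I checked the Hodge and Newton numbers you report — $(\hodgen,2-2\val\lambda)$ with $\hodgen=1$ iff $\mflt=0$ for $E\baset$, then $(0,\val\lambda)$, $(1,2-\val\lambda)$, and $(1,1+2\val\lambda)$ — and they are correct; the degeneration on the locus $p\lambda=\lambdat$ (which forces $\val\lambda=\frac13$, where none of the surviving candidates is admissible) is also handled properly. Your approach is more self-contained and elementary, not requiring the reader to unpack~\cite{Park}, at the cost of being longer than the paper's two-sentence appeal. For the first half of the proposition your sketch is essentially the paper's strategy, though you do not exhibit the explicit isomorphisms; one small imprecision is that for $D^{18}_{\rnk N=1}$ and $D^{20}_{\rnk N=1}$ the required map (in the paper, $\baseo\mapsto(p\lambda-\lambdat)\baseo+\baset$, $\baset\mapsto-\baset$, $\baseth\mapsto(p\lambda-\lambdat)\baseth$) is a genuine change of basis rather than a rescaling, so the phrase \emph{rescaling of the basis} undersells what must be done.
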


\begin{proof}
It is immediate that the identity map gives rise to an isomorphism from $D_{\rnk N=1}^{10}$ and from $D_{\rnk N=1}^{12}$ to $D_{[0,\frac{1}{2}]}$, and so $D_{\rnk N=1}^{10}$ and $D_{\rnk N=1}^{12}$ are sitting on the $p\lambda=\lambdat$ part of $D_{[0,\frac{1}{2}]}$.
The association $\baseo\mapsto (p\lambda-\lambdat)\baseo+\baset$, $\baset\mapsto -\baset$, $\baseth\mapsto(p\lambda-\lambdat)\baseth$ gives rise to an isomorphism from $D_{\rnk N=1}^{18}$ and from $D_{\rnk N=1}^{20}$ to $D_{[0,\frac{1}{2}]}$. Then, by replacing $\mflo$ and $-(p\lambda-\lambdat)\mflt$ with $\mflo$ and $\mflt$ respectively, we see that $D_{\rnk N=1}^{18}$ and $D_{\rnk N=1}^{20}$ cover exactly the $p\lambda\not=\lambdat$ part of $D_{[0,\frac{1}{2}]}$.

For the second part, we know that only $D_{\rnk N=1}^{18}$ and $D_{\rnk N=1}^{20}$ contain reducible semi-stable representations by a result of~\cite{Park}. Since $D_{\rnk N=1}^{18}$ (resp., $D_{\rnk N=1}^{20}$ for $0\leq\val\lambda\leq\frac{1}{2}$) has a submodule if and only if either $\val{\lambda}=0$ or $\val\lambda=\frac{1}{2}$ (resp., $\val\lambda=0$), the statement is now clear from the association above.
\end{proof}

\begin{exam}\label{example D[1/2,1]}
For $\lambda,\lambdat\in \roi$ and $\mflo,\mflt\in E$, we define the admissible filtered $(\phi,N)$-modules $D_{[\frac{1}{2},1]}=D_{[\frac{1}{2},1]}(\lambda,\lambdat,\mflo,\mflt)$ as follows:
\begin{itemize}
\item $\fil{i}D=\left\{
                  \begin{array}{ll}
                    D=E(\baseo,\baset,\baseth) & \hbox{if $i\leq0$,} \\
                    E(\baseo+\mflo\baset+\mflt\baseth,\baset) & \hbox{if $i=1$,} \\
                    E(\baseo+\mflo\baset+\mflt\baseth) & \hbox{if $i=2$,} \\
                    0 & \hbox{if $i\geq3$;}
                  \end{array}
                \right.
$
\item
$
[\phi]=
\begin{small}\left(
  \begin{array}{ccc}
    p\lambda & 0  & 0 \\
    0 & \lambdat & 0 \\
    0 & 1 & \lambda \\
  \end{array}
\right)
\end{small}
$ and
$
[N]=
\begin{small}\left(
  \begin{array}{ccc}
    0 & 0  & 0 \\
    0 & 0  & 0 \\
    1 & 0  & 0 \\
  \end{array}
\right);
\end{small}
$
\item $\frac{1}{2}\leq\val\lambda\leq1$ and $2\val{\lambda}+\val\lambdat=2$.
\end{itemize}
\end{exam}
Note that $p\lambda\not=\lambdat$ since $0\leq\val\lambdat\leq1<\frac{3}{2}\leq\val{p\lambda}\leq2$.

\begin{prop}\label{Prop-ad-filt-module[1/2,1]}
$D_{[\frac{1}{2},1]}$ parameterizes $D_{\rnk N=1}^{4}$, $D_{\rnk N=1}^{6}$, $D_{\rnk N=1}^{17}$, and $D_{\rnk N=1}^{20}$ for $\frac{1}{2}\leq\val{\lambda}\leq1$ with Hodge--Tate weights $(0,1,2)$. Moreover, $D_{[\frac{1}{2},1]}(\lambda,\lambdat,\mflo,\mflt)$ has a submodule if and only if either $\val\lambda=1$ or $\val\lambda=\frac{1}{2}$ and $\mflo=0$.
\end{prop}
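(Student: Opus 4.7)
The plan is to proceed in close parallel with the proof of Proposition~\ref{Prop-ad-filt-module[0,1/2]}. First I would exhibit explicit change-of-basis isomorphisms from each of the four families $D_{\rnk N=1}^{4}$, $D_{\rnk N=1}^{6}$, $D_{\rnk N=1}^{17}$, $D_{\rnk N=1}^{20}$ in~\cite{Park} to $D_{[\frac{1}{2},1]}(\lambda,\lambdat,\mflo,\mflt)$. Guided by the previous proposition, I expect two of these (say $D_{\rnk N=1}^{4}$ and $D_{\rnk N=1}^{6}$, whose Frobenius is already in the shape of $[\phi]$) to be realized via the identity map after matching up the $\mfl$-invariants, while the other two require a nontrivial change of basis analogous to the map $\baseo\mapsto(p\lambda-\lambdat)\baseo+\baset$ used before—adapted to the fact that now the off-diagonal $1$ in $[\phi]$ sits in the $(3,2)$-position rather than the $(2,1)$-position. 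In each case I would check that the filtration, $\phi$, and $N$ are preserved, and rescale the $\mfl$-invariants so that the full parameter region $\frac{1}{2}\leq\val\lambda\leq 1$, $2\val\lambda+\val\lambdat=2$ is swept out. Because $v_{p}(p\lambda)\geq\frac{3}{2}>1\geq v_{p}(\lambdat)$, the condition $p\lambda\neq\lambdat$ is automatic, so no degenerate case arises at this step.

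For the second assertion I would invoke the classification in~\cite{Park}, according to which among the seven rank-$N=1$ families only a specified subset contains reducible semi-stable representations. Transporting the reducibility loci of the contributing families through the change-of-basis maps above translates directly into the union $\{\val\lambda=1\}\cup\{\val\lambda=\frac{1}{2},\ \mflo=0\}$ inside $D_{[\frac{1}{2},1]}$. As a cross-check, one can also argue intrinsically: any nonzero proper filtered $(\phi,N)$-submodule $D'$ must be $\phi$- and $N$-stable with $\hodgen(D')\leq\newtonn(D')$. Since $N(\baset)=N(\baseth)=0$ and $N(\baseo)=\baseth$, the $N$-stable subspaces are limited to a short list, and the $\phi$-eigenlines inside the plane $E\baset\oplus E\baseth$ are easily enumerated from the $(3,2)$-entry of $[\phi]$. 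Intersecting each candidate $D'_{K}$ with the Hodge filtration and enforcing admissibility then yields the stated conditions on $\val\lambda$ and $\mflo$.

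The main obstacle I anticipate is the second part, because $[\phi]$ mixes $\baset$ into $\baseth$, so the $\phi$-eigenvectors are not coordinate vectors in $(\baseo,\baset,\baseth)$. One must carefully compute which admissible $\phi$- and $N$-stable subspaces actually meet $\filo D_{K}$ or $\filt D_{K}$ nontrivially as functions of $\mflo$ and $\mflt$, and it is precisely this linear-algebra bookkeeping that produces the boundary behavior at $\val\lambda=1$ (where the Newton slope of $\baseth$ forces an admissible eigenline) and the degeneracy $\mflo=0$ at $\val\lambda=\frac{1}{2}$ (where $\fil{1}D_{K}$ acquires a special alignment with a $\phi$-stable plane).
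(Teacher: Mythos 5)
Your strategy coincides with the paper's. For the first assertion the paper does exactly what you outline: it exhibits four explicit isomorphisms from $D_{\rnk N=1}^{4}$, $D_{\rnk N=1}^{6}$, $D_{\rnk N=1}^{17}$, $D_{\rnk N=1}^{20}$ to $D_{[\frac{1}{2},1]}$ and then reparameterizes the $\mfl$-invariants to sweep out the region. One detail to flag: the relevant dichotomy here is $\lambdat=\lambda$ versus $\lambdat\neq\lambda$ (not $p\lambda\neq\lambdat$, which, as you correctly observe, is automatic in this range), and only $D_{\rnk N=1}^{4}$ is realized by the identity map; $D_{\rnk N=1}^{6}$ already needs the substitution $\baset\mapsto\baset-\mflt\baseth$, while $D_{\rnk N=1}^{17}$ and $D_{\rnk N=1}^{20}$ use maps of the form $\baseo\mapsto-\baseo$, $\baset\mapsto(\lambdat-\lambda)(\cdot)\baset+(\cdot)\baseth$, $\baseth\mapsto-\baseth$. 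For the second assertion the paper follows your primary route: it cites the classification in~\cite{Park}, which says only $D_{\rnk N=1}^{17}$ and $D_{\rnk N=1}^{20}$ contain reducible members, records that $D_{\rnk N=1}^{17}$ is reducible iff $\val\lambda\in\{\tfrac12,1\}$ while $D_{\rnk N=1}^{20}$ (in this $\lambda$-range) is reducible iff $\val\lambda=1$, and then reads off the locus $\{\val\lambda=1\}\cup\{\val\lambda=\tfrac12,\ \mflo=0\}$ from the glueing. Your intrinsic cross-check by enumerating $\phi$- and $N$-stable subspaces and testing admissibility is a genuine alternative that would reach the same conclusion without appealing to~\cite{Park}, but requires considerably more linear-algebra bookkeeping; the paper opts for the shorter citation.
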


\begin{proof}
The identity map gives rise to an isomorphism from $D_{\rnk N=1}^{4}$ to $D_{[\frac{1}{2},1]}$, and so $D_{\rnk N=1}^{4}$ covers the $\lambdat=\lambda$ and $\mflo=0$ part of $D_{[\frac{1}{2},1]}$. The association $\baseo\mapsto\baseo$, $\baset\mapsto\baset-\mflt\baseth$, $\baseth\mapsto\baseth$ gives an isomorphism from $D_{\rnk N=1}^{6}$ to $D_{[\frac{1}{2},1]}$, and so we see that $D_{\rnk N=1}^{6}$ covers the $\lambdat=\lambda$ and $\mflo\not=0$ part of $D_{[\frac{1}{2},1]}$ by replacing $\mflo$ and $-\mflo\mflt$ with $\mflo$ and $\mflt$ respectively. The association $\baseo\mapsto-\baseo$, $\baset\mapsto(\lambdat-\lambda)\baset+\baseth$, $\baseth\mapsto-\baseth$ gives an isomorphism from $D_{\rnk N=1}^{17}$ to $D_{[\frac{1}{2},1]}$, so that we see that $D_{\rnk N=1}^{17}$ covers the $\lambdat\not=\lambda$ and $\mflo=0$ part of $D_{[\frac{1}{2},1]}$. The association $\baseo\mapsto-\baseo$, $\baset\mapsto(\lambdat-\lambda)\mflt\baset+\mflt\baseth$, $\baseth\mapsto-\baseth$ gives an isomorphism from $D_{\rnk N=1}^{20}$ to $D_{[\frac{1}{2},1]}$, and so we see that $D_{\rnk N=1}^{20}$ covers the $\lambdat\not=\lambda$ and $\mflo\not=0$ part of $D_{[\frac{1}{2},1]}$ by replacing $(\lambda-\lambdat)\mflt$ and $\mflo-\mflt$ with $\mflo$ and $\mflt$ respectively.

For the second part, we know that only $D_{\rnk N=1}^{17}$ and $D_{\rnk N=1}^{20}$ contain reducible semi-stable representations by a result of~\cite{Park}. Since $D_{\rnk N=1}^{17}$ (resp., $D_{\rnk N=1}^{20}$ for $\frac{1}{2}\leq\val{\lambda}\leq1$) has a submodule if and only if either $\val{\lambda}=\frac{1}{2}$ or $\val\lambda=1$ (resp., $\val\lambda=1$), the statement is now clear.
\end{proof}

Note that there are no isomorphisms between the modules $D_{[0,\frac{1}{2}]}$ and between the modules $D_{[\frac{1}{2},1]}$ for different values of the parameters $\lambda,\lambdat,\mflo,\mflt$. (See the first part of the proof of the proposition below for the reason.) But there are isomorphisms between $D_{[0,\frac{1}{2}]}$ and $D_{[\frac{1}{2},1]}$ and this happens between the irreducible parts of $D_{[0,\frac{1}{2}]}$ and $D_{[\frac{1}{2},1]}$ only when $\val{\lambda}=\frac{1}{2}$.

\begin{prop}\label{Prop-ad-filt-module}
$D_{[0,\frac{1}{2}]}(\lambda,\lambdat,\mflo,\mflt)$ are isomorphic to $D_{[\frac{1}{2},1]}(\lambda',\lambdat',\mflo',\mflt')$ if and only if
\begin{itemize}
\item $\lambda=\lambda'$ and $\lambdat=\lambdat'$ $(\mbox{and so }\val\lambda=\frac{1}{2}\mbox{ and }\val\lambdat=1)$;
\item $(\lambda-\lambdat)\mflt=(\lambdat-p\lambda)\mflo'$ and $(\lambdat-p\lambda)(\mflo-\mflt')=\mflt$;
\item $\mflt\not=0\not=\mflo'$.
\end{itemize}
\end{prop}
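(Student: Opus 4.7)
The plan is to take an arbitrary $E$-linear map $f$ between the two modules, expand its matrix in the given bases (which I distinguish as $\baseo,\baset,\baseth$ for $D_{[0,\frac{1}{2}]}$ and $\baseo',\baset',\baseth'$ for $D_{[\frac{1}{2},1]}$), and translate the compatibilities with $\phi$, $N$, and the filtration into linear constraints on both the matrix entries and the parameters.

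First, for the remark preceding the proposition that no isomorphisms exist within each family for different parameter values, I would run the analogous but simpler calculation for a map $D_{[0,\frac{1}{2}]}(\lambda,\lambdat,\mflo,\mflt)\to D_{[0,\frac{1}{2}]}(\lambda'',\lambdat'',\mflo'',\mflt'')$: $N$-compatibility forces $f(\baseth)=a_1\baseth$ and $f(\baset)\in\langle\baset,\baseth\rangle$; $\phi$-compatibility on $\baset$ and $\baseth$ uses $\lambda\neq\lambdat$ (from $\val\lambda\leq\frac12<1\leq\val\lambdat$) to force $f(\baset)=b_2\baset$ and $\lambda''=\lambda$; $\phi$-compatibility on $\baseo$ then yields $\lambdat''=\lambdat$, $b_2=a_1$, and $a_3=0$ (the last from $p\lambda\neq\lambda$); finally, preservation of $\fil{2}$ and of the extra generator of $\fil{1}$ forces $\mflo''=\mflo$ and $\mflt''=\mflt$. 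The argument for $D_{[\frac{1}{2},1]}$ is entirely analogous.

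For the main statement, write $f(\baseo)=a_1\baseo'+a_2\baset'+a_3\baseth'$, $f(\baset)=b_2\baset'+b_3\baseth'$, $f(\baseth)=a_1\baseth'$; the shapes of $f(\baset)$ and $f(\baseth)$ already use $N$-compatibility together with $\ker N$ in $D_{[\frac{1}{2},1]}$ being $\langle\baset',\baseth'\rangle$ and $N(f(\baseo))=f(\baseth)$. Coordinate-wise $\phi$-compatibility yields $\lambda=\lambda'$ (from $\baseth$, assuming $a_1\neq 0$), $\lambdat=\lambdat'$ together with $b_2=(\lambdat-\lambda)b_3$ (from $\baset$, assuming $b_2\neq 0$), and $b_2=(\lambdat-p\lambda)a_2$ together with $a_2=b_3+(p-1)\lambda a_3$ (from $\baseo$). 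Preservation of $\fil{2}=E(\baseo+\mflo\baseth)$ yields $a_2=a_1\mflo'$ and $a_3=a_1(\mflt'-\mflo)$, and preservation of the extra generator $\baset+\mflt\baseth$ of $\fil{1}$ yields $b_3=-a_1\mflt$.

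Equating the two expressions for $b_2$ and substituting the filtration relations produces, after dividing by $-a_1$, the first relation $(\lambda-\lambdat)\mflt=(\lambdat-p\lambda)\mflo'$; substituting the filtration relations into $a_2=b_3+(p-1)\lambda a_3$ and simplifying via the first relation produces the second, $(\lambdat-p\lambda)(\mflo-\mflt')=\mflt$. Invertibility of $f$ requires $\det f=a_1^3(\lambdat-p\lambda)\mflo'\neq 0$, so (using $\lambdat\neq p\lambda$ from the valuations) we need $\mflo'\neq 0$, and then $\mflt\neq 0$ follows from the first relation. For the converse, setting $a_1=1$ and defining the remaining entries by the formulas above yields an explicit isomorphism, and consistency of the overdetermined system is precisely what the two relations guarantee. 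The statement $\val\lambda=\frac12$, $\val\lambdat=1$ is immediate from the overlap of the valuation ranges defining the two families combined with $2\val\lambda+\val\lambdat=2$. The one thing to be careful about is accounting for \emph{every} $\phi$-compatibility relation; in particular, overlooking the one on $\baset$ (which gives the second expression $b_2=(\lambdat-\lambda)b_3$) would cost exactly the equation needed to derive the first of the two $\mfl$-invariant relations.
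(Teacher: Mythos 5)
Your proof is correct, and the core computation is the same as the paper's: write an isomorphism $T$ in matrix form, use $N$-compatibility to pin down the shape $T(\baseth)=a_1\baseth'$, $T(\baset)\in\langle\baset',\baseth'\rangle$, then $\phi$-compatibility and filtration preservation to extract the relations. Where you diverge is in how the preliminary facts are obtained. The paper's proof begins by invoking a result from~\cite{Park} (no isomorphisms among the $D^{i}_{\rnk N=1}$) to conclude a priori that an isomorphism can only occur on the $D^{20}_{\rnk N=1}$-part, forcing $\val\lambda=\frac12$ and $\mflt\neq0\neq\mflo'$ before doing any matrix arithmetic, and then derives $\lambda=\lambda'$, $\lambdat=\lambdat'$ from an abstract Jordan-form/eigenvalue argument. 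You instead read off $\lambda=\lambda'$ and $\lambdat=\lambdat'$ directly from $\phi$-compatibility on $\baseth$ and $\baset$ (using the already-determined shape of $T$, which kills any possible eigenvalue permutation), get $\val\lambda=\frac12$ from the overlap of the two parameter ranges, and get $\mflo'\neq0$ (hence $\mflt\neq0$) from $\det T=a_1^2b_2=a_1^3(\lambdat-p\lambda)\mflo'\neq0$. This makes your argument self-contained — it neither relies on the \cite{Park} citation nor on the eigenvalue-preservation shortcut — at the cost of being slightly more computational; the intermediate $\phi$-constraints you list ($b_2=(\lambdat-\lambda)b_3$, $b_2=(\lambdat-p\lambda)a_2$, $a_2=b_3+(p-1)\lambda a_3$) are exactly the relations the paper encodes by parameterizing $T$ with $(x,y)$, and both routes produce the identical pair of $\mfl$-invariant equations. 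Your extra paragraph on intra-family non-isomorphism is a bonus covering a remark the paper leaves to \cite{Park}.
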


\begin{proof}
We start the proof noting that there are no isomorphisms between $D_{\rnk N=1}^{4}$, $D_{\rnk N=1}^{6}$, $D_{\rnk N=1}^{10}$, $D_{\rnk N=1}^{12}$, $D_{\rnk N=1}^{17}$, $D_{\rnk N=1}^{18}$, and $D_{\rnk N=1}^{20}$ by a result in~\cite{Park}. Hence, the isomorphism between $D_{[0,\frac{1}{2}]}$ and $D_{[\frac{1}{2},1]}$ occurs only between the parts of $D_{[0,\frac{1}{2}]}$ and $D_{[\frac{1}{2},1]}$ on which $D_{\rnk N=1}^{20}$ is sitting, since the $\val\lambda=\frac{1}{2}$ part of $D_{\rnk N=1}^{20}$ is embedded both into $D_{[0,\frac{1}{2}]}$ and into $D_{[\frac{1}{2},1]}$. Hence, the isomorphism only occurs when $\val\lambda=\frac{1}{2}=\val{\lambda'}$ and $\mflt\not=0\not=\mflo'$.

Let $T$ be an isomorphism from $D_{[0,\frac{1}{2}]}(\lambda,\lambdat,\mflo,\mflt)$ to $D_{[\frac{1}{2},1]}(\lambda',\lambdat',\mflo',\mflt')$. Then $T$ preserves the Jordan form of the Frobenius maps and, in particular, their eigenvalues. Hence, $\lambda=\lambda'$ and $\lambdat=\lambdat'$. The commutativity with the monodromy operator $N$ forces that $T$ be of the form $T(\baseo)=a\baseo+b\baset+c\baseth$, $T(\baset)=d\baset+e\baseth$, and $T(\baseth)=a\baseth$. Then the commutativity with the Frobenius maps forces that $T$ be of the form $T(\baseo)=x\baseo+(\lambdat-\lambda)y\baset+y\baseth$, $T(\baset)=(\lambdat-p\lambda)(\lambdat-\lambda)y\baset+(\lambdat-p\lambda)y\baseth$, and $T(\baseth)=x\baseth$. Since $T$ preserves the filtration, we have $(\lambdat-p\lambda)y+\mflt x=0$, and $x\mflo'=(\lambdat-\lambda)y$, and $y+x\mflo =x\mflt'$, which forces $(\lambda-\lambdat)\mflt=(\lambdat-p\lambda)\mflo'$ and $(\lambdat-p\lambda)(\mflo-\mflt')=\mflt$.

It is easy to check that the converse holds. The association
\begin{equation}\label{association from 0 to 1}
\left\{
  \begin{array}{ll}
    \baseo\mapsto\baseo+\mflo'\baset-(\mflo-\mflt')\baseth & \hbox{} \\
    \baset\mapsto(\lambda-\lambdat)\mflt\baset-\mflt\baseth & \hbox{} \\
    \baseth\mapsto\baseth & \hbox{}
  \end{array}
\right.
\end{equation}
gives rise to an isomorphism from $D_{[0,\frac{1}{2}]}(\lambda,\lambdat,\mflo,\mflt)$ to $D_{[\frac{1}{2},1]}(\lambda,\lambdat,\mflo',\mflt')$.
\end{proof}

\begin{coro}
$D_{[0,\frac{1}{2}]}$ and $D_{[\frac{1}{2},1]}$ contain all of the $3$-dimensional irreducible semi-stable and non-crystalline representations of $G_{\QP}$ with Hodge--Tate weights $(0,1,2)$.
\end{coro}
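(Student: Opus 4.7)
The plan is to stitch together the classification from \cite{Park} with the two preceding propositions. By the classification cited at the start of the section, any $3$-dimensional irreducible semi-stable non-crystalline $E$-representation of $G_{\QP}$ with Hodge--Tate weights $(0,1,2)$ corresponds to an admissible filtered $(\phi,N)$-module isomorphic to one in the seven families
\[
D_{\rnk N=1}^{4},\ D_{\rnk N=1}^{6},\ D_{\rnk N=1}^{10},\ D_{\rnk N=1}^{12},\ D_{\rnk N=1}^{17},\ D_{\rnk N=1}^{18},\ D_{\rnk N=1}^{20}.
\]
Thus it suffices to show that each module appearing in these seven families is isomorphic to a $D_{[0,\frac{1}{2}]}(\lambda,\lambdat,\mflo,\mflt)$ or a $D_{[\frac{1}{2},1]}(\lambda,\lambdat,\mflo,\mflt)$.

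The next step is to confirm that the valuation of $\lambda$ is forced into $[0,1]$, so that the dichotomy $v_{p}(\lambda)\leq\frac{1}{2}$ versus $v_{p}(\lambda)\geq\frac{1}{2}$ is exhaustive. Since the Hodge number equals $0+1+2=3$ and the Newton number equals $v_{p}(p\lambda\cdot\lambdat\cdot\lambda)=1+2v_{p}(\lambda)+v_{p}(\lambdat)$, admissibility gives the relation $2v_{p}(\lambda)+v_{p}(\lambdat)=2$ already recorded in Examples~\ref{example D[0,1/2]} and~\ref{example D[1/2,1]}. Together with $\lambda,\lambdat\in\roi$, this yields $0\leq v_{p}(\lambda)\leq 1$, so every admissible parameter lies in $[0,\frac{1}{2}]\cup[\frac{1}{2},1]$.

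Finally, I would invoke Proposition~\ref{Prop-ad-filt-module[0,1/2]}, which handles $D_{\rnk N=1}^{10}$, $D_{\rnk N=1}^{12}$, $D_{\rnk N=1}^{18}$, $D_{\rnk N=1}^{20}$ in the range $v_{p}(\lambda)\in[0,\frac{1}{2}]$, and Proposition~\ref{Prop-ad-filt-module[1/2,1]}, which handles $D_{\rnk N=1}^{4}$, $D_{\rnk N=1}^{6}$, $D_{\rnk N=1}^{17}$, $D_{\rnk N=1}^{20}$ in the range $v_{p}(\lambda)\in[\frac{1}{2},1]$. Taken together, these cover all seven families over the entire admissible range of $v_{p}(\lambda)$, establishing the corollary.

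There is no real obstacle: the statement is a bookkeeping consequence of Propositions~\ref{Prop-ad-filt-module[0,1/2]} and~\ref{Prop-ad-filt-module[1/2,1]} combined with the classification in \cite{Park}. The only thing requiring any care is the boundary case $v_{p}(\lambda)=\frac{1}{2}$, where a single irreducible representation may admit descriptions in both families; this is harmless for the containment statement, and is precisely the overlap between $D_{[0,\frac{1}{2}]}$ and $D_{[\frac{1}{2},1]}$ identified in Proposition~\ref{Prop-ad-filt-module}.
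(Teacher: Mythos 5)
Your proof is correct and takes essentially the same approach as the paper's: cite the classification in \cite{Park} to reduce to the seven families, then apply Propositions~\ref{Prop-ad-filt-module[0,1/2]} and~\ref{Prop-ad-filt-module[1/2,1]}. The extra paragraph checking that admissibility forces $0\leq v_{p}(\lambda)\leq 1$ (so that the two intervals exhaust the parameter range) is a reasonable bit of explicit bookkeeping that the paper leaves implicit in the hypotheses of those propositions, but it does not change the argument's substance.
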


\begin{proof}
By a result of~\cite{Park}, the families $D_{\rnk N=1}^{4}$, $D_{\rnk N=1}^{6}$, $D_{\rnk N=1}^{10}$, $D_{\rnk N=1}^{12}$, $D_{\rnk N=1}^{17}$, $D_{\rnk N=1}^{18}$, and $D_{\rnk N=1}^{20}$ contain all the irreducible semi-stable and non-crystalline representations of $G_{\QP}$ with Hodge-Tate weights $(0,1,2)$. By Propositions~\ref{Prop-ad-filt-module[0,1/2]} and~\ref{Prop-ad-filt-module[1/2,1]}, the two families $D_{[0,\frac{1}{2}]}$ and $D_{[\frac{1}{2},1]}$ parameterize all of the $7$ families above.
\end{proof}

We end this section by noting that there are also a few families of admissible filtered $(\phi,N)$-modules containing only reducible semi-stable and non-crystalline representations of $G_{\QP}$ with Hodge--Tate weights $(0,1,2)$. See~\cite{Park} for details.

\section{Galois stable lattices of $D_{[0,\frac{1}{2}]}$}\label{sec-strong div mod0}
In this section, we construct strongly divisible modules of the modules $D_{[0,\frac{1}{2}]}$ in Example \ref{example D[0,1/2]}. We let $\mfd_{[0,\frac{1}{2}]}:=S\otimes_{\ZP}D_{[0,\frac{1}{2}]}$. In this section, we write $D$ and $\mfd$ for $D_{[0,\frac{1}{2}]}$ and $\mfd_{[0,\frac{1}{2}]}$ respectively for brevity. Since we are concerned only with absolutely irreducible mod $p$ reductions, we may assume $0<\val\lambda\leq\frac{1}{2}$ due to Proposition \ref{Prop-ad-filt-module[0,1/2]}.

It is easy to check that
$$\filo\mfd=S_{E}(\baseo+\mflo\baseth,\baset+\mflt\baseth)+\filo S_{E}\cdot\mfd;$$
$$\filt\mfd=S_{E}\big(\baseo+\mflo\baseth+\frac{u-p}{p}\baseth\big)+\filo S_{E}(\baset+\mflt\baseth)+\filt S_{E}\cdot\mfd.$$ (We omit their proofs.) So every element in $\filt\mfd$ is of the form $C_{0}(\baseo+\mflo\baseth+\frac{u-p}{p}\baseth)+C_{1}(u-p)(\baseo+\mflo\baseth) +C_{2}(u-p)(\baset+\mflt\baseth)+A\baseo+B\baset+C\baseth$, where $C_{0},C_{1},C_{2}$ are in $E$ and $A,B,C$ are in $\filt S_{E}$. We let $$\mathfrak{X}_{0}:=C_{0}\big(\baseo+\mflo\baseth+\frac{u-p}{p}\baseth\big)+(u-p)\big(C_{1}(\baseo+\mflo\baseth) +C_{2}(\baset+\mflt\baseth)\big),$$ which rearranges to $C_{0}\big(\baseo+\mflo\baseth\big)+(u-p)\big(C_{1}\baseo +C_{2}\baset+\frac{C_{0}+p\mflo C_{1}+p\mflt C_{2}}{p}\baseth\big)$.

We divide the area in which the parameters of $D$ are defined into $3$ pieces as follows: for $\lambda,\lambdat\in\roi$ with $0<\val{\lambda}\leq\frac{1}{2}$ and $2\val\lambda+\val\lambdat=2$ and for $\mflo,\mflt\in E$,
\begin{description}
\item[H(0,1)] $\val{\mflo-1}\geq 1-\val\lambda$ and $\val{\mflt+p\lambda}\geq\val{\lambda\lambdat}$;
\item[H(0,2)] $\val{\mflt+p\lambda}\geq\val{p(\mflo-1)}$ and $\val{\mflo-1}< 1-\val\lambda$;
\item[H(0,3)] $\val{\mflt+p\lambda}\leq\val{p(\mflo-1)}$ and $\val{\mflt+p\lambda}<\min\{\val{\lambdat(\mflo-1)},\val{\lambda\lambdat}\}$.
\end{description}
Note that the conditions $\mathbf{H(0,2)}$ and $\mathbf{H(0,3)}$ intersect in $\val{\mflt+p\lambda}=\val{p(\mflo-1)}$ and $\val{\mflo-1}<1-\val\lambda$ if $0<\val\lambda<\frac{1}{2}$. We also note that the condition $\val{\mflt+p\lambda}<\val{\lambdat(\mflo-1)}$ in $\mathbf{H(0,3)}$ matters only when $\val\lambda=\frac{1}{2}$, since $\val{\mflt+p\lambda}\leq\val{p(\mflo-1)}$ implies $\val{\mflt+p\lambda}<\val{\lambdat(\mflo-1)}$ if $0<\val{\lambda}<\frac{1}{2}$. We construct strongly divisible modules for each case in the following three subsections.

In this section, we let $\mathfrak{U}_{0}:=p\baseo+\frac{1}{\lambda}\baset+(\gamma+\mflo-1)\baseth$ for brevity.

\subsection{Strongly divisible modules: the first case}
In this subsection, we construct strongly divisible modules in $\mfd_{[0,\frac{1}{2}]}$ under the assumption $\mathbf{H(0,1)}$ as at the beginning of this section.

\begin{prop}\label{prop-D0first}
Keep the assumption $\mathbf{H(0,1)}$. Then $\mfm_{[0,\frac{1}{2}]}:=S_{\roi}(E_{1},E_{2},E_{3})$ is a strongly divisible module in $\mfd_{[0,\frac{1}{2}]}$, where
    $$\left\{
        \begin{array}{ll}
          E_{1}=\mathfrak{U}_{0} & \hbox{} \\
          E_{2}=\frac{1}{p\lambda}(\lambdat\baset+\lambda\mflt\baseth)-\lambda(\gamma-1)\baseth & \hbox{} \\
          E_{3}=\frac{p}{\lambda}\baseth. & \hbox{}
        \end{array}
      \right.$$
\end{prop}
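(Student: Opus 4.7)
The proof will verify each of the three defining conditions of a strongly divisible module in turn: the rank condition $\mfm_{[0,\frac{1}{2}]}[1/p]\simeq\mfd_{[0,\frac{1}{2}]}$, the stability of $\mfm_{[0,\frac{1}{2}]}$ under $\vphi$ and $N$, and the filtration condition $\vphi(\fil^{2}\mfm_{[0,\frac{1}{2}]})\subset p^{2}\mfm_{[0,\frac{1}{2}]}$.

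First I would observe that the matrix expressing $(E_{1},E_{2},E_{3})$ in terms of $(\baseo,\baset,\baseth)$ is lower triangular with nonzero diagonal entries $p,\ \lambdat/(p\lambda),\ p/\lambda$, so that $\{E_{1},E_{2},E_{3}\}$ is an $S_{E}$-basis of $\mfd_{[0,\frac{1}{2}]}$ and $\mfm_{[0,\frac{1}{2}]}$ is a free $S_{\roi}$-module of rank $3$ with $\mfm_{[0,\frac{1}{2}]}[1/p]\simeq\mfd_{[0,\frac{1}{2}]}$. Then I would compute $\vphi(E_{i})$ and $N(E_{i})$ directly using $\vphi=\vphi\otimes\phi$, $N=N\otimes\mathrm{Id}+\mathrm{Id}\otimes N$, the matrices of $\phi$ and $N$ on $D$, and the identities $\vphi(\gamma)\in p^{p-1}S$ and $N(\gamma)=-p[\gamma+(u-p)^{p-1}]$. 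I expect the result to be
\[
\vphi(E_{1}) = p\lambda E_{1} + pE_{2} + \alpha_{3}E_{3},\qquad \vphi(E_{2}) = \lambdat E_{2} + \beta_{3}E_{3},\qquad \vphi(E_{3}) = \lambda E_{3},
\]
together with $N(E_{1}) = \lambda[1-\gamma-(u-p)^{p-1}]E_{3}$, $N(E_{2}) = \lambda^{2}[\gamma+(u-p)^{p-1}]E_{3}$, and $N(E_{3})=0$. The integrality of $\alpha_{3},\beta_{3}\in S_{\roi}$ translates directly into the bounds $\val{\mflt+p\lambda}\geq\val{\lambda\lambdat}$ and $\val{\mflo-1}\geq 1-\val\lambda$ of $\mathbf{H(0,1)}$.

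For the filtration condition I would identify explicit $S_{\roi}$-generators of $\fil^{2}\mfm_{[0,\frac{1}{2}]}=\mfm_{[0,\frac{1}{2}]}\cap\fil^{2}\mfd$. Using the inductive description $\fil^{2}\mfd=\{x : N(x)\in\fil^{1}\mfd,\,f_{p}(x)\in\fil^{2}D\}$, I would impose both conditions on $X=aE_{1}+bE_{2}+cE_{3}$ and solve the resulting linear system in the values and derivatives of $a,b,c$ at $u=p$. The condition $\mathbf{H(0,1)}$ is precisely what ensures that the solution has $S_{\roi}$-coefficients and produces the $\fil^{2}$-primitive element
\[
\mathcal{E} = pE_{1} - \tfrac{p^{2}}{\lambdat}E_{2} + \Big(\lambda(p-1)(\mflo-1)+p\lambda+\tfrac{\lambda(\mflt+p\lambda)}{\lambdat}+\lambda(u-p)\Big)E_{3},
\]
together with a second, $(u-p)$-primitive generator $\mathcal{E}'' = \lambdat(u-p)(\baset+\mflt\baseth)$, which after re-expansion equals $p\lambda(u-p)E_{2}+(u-p)\big[\tfrac{\mflt\lambda(\lambdat-\lambda)}{p}+\lambda^{3}(\gamma-1)\big]E_{3}$ and lies in $\mfm_{[0,\frac{1}{2}]}$ again by $\mathbf{H(0,1)}$. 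A direct check would confirm $f_{p}(\mathcal{E})=p^{2}(\baseo+\mflo\baseth)$ and $N(\mathcal{E})\in\fil^{1}\mfd$. Cross-checking against the decomposition $\fil^{2}\mfd = S_{E}\cdot(\baseo+\mflo\baseth+(u-p)\baseth/p)+(u-p)S_{E}\cdot(\baset+\mflt\baseth)+\fil^{2}S_{E}\cdot\mfd$ would then give $\fil^{2}\mfm_{[0,\frac{1}{2}]}=S_{\roi}\mathcal{E}+S_{\roi}\mathcal{E}''+\fil^{2}S_{\roi}\cdot\mfm_{[0,\frac{1}{2}]}$.

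Finally, to establish $\vphi(\fil^{2}\mfm_{[0,\frac{1}{2}]})\subset p^{2}\mfm_{[0,\frac{1}{2}]}$ I would expand $\vphi(\mathcal{E})$ and $\vphi(\mathcal{E}'')$ using the formulas above together with $\vphi(u-p)=u^{p}-p\equiv p(\gamma-1)\pmod{p^{2}S}$. The main obstacle is the cancellation in $\vphi(\mathcal{E})$: the expansion produces many cross terms in $\mflo-1$, $\mflt+p\lambda$, $\gamma$, and $\vphi(\gamma)$, and under $\mathbf{H(0,1)}$ I expect all of them to cancel identically except the leading $p^{2}\lambda E_{1}$ and a surviving $\vphi(\gamma)$-term lying in $p^{p-1}S_{\roi}\subset p^{2}S_{\roi}$ (using $p\geq 5$). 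For $\vphi(\mathcal{E}'')$, the factor $u^{p}-p\in pS$ together with cancellation of the $\mflt$-contributions gives $\vphi(\mathcal{E}'')\in p^{2}\mfm_{[0,\frac{1}{2}]}$ by valuation, and for $\fil^{2}S_{\roi}\cdot\mfm_{[0,\frac{1}{2}]}$ divisibility follows from the standard fact $\vphi(\fil^{2}S)\subset p^{2}S$. The bounds in $\mathbf{H(0,1)}$ are calibrated precisely to make these cancellations work; when they fail, the element $\mathcal{E}$ either leaves $\mfm_{[0,\frac{1}{2}]}$ or ceases to satisfy the $p^{2}$-divisibility condition, which is exactly why the parameter region is partitioned into the three cases $\mathbf{H(0,1)}$, $\mathbf{H(0,2)}$, $\mathbf{H(0,3)}$ and treated with different bases in the following subsections.
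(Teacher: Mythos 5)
Your overall strategy for the $\varphi$- and $N$-stability is fine and your formulas $\varphi(E_1)=p\lambda E_1+pE_2+\alpha_3E_3$, $\varphi(E_2)=\lambdat E_2+\beta_3E_3$, $\varphi(E_3)=\lambda E_3$, $N(E_1)=\lambda[1-\gamma-(u-p)^{p-1}]E_3$, $N(E_2)=\lambda^2[\gamma+(u-p)^{p-1}]E_3$, $N(E_3)=0$ check out; this is a more explicit version of what the paper records as a routine congruence modulo $\maxi{E}\mfm$.

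The filtration step, however, has a real gap. You want to replace the paper's constraint-based parametrization of $\fil^2\mfm$ with two explicit generators, but your proposed $\mathcal{E}$ and $\mathcal{E}''$ are not $\fil^2$-primitive and, more seriously, do not span. In the coordinates $(C_0,C_1,C_2)$ that the paper uses for $\fil^2\mfd/\fil^2S_E\mfd$, the module $\fil^2\mfm/\fil^2S_{\roi}\mfm$ is the lattice cut out by $\val{C_0}\ge 2-\val\lambda$, $\val{C_1}\ge 1$, and $\val{C_1-p\lambda C_2}\ge\val\lambdat$, which is generated over $\roi$ by (approximately) $(\lambda\lambdat,0,0)$, $(0,p,\tfrac{1}{\lambda})$, and $(0,0,\tfrac{\lambdat}{p\lambda})$. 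Your $\mathcal{E}$ has $(C_0,C_1,C_2)=(p^2,0,0)$ and your $\mathcal{E}''$ has $(0,0,\lambdat)$, and $S_{\roi}\mathcal{E}+S_{\roi}\mathcal{E}''+\fil^2S_{\roi}\mfm$ only reaches the strictly smaller lattice $\roi(p^2,0,0)+\roi(0,p^2,0)+\roi(0,0,\lambdat)$: you never hit $\val{C_0}$ as small as $2-\val\lambda$, never hit $\val{C_1}=1$, and in particular have no generator at all with $C_1\ne 0$ of the right size (the $(u-p)$-multiple of $\mathcal{E}$ only produces $C_1$ with $\val{C_1}\ge 2$). Therefore checking $\varphi(\mathcal{E})$ and $\varphi(\mathcal{E}'')$ in $p^2\mfm$ does not prove $\varphi(\fil^2\mfm)\subset p^2\mfm$ unless $\val\lambda=0$, which is excluded. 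The cross-check you defer to at the end of that paragraph would in fact reveal the discrepancy. To repair the argument you need a third generator in the direction $(u-p)(\baseo+\mflo\baseth)$, and you need to scale the other two down by $\tfrac{p^2}{\lambda\lambdat}$ and $\tfrac{p\lambda}{1}$ respectively; at that point the $p^2$-divisibility check becomes genuinely tight and uses $\mathbf{H(0,1)}$ in exactly the way the paper's inequality $(4.1)$--$(4.2)$ bookkeeping does.
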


\begin{proof}
During the proof, we write $\mfm$ for $\mfm_{[0,\frac{1}{2}]}$ for brevity. It is routine to check that $\vphi(E_{1})\equiv\vphi(E_{2})\equiv\vphi(E_{3})\equiv N(\Baseo)\equiv N(\Baset)\equiv N(\Baseth)\equiv 0$ modulo $\maxi{E}\mfm$, using the fact $\vphi(\gamma)\in p^{p-1}S$ and $N(\gamma)=-p[\gamma+(u-p)^{p-1}]$. Hence, $\mfm$ is stable under $\vphi$ and $N$.

We check that $\vphi(\filt \mfm)\subset p^{2}\mfm$. It is easy to check that $\filt S\cdot\mfm\subset\filt\mfm$, and so it is enough to check $\vphi(\mathfrak{X}_{0})\in p^{2}\mfm$, since $\vphi(\filt S\cdot\mfm)\subset p^{2}\mfm$. We first compute  $\filt \mfm$ modulo $\filt S\cdot\mfm$, by finding out the conditions that $\mathfrak{X}_{0}$ be in $\mfm$. $\mathfrak{X}_{0}\equiv C_{0}\Big{(}\frac{1}{p}E_{1}-\frac{1}{\lambdat}\Baset+\frac{\lambda(\mflt+p\lambda)}{p^{2}\lambdat}\Baseth -\frac{\lambda(\mfl-1)}{p^{2}}\Baseth+\frac{\lambda\mflo}{p}\Baseth \Big{)}+ (u-p)\Big{(}\frac{C_{1}}{p}E_{1}- \frac{C_{1}-p\lambda C_{2}}{\lambdat}E_{2}+ \frac{V}{W}E_{3}\Big{)}$ modulo $\filt S\cdot\mfm$, where $W=p^{2}\lambdat$ and
\begin{equation*}
V=\lambda\lambdat(C_{0}+p\mflo C_{1}+p\mflt C_{2})+ \lambda(\mflt+p\lambda)C_{1}- \lambda\lambdat(\mflo-1)C_{1}-p\lambda^{2}(\mflt+p\lambda)C_{2}.
\end{equation*}
Hence, if $\mathfrak{X}_{0}\in\filt \mfm$, then we get $\val{C_{0}}\geq\val{\lambdat}$, $\val{C_{1}}\geq1$,
\begin{equation}\label{D0-first}
\val{C_{1}-p\lambda C_{2}}\geq\val{\lambdat},
\end{equation}
and
\begin{equation}\label{D0-second}
\val{V}\geq \val{W}.
\end{equation}
The inequality (\ref{D0-first}) with $\val{C_{1}}\geq1$ implies $$\val{C_{2}}\geq\min\{\val{C_{1}},\val{\lambdat}\}-\val{p\lambda}\geq-\val\lambda,$$
and one can easily check that $V\equiv\lambda\lambdat C_{0}+\lambda(\mflt+p\lambda)(C_{1}-p\lambda C_{2})-\lambda\lambdat(\mflo-1)C_{1}$ modulo $(W)$. Then the inequality (\ref{D0-first}) with $\val{C_{1}}\geq1$ implies $V\equiv \lambda\lambdat C_{0}$ modulo $(W)$. Hence, we get $$\val{C_{0}}\geq2-\val\lambda$$ from the inequality (\ref{D0-second}).

Finally, we check $\vphi(\mathfrak{X}_{0})\in p^{2}\mfm$. Using the fact $\vphi(\frac{u-p}{p})\equiv\gamma-1$ modulo $pS$, $\vphi(\mathfrak{X}_{0})\equiv\lambda C_{0}\Baseo+p\lambda C_{1}(\gamma-1)\Big(\Baseo-\frac{\lambda(\gamma-1)}{p}\Baseth\Big)+pC_{2}(\gamma-1) \Big(p\lambda\Baset+\lambda^{3}(\gamma-1)\Baseth\Big)$ modulo $p^{2}\mfm$. Since $\val{C_{0}}\geq2-\val\lambda$, $\val{C_{1}}\geq1$, and $\val{C_{2}}\geq-\val\lambda$, $\vphi(\mathfrak{X}_{0})\equiv-\lambda^{2}(C_{1}-p\lambda C_{2})(\gamma-1)^{2}\Baseth$ modulo $p^{2}\mfm$. Then $\vphi(\mathfrak{X}_{0})\equiv0$ modulo $p^{2}\mfm$ by the inequality (\ref{D0-first}). Thus $\vphi(\filt \mfm)\subset p^{2}\mfm$.
\end{proof}

\subsection{Strongly divisible modules: the second case}\label{subsec-D0second}
In this subsection, we construct strongly divisible modules in $\mfd_{[0,\frac{1}{2}]}$ under the assumption $\mathbf{H(0,2)}$ as at the beginning of this section.

We first define two sequences $G_{m}$ and $H_{m}$ for $m\geq0$ recursively as follows: $G_{0}=H_{0}=1$ and
$$G_{m+1}=(\mflo-1)[(\mflo-1)H_{m}-\frac{\lambdat}{\lambda}G_{m}]^{2};$$ $$H_{m+1}=G_{m+1}-\frac{1}{\lambda}\big([(\mflt+p\lambda)-\lambdat(\mflo-1)][(\mflo-1)H_{m}-\frac{\lambdat}{\lambda}G_{m}]+p\lambdat G_{m}\big)G_{m}.$$
We prove that the sequence $G_{m}/H_{m}$ converses in $1+\maxi{E}$. The limit appears in the coefficients of the strongly divisible modules in Proposition \ref{prop-D0second}.

\begin{lemm}
Keep the assumption $\mathbf{H(0,2)}$. Then, for $m\geq0$,
\begin{enumerate}
\item $\val{G_{m}-H_{m}}-\val{G_{m}}\geq\min\big\{\Val{\frac{1}{\lambda}[(\mflt+p\lambda)-\lambdat(\mflo-1)]}-2\val{\mflo-1}, 3[\val{\frac{p}{\lambda}}-\val{\mflo-1}]\big\}>0$;
\item $\val{(\mflo-1)H_{m}-\frac{\lambdat}{\lambda}G_{m}}=\val{\mflo-1}+\val{H_{m}}$;
\item $\val{G_{m}H_{m+1}-H_{m}G_{m+1}}-\val{H_{m}H_{m+1}}\geq(m+1)\min \big\{\Val{\frac{1}{\lambda}[(\mflt+p\lambda)-\lambdat(\mflo-1)]}-2\val{\mflo-1},3[\val{\frac{p}{\lambda}}-\val{\mflo-1}]\big\}$.
\end{enumerate}
\end{lemm}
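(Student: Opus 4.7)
The plan is to prove (1), (2), (3) by joint induction on $m$. Parts (1) and (2) at $m+1$ are deduced from their truth at $m$ directly via the recursion, and (3) is obtained from (1), (2) via a $p$-adic contraction argument on the ratio $u_m := D_m/G_m$ with $D_m := G_m - H_m$. Throughout I abbreviate $A_m := (\mflo-1)H_m - \frac{\lambdat}{\lambda}G_m$, $A_0 := (\mflo-1) - \frac{\lambdat}{\lambda}$, $B := (\mflt+p\lambda) - \lambdat(\mflo-1)$, $\nu := -\frac{(\mflo-1)B}{\lambda}$, and write $\delta$ for the minimum appearing in the statement. Under $\mathbf{H(0,2)}$ (together with $\val{\lambda}\leq\frac{1}{2}$) a direct estimate shows $\delta>0$, $\val{D_1}\geq 3\val{\mflo-1}+\delta$, and $\val{\nu}\geq 3\val{\mflo-1}+\delta$.

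\textbf{Base case and inductive step for (1), (2).} For $m=0$, (1) is vacuous and (2) reduces to $\val{(\mflo-1)-\lambdat/\lambda}=\val{\mflo-1}$, which follows from the strict inequality $\val{\mflo-1}<1-\val{\lambda}\leq 2-3\val{\lambda}=\val{\lambdat/\lambda}$. For the inductive step, assume (1), (2) at $m$; then $\val{H_m}=\val{G_m}$ and $\val{A_m}=\val{\mflo-1}+\val{G_m}$, so $\val{G_{m+1}}=3\val{\mflo-1}+2\val{G_m}$. The identity $H_{m+1}-G_{m+1}=-\frac{G_m}{\lambda}(BA_m+p\lambdat G_m)$ combined with a term-by-term estimate on $\val{BA_m+p\lambdat G_m}$ yields $\val{H_{m+1}-G_{m+1}}-\val{G_{m+1}}\geq\min\{\val{B/\lambda}-2\val{\mflo-1}, 3(\val{p/\lambda}-\val{\mflo-1})\}=\delta$, proving (1) at $m+1$. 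Decomposing $A_{m+1}=(\mflo-1)(H_{m+1}-G_{m+1})+A_0 G_{m+1}$, the first summand has valuation $\geq\val{\mflo-1}+\val{G_{m+1}}+\delta$, strictly exceeding $\val{A_0 G_{m+1}}=\val{\mflo-1}+\val{G_{m+1}}$, so $\val{A_{m+1}}=\val{\mflo-1}+\val{H_{m+1}}$, proving (2).

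\textbf{Reduction and contraction for (3).} By (1), $\val{u_m}\geq\delta$ and $\val{H_m H_{m+1}}=\val{G_m G_{m+1}}$; the identity $G_m H_{m+1}-H_m G_{m+1}=G_m G_{m+1}(u_m-u_{m+1})$ reduces (3) to $\val{u_m-u_{m+1}}\geq(m+1)\delta$. Using $A_m=A_0 G_m-(\mflo-1)D_m$ the recursion simplifies to $D_{m+1}=D_1 G_m^2+\nu G_m D_m$, equivalently $u_{m+1}=f(u_m)$ where $f(u):=(D_1+\nu u)/[(\mflo-1)(A_0-(\mflo-1)u)^2]$. A direct polynomial computation gives
\[
f(u)-f(v)=\frac{(u-v)\bigl[2(\mflo-1)A_0 D_1+\nu A_0^2-(\mflo-1)^2 D_1(u+v)-(\mflo-1)^2\nu uv\bigr]}{(\mflo-1)(A_0-(\mflo-1)u)^2(A_0-(\mflo-1)v)^2}.
\]
For $\val{u},\val{v}\geq\delta$, the denominator has valuation $5\val{\mflo-1}$, while each bracket summand has valuation $\geq 5\val{\mflo-1}+\delta$, so $\val{f(u)-f(v)}\geq\val{u-v}+\delta$. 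Induction from the base case $\val{u_1-u_0}=\val{u_1}=\val{D_1/G_1}\geq\delta$ (which follows from (1) at $m=1$) then gives $\val{u_{m+1}-u_m}\geq\val{u_m-u_{m-1}}+\delta\geq(m+1)\delta$, finishing (3).

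\textbf{Main obstacle.} The delicate step is the uniform contraction bound $\val{f(u)-f(v)}\geq\val{u-v}+\delta$. Each of the two ``leading'' numerator terms $2(\mflo-1)A_0 D_1$ and $\nu A_0^2$ must be shown to have valuation $\geq 5\val{\mflo-1}+\delta$: this uses the bound $\val{D_1}\geq 3\val{\mflo-1}+\delta$ (from (1) at $m=1$) and the bound $\val{B/\lambda}\geq 2\val{\mflo-1}+\delta$, the latter being precisely one of the two clauses in the definition of $\delta$ under $\mathbf{H(0,2)}$. One must also confirm that the remaining numerator terms (involving $u,v$) and the denominator expansion have the correct valuations, which follows from $\val{u},\val{v}\geq\delta$ and $\val{A_0-(\mflo-1)u}=\val{A_0}=\val{\mflo-1}$.
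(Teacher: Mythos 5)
Your proof is correct; parts (1) and (2) follow the same line as the paper (joint induction with the valuation estimate $\val{D_{m+1}}-\val{G_{m+1}}\geq\min\{\val{B/\lambda}-2\val{\mflo-1},\val{p\lambdat/\lambda}-3\val{\mflo-1}\}$, using $\val{p\lambdat/\lambda}=3\val{p/\lambda}$), but part (3) takes a genuinely different route. The paper verifies, by a lengthy direct expansion, the algebraic identity
\[
G_m H_{m+1}-H_m G_{m+1}=(W_1+W_2)(G_{m-1}H_m-H_{m-1}G_m),
\]
with $W_1,W_2$ explicit, and then estimates $\val{W_1},\val{W_2}$ term by term before running the induction. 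You instead change variables to the ratio $u_m=D_m/G_m$, reduce the cross-difference to $G_mG_{m+1}(u_m-u_{m+1})$, observe that the recursion is the iteration $u_{m+1}=f(u_m)$ of the rational map $f(u)=(D_1+\nu u)/[(\mflo-1)(A_0-(\mflo-1)u)^2]$, and prove $f$ contracts on $\{\val{u}\geq\delta\}$ with modulus $\delta$ via the explicit factorization of $f(u)-f(v)$. I have checked the reduction $D_{m+1}=D_1G_m^2+\nu G_mD_m$, the formula for $f(u)-f(v)$, and the valuation estimates $\val{D_1},\val{\nu}\geq 3\val{\mflo-1}+\delta$ and $\val{A_0-(\mflo-1)u}=\val{\mflo-1}$; they are all correct. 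Your version makes the mechanism behind the convergence of $G_m/H_m$ conceptually transparent (it is a $p$-adic fixed-point iteration of a contraction), at the cost of introducing the auxiliary rational map; the paper's verification is more elementary in spirit but requires carrying through a much longer expansion. Both yield the same quantitative bound $(m+1)\delta$.
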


\begin{proof}
(2) is immediate from (1) since (1) implies $\val{G_{m}}=\val{H_{m}}$ for all $m\geq0$ and since $\val{\mflo-1}<\val{\frac{p}{\lambda}}\leq\val{\frac{\lambdat}{\lambda}}$.

We prove (1) by induction. For $m=0$, it is trivial. Assume that (1) is true for $m$. Then $\val{G_{m}}=\val{H_{m}}$ and $\val{(\mflo-1)H_{m}-\frac{\lambdat}{\lambda}G_{m}}=\val{(\mflo-1)H_{m}}$. Hence, we have $\val{G_{m+1}}=3\val{\mflo-1}+2\val{H_{m}}$ and
$$
\begin{aligned}
\val{&G_{m+1}-H_{m+1}}-\val{G_{m+1}}\\
=&\Val{\frac{1}{\lambda}\big([(\mflt+p\lambda)-\lambdat(\mflo-1)][(\mflo-1)H_{m}-\frac{\lambdat}{\lambda}G_{m}]+p\lambdat G_{m}\big)G_{m}}-\val{G_{m+1}}\\
\geq&\min\big\{\Val{\frac{1}{\lambda}[(\mflt+p\lambda)-\lambdat(\mflo-1)](\mflo-1)H_{m}G_{m}},\val{\frac{p\lambdat}{\lambda} G_{m}^{2}}\big\}-\val{G_{m+1}}\\
=&\min\big\{\Val{\frac{1}{\lambda}[(\mflt+p\lambda)-\lambdat(\mflo-1)]}-2\val{\mflo-1}, \val{\frac{p\lambdat}{\lambda}}-3\val{\mflo-1}\big\}>0.
\end{aligned}
$$
Hence, (1) holds by induction.

For (3), We induct on $m$ as well. If $m=0$, then $G_{0}H_{1}-H_{0}G_{1}=H_{1}-G_{1}=- \frac{1}{\lambda}\big([(\mflt+p\lambda)-\lambdat(\mflo-1)][(\mflo-1)-\frac{\lambdat}{\lambda}]+ p\lambdat\big)$. So it works for $m=0$.

We claim the following identity: for $m\geq1$,
$$G_{m}H_{m+1}-H_{m}G_{m+1}=(W_{1}+W_{2})(G_{m-1}H_{m}-H_{m-1}G_{m}),$$
where $W_{1}=\frac{1}{\lambda}[(\mflt+p\lambda)-\lambdat(\mflo-1)](\mflo-1)^{2}[(\mflo-1)H_{m-1}-\frac{\lambdat}{\lambda}G_{m-1}]
 [(\mflo-1)H_{m}-\frac{\lambdat}{\lambda}G_{m}]$ and
$W_{2}=\frac{p\lambdat}{\lambda}(\mflo-1)^{2}\big([(\mflo-1)H_{m-1}-\frac{\lambdat}{\lambda}G_{m-1}]G_{m}+ G_{m-1}[(\mflo-1)H_{m}-\frac{\lambdat}{\lambda}G_{m}]\big)$.

Indeed, $G_{m}H_{m+1}-H_{m}G_{m+1}=G_{m}(H_{m+1}-G_{m+1})+(G_{m}-H_{m})G_{m+1}= - \frac{1}{\lambda}\big([(\mflt+p\lambda)-\lambdat(\mflo-1)][(\mflo-1)H_{m}-\frac{\lambdat}{\lambda}G_{m}]+p\lambdat G_{m}\big)G_{m}^{2}+ \frac{1}{\lambda}\big([(\mflt+p\lambda)-\lambdat(\mflo-1)][(\mflo-1)H_{m-1}-\frac{\lambdat}{\lambda}G_{m-1}]+p\lambdat G_{m-1}\big)G_{m-1}G_{m+1}=  \frac{1}{\lambda}[(\mflt+p\lambda)-\lambdat(\mflo-1)]\big([(\mflo-1)H_{m-1}-\frac{\lambdat}{\lambda}G_{m-1}]G_{m-1}G_{m+1}- [(\mflo-1)H_{m}-\frac{\lambdat}{\lambda}G_{m}]G_{m}^{2}\big)+ \frac{p\lambdat}{\lambda}(G_{m-1}^{2}G_{m+1}-G_{m}^{3})$. However, $[(\mflo-1)H_{m-1}-\frac{\lambdat}{\lambda}G_{m-1}]G_{m-1}G_{m+1}- [(\mflo-1)H_{m}-\frac{\lambdat}{\lambda}G_{m}]G_{m}^{2}= (\mflo-1)[(\mflo-1)H_{m-1}-\frac{\lambdat}{\lambda}G_{m-1}][(\mflo-1)H_{m}-\frac{\lambdat}{\lambda}G_{m}] \big(G_{m-1}[(\mflo-1)H_{m}-\frac{\lambdat}{\lambda}G_{m}]-G_{m}[(\mflo-1)H_{m-1}-\frac{\lambdat}{\lambda}G_{m-1}]\big)= (\mflo-1)^{2}[(\mflo-1)H_{m-1}-\frac{\lambdat}{\lambda}G_{m-1}][(\mflo-1)H_{m}-\frac{\lambdat}{\lambda}G_{m}] (G_{m-1}H_{m}-H_{m-1}G_{m})$ and $G_{m-1}^{2}G_{m+1}-G_{m}^{3}=(\mflo-1)\big(G_{m-1}^{2}[(\mflo-1)H_{m}-\frac{\lambdat}{\lambda}G_{m}]^{2}- G_{m}^{2}[(\mflo-1)H_{m-1}-\frac{\lambdat}{\lambda}G_{m-1}]^{2}\big)= (\mflo-1)\big(G_{m-1}[(\mflo-1)H_{m}-\frac{\lambdat}{\lambda}G_{m}]+ [(\mflo-1)H_{m-1}-\frac{\lambdat}{\lambda}G_{m-1}]G_{m}\big) \big(G_{m-1}[(\mflo-1)H_{m}-\frac{\lambdat}{\lambda}G_{m}]- [(\mflo-1)H_{m-1}-\frac{\lambdat}{\lambda}G_{m-1}]G_{m}\big)=(\mflo-1)^{2}\big(G_{m-1}[(\mflo-1)H_{m}- \frac{\lambdat}{\lambda}G_{m}]+ [(\mflo-1)H_{m-1}-\frac{\lambdat}{\lambda}G_{m-1}]G_{m}\big)(G_{m-1}H_{m}-H_{m-1}G_{m})$. Hence, we proved the identity.

From the identity, we have  $\val{G_{m}H_{m+1}-H_{m}G_{m+1}}\geq\min\{\val{W_{1}},\val{W_{2}}\}+\val{G_{m-1}H_{m}-H_{m-1}G_{m}}$. By parts (1) and (2), $\val{W_{1}}=\Val{\frac{1}{\lambda}[(\mflt+p\lambda)-\lambdat(\mflo-1)]}+4\val{\mflo-1}+\val{H_{m-1}H_{m}}$ and $\val{W_{2}}=\val{\frac{p\lambdat}{\lambda}}+3\val{\mflo-1}+\val{H_{m-1}H_{m}}$. Thus, we have $\val{G_{m}H_{m+1}-H_{m}G_{m+1}}\geq \min\big\{\Val{\frac{1}{\lambda}[(\mflt+p\lambda)-\lambdat(\mflo-1)]}-2\val{\mflo-1}, 3[\val{\frac{p}{\lambda}}-\val{\mflo-1}]\big\}+ 6\val{\mflo-1}+\val{H_{m-1}H_{m}}+ m\min \big\{\Val{\frac{1}{\lambda}[(\mflt+p\lambda)-\lambdat(\mflo-1)]}-2\val{\mflo-1},3[\val{\frac{p}{\lambda}}-\val{\mflo-1}]\big\} +\val{H_{m-1}H_{m}}$ by induction hypothesis. Hence, it holds by induction.
\end{proof}

The assumption $\mathbf{H(0,2)}$ implies that the quantities in the set of the part (3) of the lemma above are strictly positive. So the part (3) of the lemma says that $\val{G_{m}H_{m+1}-H_{m}G_{m+1}}-\val{H_{m}H_{m+1}}$ approaches $\infty$ as $m$ goes to $\infty$. That is, the sequence $G_{m}/H_{m}$ is Cauchy. We let $$\Delta_{\tiny{\ref{subsec-D0second}}}:=\lim_{m\mapsto\infty}\frac{G_{m}}{H_{m}}$$ Note that $\Delta_{\tiny{\ref{subsec-D0second}}}$ depends on the values of the parameters $\lambda,\lambdat,\mflo,\mflt$.

The part (1) of the lemma implies that $\val{G_{m}}=\val{H_{m}}$ for all $m\geq0$ and
\begin{multline*}
\val{1-\Delta_{\tiny{\ref{subsec-D0second}}}}\geq\min\big\{3[\val{\frac{p}{\lambda}}-\val{\mflo-1}],\\ \Val{\frac{1}{\lambda}[(\mflt+p\lambda)-\lambdat(\mflo-1)]}-2\val{\mflo-1}\big\}>0,
\end{multline*}
which immediately implies that $$\frac{\lambda(\mflo-1)-\lambdat\Delta_{\tiny{\ref{subsec-D0second}}}}{\lambda(\mflo-1)}\in 1+\maxi{E}.$$ In particular, $\Val{\lambda(\mflo-1)-\lambdat\Delta_{\tiny{\ref{subsec-D0second}}}}=\Val{\lambda(\mflo-1)}$.

It is also easy to check that $\Delta_{\tiny{\ref{subsec-D0second}}}$ satisfies the equation
\begin{multline} \label{D0-third}
(\mflo-1)[\lambda(\mflo-1)-\lambdat\Delta_{\tiny{\ref{subsec-D0second}}}]^{2}(1-\Delta_{\tiny{\ref{subsec-D0second}}})+\\
\big([(\mflt+p\lambda)-\lambdat(\mflo-1)][\lambda(\mflo-1)-\lambdat\Delta_{\tiny{\ref{subsec-D0second}}}] +p\lambda\lambdat\Delta_{\tiny{\ref{subsec-D0second}}}\big)\Delta_{\tiny{\ref{subsec-D0second}}}^{2}=0,
\end{multline}
by taking the limits of $G_{m+1}/H_{m+1}=(\mflo-1)[(\mflo-1)H_{m}-\frac{\lambdat}{\lambda}G_{m}]^{2}/ \big[(\mflo-1)[(\mflo-1)H_{m}-\frac{\lambdat}{\lambda}G_{m}]^{2}- \frac{1}{\lambda}\big([(\mflt+p\lambda)-\lambdat(\mflo-1)][(\mflo-1)H_{m}-\frac{\lambdat}{\lambda}G_{m}]+p\lambdat G_{m}\big)G_{m}\big]$. The equation plays a crucial role in the proof of the following proposition.

\begin{prop}\label{prop-D0second}
Keep the assumption $\mathbf{H(0,2)}$. Then $\mfm_{[0,\frac{1}{2}]}:=S_{\roi}(E_{1},E_{2},E_{3})$ is a strongly divisible module in $\mfd_{[0,\frac{1}{2}]}$, where
    $$\left\{
        \begin{array}{ll}
          E_{1}=\mathfrak{U}_{0}+\frac{p \Delta_{\tiny{\ref{subsec-D0second}}}(\gamma-1)}{\lambda[\lambda(\mflo-1)-\lambdat\Delta_{\tiny{\ref{subsec-D0second}}}]} \left(\frac{\lambda(\mflo-1)-\lambdat\Delta_{\tiny{\ref{subsec-D0second}}}}{p\lambda(\mflo-1)} (\lambdat\baset+\lambda\mflt\baseth)-\lambda^{2}(\gamma-1)\baseth\right)& \hbox{} \\
          E_{2}=\frac{\lambda(\mflo-1)-\lambdat\Delta_{\tiny{\ref{subsec-D0second}}}}{p\lambda(\mflo-1)} (\lambdat\baset+\lambda\mflt\baseth)-\lambda^{2}(\gamma-1)\baseth & \hbox{} \\
          E_{3}=[\lambda(\mflo-1)-\lambdat\Delta_{\tiny{\ref{subsec-D0second}}}]\baseth. & \hbox{}
        \end{array}
      \right.$$
\end{prop}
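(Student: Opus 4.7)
The argument follows the three-step template of Proposition~\ref{prop-D0first}, with the defining equation~(\ref{D0-third}) for $\Delta_{\tiny{\ref{subsec-D0second}}}$ providing the decisive cancellation.

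First, I would check that $\mfm:=\mfm_{[0,\frac{1}{2}]}$ is a free $S_{\roi}$-module of rank~$3$ with $\mfm[\tfrac{1}{p}]\simeq\mfd$ and is stable under $\vphi$ and $N$. The freeness and base-change assertion amount to checking that the determinant of the change of basis from $(\baseo,\baset,\baseth)$ to $(E_{1},E_{2},E_{3})$ is nonzero in $S_{E}$, which follows from the fact (recorded before~(\ref{D0-third})) that $\lambda(\mflo-1)-\lambdat\Delta_{\tiny{\ref{subsec-D0second}}}$ is a unit multiple of $\lambda(\mflo-1)$. Stability under $\vphi$ and $N$ is a routine if lengthy computation using $\vphi(\gamma)\in p^{p-1}S$, $N(\gamma)=-p[\gamma+(u-p)^{p-1}]$, and the matrix representations of $\phi$ and $N$ on $D$; the denominators $\lambda$, $\lambdat$, and $\lambda(\mflo-1)-\lambdat\Delta_{\tiny{\ref{subsec-D0second}}}$ that appear are all controlled by the valuation estimates established earlier in this subsection.

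Second, since $\filt S_{\roi}\cdot\mfm\subset\filt\mfm$ and $\vphi(\filt S_{\roi})\subset p^{2}S_{\roi}$, it suffices to verify $\vphi(\mathfrak{X}_{0})\in p^{2}\mfm$ for the generic element $\mathfrak{X}_{0}$ of $\filt\mfd$ introduced at the start of Section~\ref{sec-strong div mod0}. I would compute $\filt\mfm$ modulo $\filt S_{\roi}\cdot\mfm$ by expressing $\mathfrak{X}_{0}$ in the basis $(E_{1},E_{2},E_{3})$; the change-of-basis calculation is bulkier than in Proposition~\ref{prop-D0first} because of the extra $\Delta_{\tiny{\ref{subsec-D0second}}}$-correction in $E_{1}$ and the nontrivial mixing of $\baset$ and $\baseth$ in $E_{2}$, but imposing that each coordinate lie in the appropriate filtered piece of $S_{\roi}$ should yield valuation bounds on $C_{0},C_{1},C_{2}$ of the same shape as those in Proposition~\ref{prop-D0first}, adjusted by factors that are units under $\mathbf{H(0,2)}$.

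Finally, I would apply $\vphi$ to $\mathfrak{X}_{0}$, using $\vphi(\tfrac{u-p}{p})\equiv\gamma-1$ modulo $pS$, and verify $\vphi(\mathfrak{X}_{0})\equiv 0$ modulo $p^{2}\mfm$. After collecting powers of $(\gamma-1)$, the potentially obstructive coefficient of the $(\gamma-1)^{2}\baseth$-term should assemble, up to the unit $\lambda(\mflo-1)-\lambdat\Delta_{\tiny{\ref{subsec-D0second}}}$, into precisely the left-hand side of~(\ref{D0-third}), and therefore vanish \emph{exactly} because $\Delta_{\tiny{\ref{subsec-D0second}}}$ was constructed as the limit of the sequence whose fixed-point relation is~(\ref{D0-third}). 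This identity is the heart of the proof and the main obstacle: with any coefficient other than $\Delta_{\tiny{\ref{subsec-D0second}}}$ in place, the obstruction survives and no strongly divisible module of this shape can exist; it is also what forces the coefficients of $\mfm$ to be defined as a limit rather than as a closed-form expression in $E$. The remaining linear $(\gamma-1)$-terms are controlled by the valuation bounds on $C_{0},C_{1},C_{2}$ together with $\mathbf{H(0,2)}$.
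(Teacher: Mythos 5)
Your outline follows the same three-step template as the paper's proof (stability under $\vphi$, $N$; computation of $\filt\mfm$ modulo $\filt S\cdot\mfm$; verification that $\vphi(\mathfrak{X}_0)\in p^2\mfm$), and you correctly identify equation~(\ref{D0-third}) as the decisive ingredient. However, your prediction about \emph{where} that identity is used is not accurate, and carrying out your plan as stated would lead to a false expectation. After reducing $\vphi(\mathfrak{X}_0)$ modulo $p^2\mfm$, the paper finds two surviving terms: a coefficient of $(\gamma-1)\Baset$ proportional to $\Delta C_0 - p\lambda(\mflo-1)C_2$, and a coefficient of $(\gamma-1)^2\Baseth$ proportional to $[\lambda(\mflo-1)-\lambdat\Delta]C_1 - p\lambda^2(\mflo-1)C_2$. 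Neither of these is the left-hand side of~(\ref{D0-third}); the $(\gamma-1)^2\Baseth$ term you single out is killed directly by inequality~(\ref{D0-fourth}), which is an elementary consequence of $\mathfrak{X}_0\in\filt\mfm$ and does not require~(\ref{D0-third}).

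The role of~(\ref{D0-third}) is instead concentrated in the second step: when reducing $V$ (the $E_3$-coordinate of $\mathfrak{X}_0$) modulo $(W)$, one multiplies through by $\Delta[\lambda(\mflo-1)-\lambdat\Delta]$ and uses~(\ref{D0-third}) to show that the extraneous expression $X$ vanishes identically. This is what upgrades the raw inequality~(\ref{D0-fifth}) to the stronger form~(\ref{D0-sixth}), namely $\val{\Delta C_0 - p\lambda(\mflo-1)C_2}\geq\val{p[\lambda(\mflo-1)-\lambdat\Delta]}$, and it is~(\ref{D0-sixth}), not~(\ref{D0-third}), that then kills the $(\gamma-1)\Baset$ term in the final computation. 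Your intuition that the fixed-point relation is the heart of the proof is right, but it enters as an algebraic simplification inside the valuation estimate on $C_0$ and $C_2$, not as a direct cancellation in the $\vphi(\mathfrak{X}_0)$ expansion. If you carried out the computation expecting the latter, you would be stuck; knowing to multiply $V$ by $\Delta[\lambda(\mflo-1)-\lambdat\Delta]$ and then invoke~(\ref{D0-third}) is the nonobvious move.
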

Note that $S_{\roi}(\Baseo,\Baset,\Baseth)=S_{\roi}(\Baseo,\Baset,\lambda(\mflo-1)\baseth)$.

\begin{proof}
During the proof, we let $\Delta:=\Delta_{\tiny{\ref{subsec-D0second}}}$ and $\mfm:=\mfm_{[0,\frac{1}{2}]}$ for brevity. It is routine to check that $\vphi(\Baseo)\equiv\Baseth$ and $\vphi(\Baset)\equiv\vphi(\Baseth)\equiv N(\Baseo)\equiv N(\Baset)\equiv N(\Baseth)\equiv 0$ modulo $\maxi{E}\mfm$. Hence, $\mfm$ is stable under $\vphi$ and $N$.

We check that $\vphi(\filt \mfm)\subset p^{2}\mfm$. It is easy to check that $\filt S\cdot\mfm\subset\filt\mfm$, and so it is enough to check $\vphi(\mathfrak{X}_{0})\in p^{2}\mfm$, since $\vphi(\filt S\cdot\mfm)\subset p^{2}\mfm$. We first compute  $\filt \mfm$ modulo $\filt S\cdot\mfm$, by finding out the conditions that $\mathfrak{X}_{0}$ be in $\mfm$. $\mathfrak{X}_{0}\equiv C_{0}\Big{(}\frac{1}{p}E_{1}-\frac{1}{\lambda\lambdat}\Baset +\frac{(\mflt+p\lambda)[\lambda(\mflo-1)-\lambdat\Delta]+ p\lambda\lambdat\Delta}{p\lambdat[\lambda(\mflo-1)-\lambdat\Delta]^{2}}\Baseth- \frac{(\mflo-1)-p\mflo}{p[\lambda(\mflo-1)-\lambdat\Delta]}\Baseth\Big{)}+ (u-p)\Big{(}\frac{C_{1}}{p}E_{1}- \frac{[\lambda(\mflo-1)-\lambdat\Delta]C_{1}- p\lambda^{2}(\mflo-1)C_{2}}{\lambda\lambdat[\lambda(\mflo-1)-\lambdat\Delta]}E_{2}+ \frac{V}{W}E_{3}\Big{)}$ modulo $\filt S\cdot\mfm$, where $W=p\lambdat[\lambda(\mflo-1)-\lambdat\Delta]^{2}$ and
\begin{multline*}
V=\lambdat[\lambda(\mflo-1)-\lambdat\Delta](C_{0}+p\mflo C_{1}+p\mflt C_{2})+ p\lambda^{2}(\mflo-1)C_{1}+\\ \mflt[\lambda(\mflo-1)-\lambdat\Delta] C_{1}-\lambdat(\mflo-1)[\lambda(\mflo-1)-\lambdat\Delta]C_{1}-\\ p^{2}\lambda^{3}(\mflo-1)C_{2}- p\lambda\mflt[\lambda(\mflo-1)-\lambdat\Delta]C_{2}.
\end{multline*}
Hence, if $\mathfrak{X}_{0}\in\filt \mfm$, then we get $\val{C_{0}}\geq\val{\lambda\lambdat}=2-\val\lambda$, $\val{C_{1}}\geq1$,
\begin{equation}\label{D0-fourth}
\Val{[\lambda(\mflo-1)-\lambdat\Delta]C_{1}- p\lambda^{2}(\mflo-1)C_{2}}\geq\Val{\lambda\lambdat[\lambda(\mflo-1)-\lambdat\Delta]},
\end{equation}
and
\begin{equation}\label{D0-fifth}
\val{V}\geq \val{W}.
\end{equation}
The inequality (\ref{D0-fourth}) with $\val{C_{1}}\geq1$ implies $$\val{C_{2}}\geq-\val\lambda.$$ Using the inequalities $\val{C_{1}}\geq1$ and $\val{C_{2}}\geq-\val\lambda$, one can readily check that
\begin{multline*}
V\equiv\lambdat[\lambda(\mflo-1)-\lambdat\Delta]C_{0}+ (\mflt+p\lambda)[\lambda(\mflo-1)-\lambdat\Delta]C_{1}+\\ p\lambda\lambdat\Delta C_{1} -\lambdat(\mflo-1)[\lambda(\mflo-1)-\lambdat\Delta]C_{1}- \\ p\lambda(\mflt+p\lambda)[\lambda(\mflo-1)-\lambdat\Delta]C_{2}-p^{2}\lambda^{2}\lambdat\Delta C_{2}
\end{multline*}
 modulo $(W)$. Thus
\begin{equation*}
\Delta[\lambda(\mflo-1)-\lambdat\Delta]\cdot V\equiv\lambdat [\lambda(\mflo-1)-\lambdat\Delta]^{2} \big(\Delta C_{0}-p\lambda(\mflo-1)C_{2}\big)+X
\end{equation*}
modulo $([\lambda(\mflo-1)-\lambdat\Delta]W)$, where
\begin{multline*}
X= p\lambda\lambdat(\mflo-1)[\lambda(\mflo-1)-\lambdat\Delta]^{2}C_{2}+ (\mflt+p\lambda)[\lambda(\mflo-1)-\lambdat\Delta]^{2}\Delta C_{1}-\\ p\lambda(\mflt+p\lambda)[\lambda(\mflo-1)-\lambdat\Delta]^{2}\Delta C_{2}+ p\lambda\lambdat[\lambda(\mflo-1)-\lambdat\Delta]\Delta^{2}C_{1}- \\ p^{2}\lambda^{2}\lambdat[\lambda(\mflo-1)-\lambdat\Delta]\Delta^{2}C_{2}- \lambdat(\mflo-1)[\lambda(\mflo-1)-\lambdat\Delta]^{2}\Delta C_{1}.
\end{multline*}
By the inequality (\ref{D0-fourth}),
\begin{multline*}
X\equiv p\lambda\lambdat(\mflo-1)[\lambda(\mflo-1)-\lambdat\Delta]^{2}C_{2}+ p\lambda^{2}(\mflt+p\lambda)(\mflo-1)[\lambda(\mflo-1)-\lambdat\Delta]\Delta C_{2}-\\ p\lambda(\mflt+p\lambda)[\lambda(\mflo-1)-\lambdat\Delta]^{2}\Delta C_{2}+ p^{2}\lambda^{3}\lambdat(\mflo-1)\Delta^{2}C_{2}- \\ p^{2}\lambda^{2}\lambdat[\lambda(\mflo-1)-\lambdat\Delta]\Delta^{2}C_{2}- p\lambda^{2}\lambdat(\mflo-1)^{2}[\lambda(\mflo-1)-\lambdat\Delta]\Delta C_{2}
\end{multline*}
modulo $([\lambda(\mflo-1)-\lambdat\Delta]W)$, which rearranges to
\begin{multline*}
p\lambda\lambdat\big[(\mflo-1)[\lambda(\mflo-1)-\lambdat\Delta]^{2}- \lambda(\mflo-1)^{2}[\lambda(\mflo-1)-\lambdat\Delta]\Delta+\\
\big((\mflt+p\lambda)[\lambda(\mflo-1)-\lambdat\Delta]+p\lambda\lambdat\Delta\big)\Delta^{2}\big] C_{2}.
\end{multline*}
But this is $0$ by the equation (\ref{D0-third}). Hence, from the inequality (\ref{D0-fifth}), we have
\begin{equation}\label{D0-sixth}
\Val{\Delta C_{0}-p\lambda(\mflo-1)C_{2}}\geq\Val{p[\lambda(\mflo-1)-\lambdat\Delta]}.
\end{equation}

Finally, we check $\vphi(\mathfrak{X}_{0})\in p^{2}\mfm$. It is easy to check that $\vphi(\mathfrak{X}_{0})\equiv\lambda C_{0}\Big(\Baseo-\frac{p\Delta(\gamma-1)}{\lambda[\lambda(\mflo-1)-\lambdat\Delta]}\Baset\Big)+ p\lambda C_{1}(\gamma-1)\Big(\Baseo-\frac{\gamma-1}{[\lambda(\mflo-1)-\lambdat\Delta]}\Baseth- \frac{p\Delta(\gamma-1)}{\lambda[\lambda(\mflo-1)-\lambdat\Delta]}\Baset\Big)+ pC_{2}(\gamma-1)\Big(\frac{p\lambda(\mflo-1)}{[\lambda(\mflo-1)-\lambdat\Delta]}\Baset+ \frac{p\lambda^{3}(\mflo-1)(\gamma-1)}{[\lambda(\mflo-1)-\lambdat\Delta]^{2}}\Baseth\Big)$ modulo $p^{2}\mfm$, using the fact $\vphi(\frac{u-p}{p})\equiv\gamma-1$ modulo $pS$. Since $\val{C_{0}}\geq\val{\lambda\lambdat}$ and $\val{C_{1}}\geq1$,
$\vphi(\mathfrak{X}_{0})\equiv-p\frac{\Delta C_{0}-p\lambda(\mflo-1)C_{2}}{\lambda(\mflo-1)-\lambdat\Delta} (\gamma-1)\Baset - p\lambda\frac{[\lambda(\mflo-1)-\lambdat\Delta]C_{1}-p\lambda^{2} (\mflo-1)C_{2}}{[\lambda(\mflo-1)-\lambdat\Delta]^{2}}(\gamma-1)^{2}\Baseth$ modulo $p^{2}\mfm$. Then $\vphi(\mathfrak{X}_{0})\equiv0$ modulo $p^{2}\mfm$ by the inequalities (\ref{D0-fourth}) and (\ref{D0-sixth}). Thus $\vphi(\filt \mfm)\subset p^{2}\mfm$.
\end{proof}

\subsection{Strongly divisible modules: the third case}\label{subsec-D0third}
In this subsection, we construct strongly divisible modules in $\mfd_{[0,\frac{1}{2}]}$ under the assumption $\mathbf{H(0,3)}$ as at the beginning of this section.

We first define two sequences $G_{m}$ and $H_{m}$ for $m\geq0$ recursively as follows: $G_{0}=H_{0}=1$ and
$$G_{m+1}=(\mflt H_{m}+p\lambda G_{m})^{2};$$ $$H_{m+1}=G_{m+1}-\lambdat[(\mflo-1)(\mflt H_{m}+p\lambda G_{m})+p\lambdat G_{m}]H_{m}.$$
We prove that the sequence $G_{m}/H_{m}$ converses in $1+\maxi{E}$. The limit appears in the coefficients of the strongly divisible modules in Proposition \ref{prop-D0third}.

\begin{lemm}
Keep the assumption $\mathbf{H(0,3)}$. Then, for $m\geq0$,
\begin{enumerate}
\item $\val{G_{m}-H_{m}}-\val{G_{m}}\geq\min\big\{\Val{\lambdat(\mflo-1)}-\val{\mflt+p\lambda}, \val{p\lambdat^{2}}-2\val{\mflt+p\lambda}\big\}>0$;
\item $\val{\mflt H_{m}+p\lambda G_{m}}=\val{\mflt+p\lambda}+\val{H_{m}}$;
\item $\val{G_{m}H_{m+1}-H_{m}G_{m+1}}-\val{H_{m}H_{m+1}}\geq \min\big\{\Val{\lambdat(\mflo-1)}-\val{\mflt+p\lambda},\val{p\lambdat^{2}}-2\val{\mflt+p\lambda}\big\}+ m\min\big\{\Val{p\lambda\lambdat(\mflo-1)}-2\val{\mflt+p\lambda},3[\val{\lambda\lambdat}-\val{\mflt+p\lambda}], \val{p\lambdat^{2}}-2\val{\mflt+p\lambda}\big\}$.
\end{enumerate}
\end{lemm}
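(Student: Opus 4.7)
The plan is to follow exactly the template of the analogous lemma in subsection~\ref{subsec-D0second}: prove all three parts by induction on $m$, with parts (1) and (2) done in tandem and part (3) reduced to a single algebraic identity.

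For parts (1) and (2), I would induct jointly. Part (1) is trivial at $m=0$ since $G_0=H_0=1$. Suppose (1) holds at step $m$; then $v_p(G_m)=v_p(H_m)$. To establish (2) at step $m$, I write
\begin{equation*}
\mflt H_m + p\lambda G_m = (\mflt+p\lambda)H_m + p\lambda(G_m-H_m),
\end{equation*}
and observe that by (1) at step $m$ together with $v_p(p\lambda) + \min\{v_p(\lambdat(\mflo-1)) - v_p(\mflt+p\lambda),\, v_p(p\lambdat^2) - 2v_p(\mflt+p\lambda)\} > v_p(\mflt+p\lambda)$ (which is where the hypotheses $\mathbf{H(0,3)}$ enter, namely $v_p(\mflt+p\lambda)< v_p(\lambda\lambdat)$ and the two inequalities relating $\mflt+p\lambda$ to $\lambdat(\mflo-1)$ and $p(\mflo-1)$), the first summand dominates strictly. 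To pass from (1) at step $m$ to (1) at step $m+1$, I use the explicit formula
\begin{equation*}
G_{m+1}-H_{m+1}=\lambdat\bigl[(\mflo-1)(\mflt H_m+p\lambda G_m)+p\lambdat G_m\bigr]H_m
\end{equation*}
and bound the two terms in the bracket separately using (2) at step $m$ and $v_p(G_m)=v_p(H_m)$; dividing by $v_p(G_{m+1})=2v_p(\mflt+p\lambda)+2v_p(H_m)$ yields the required lower bound.

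For part (3), the heart of the matter is to derive an identity of the form
\begin{equation*}
G_m H_{m+1}-H_m G_{m+1}=Z_m\cdot(G_{m-1}H_m-H_{m-1}G_m)\qquad(m\geq 1),
\end{equation*}
where $Z_m$ is a polynomial expression in $G_{m-1},H_{m-1},G_m,H_m,\mflt H_{m-1}+p\lambda G_{m-1},\mflt H_m+p\lambda G_m$. The derivation mirrors the one in subsection~\ref{subsec-D0second}: use the recursion to write $G_m H_{m+1}-H_m G_{m+1}=(G_m-H_m)G_{m+1}-\lambdat G_mH_m[(\mflo-1)(\mflt H_m+p\lambda G_m)+p\lambdat G_m]$, substitute $G_m-H_m=\lambdat[(\mflo-1)(\mflt H_{m-1}+p\lambda G_{m-1})+p\lambdat G_{m-1}]H_{m-1}$ from the previous step, split into the $\lambdat(\mflo-1)$- and $p\lambdat^2$-components, and in each component factor out $G_{m-1}H_m-H_{m-1}G_m$ using the algebraic identities $H_{m-1}A_{m-1}A_m^2-G_mH_mA_m$ and $H_{m-1}G_{m-1}A_m^2-G_m^2H_m$ (with $A_k:=\mflt H_k+p\lambda G_k$), in complete analogy with the calculation of $W_1,W_2$ in the previous subsection. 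Once the identity is in hand, parts (1) and (2) provide the valuation bounds on the coefficients of $Z_m$, giving $v_p(Z_m)-v_p(H_{m-1}H_m)\geq\min\{v_p(p\lambda\lambdat(\mflo-1))-2v_p(\mflt+p\lambda),\,3[v_p(\lambda\lambdat)-v_p(\mflt+p\lambda)],\,v_p(p\lambdat^2)-2v_p(\mflt+p\lambda)\}$, and the induction then closes. The base case $m=0$ is a direct computation: $G_0H_1-H_0G_1=H_1-G_1=-\lambdat[(\mflo-1)(\mflt+p\lambda)+p\lambdat]$, whose valuation minus $v_p(H_0H_1)=v_p(H_1)=2v_p(\mflt+p\lambda)$ matches the $m=0$ case of the bound.

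The main obstacle will be the algebraic identity underlying part (3): the two bracketed expressions $H_{m-1}A_{m-1}A_m^2-G_mH_mA_m$ and $H_{m-1}G_{m-1}A_m^2-G_m^2H_m$ must each be reorganized so that $G_{m-1}H_m-H_{m-1}G_m$ factors out cleanly, and the remaining cofactors must be shown to have the claimed valuation uniformly in~$m$. This is a purely mechanical but lengthy calculation, and the bookkeeping is where the per-step decrement $\min\{v_p(p\lambda\lambdat(\mflo-1))-2v_p(\mflt+p\lambda),\,3[v_p(\lambda\lambdat)-v_p(\mflt+p\lambda)],\,v_p(p\lambdat^2)-2v_p(\mflt+p\lambda)\}$ originates — I expect each of the three terms in that minimum to come respectively from the $\lambdat(\mflo-1)$-component, the $p\lambdat^2$-component involving $G_{m-1}G_m A_m^2$, and a cross term coming from the substitution for $G_m-H_m$.
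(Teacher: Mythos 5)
Your proposal follows the paper's proof essentially step by step: the joint induction on $m$ for (1) and (2), the rewriting $\mflt H_m + p\lambda G_m = (\mflt+p\lambda)H_m + p\lambda(G_m - H_m)$ with domination by the first term, and for (3) the decomposition $G_m H_{m+1} - H_m G_{m+1} = G_m(H_{m+1}-G_{m+1}) + (G_m - H_m)G_{m+1}$, the substitution of $G_m - H_m$ from the preceding step of the recursion, the split into $\lambdat(\mflo-1)$- and $p\lambdat^2$-components, and the factoring of $G_{m-1}H_m - H_{m-1}G_m$ from each. The one small imprecision is in your final speculation about the sources of the three terms in the per-step minimum: the paper factors the cofactor of $G_{m-1}H_m - H_{m-1}G_m$ as $-(W_1 + W_2 - W_3)$, where $W_1$ carries $p\lambda\lambdat(\mflo-1)$ (your first term), while the $p\lambdat^2$-component splits further via the identity $H_{m-1}A_m - A_{m-1}H_m = p\lambda(H_{m-1}G_m - G_{m-1}H_m)$ and $G_{m-1}A_m - A_{m-1}G_m = \mflt(G_{m-1}H_m - H_{m-1}G_m)$ into $W_2$ (with factor $p^2\lambda\lambdat^2$, giving $3[\val{\lambda\lambdat}-\val{\mflt+p\lambda}]$) and $W_3$ (with factor $p\lambdat^2(\mflt+p\lambda)$, giving $\val{p\lambdat^2}-2\val{\mflt+p\lambda}$); there is no separate cross term from the $G_m - H_m$ substitution. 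This does not affect the validity of your approach.
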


\begin{proof}
(2) is immediate from (1) since (1) implies $\val{G_{m}}=\val{H_{m}}$ for all $m\geq0$ and $\val{\mflt H_{m}+p\lambda G_{m}}=\Val{(\mflt+p\lambda)H_{m}+p\lambda(G_{m}-H_{m})}=\val{\mflt+p\lambda}+\val{H_{m}}$.

We prove (1) by induction. For $m=0$, it is trivial. Assume that (1) is true for $m$. Then $\val{G_{m}}=\val{H_{m}}$ and
$\val{\mflt H_{m}+p\lambda G_{m}}=\val{\mflt+p\lambda}+\val{H_{m}}$. Hence, we have $\val{G_{m+1}}=2\val{\mflt+p\lambda}+2\val{H_{m}}$ and
$$
\begin{aligned}
\val{&G_{m+1}-H_{m+1}}- \val{G_{m+1}}\\
&=\Val{\lambdat[(\mflo-1)(\mflt H_{m}+p\lambda G_{m})+p\lambdat G_{m}]H_{m}}- \val{G_{m+1}}\\
&\geq\min\big\{\Val{\lambdat(\mflo-1)(\mflt+p\lambda)H_{m}^{2}},\Val{p\lambdat^{2}G_{m}H_{m}}\big\}- \val{G_{m+1}}\\
&=\min\big\{\Val{\lambdat(\mflo-1)}-\val{\mflt+p\lambda},\val{p\lambdat^{2}}-2\val{\mflt+p\lambda}\big\}>0.
\end{aligned}
$$
Hence, (1) holds by induction.

For (3), we induct on $m$ as well. For $m=0$, $G_{0}H_{1}-H_{0}G_{1}=H_{1}-G_{1}= -\lambdat[(\mflo-1)(\mflt+p\lambda)+p\lambdat]$. Hence, it holds for $m=0$.

We claim the following identity: for $m\geq1$, $$G_{m}H_{m+1}-H_{m}G_{m+1}=-(W_{1}+W_{2}-W_{3})(G_{m-1}H_{m}-H_{m-1}G_{m}),$$
where $W_{1}=p\lambda\lambdat(\mflo-1)(\mflt H_{m-1}+p\lambda G_{m-1})(\mflt H_{m}+p\lambda G_{m})$, $W_{2}=p^{2}\lambda\lambdat^{2}[(\mflt H_{m}+p\lambda G_{m})G_{m-1}+ (\mflt H_{m-1}+p\lambda G_{m-1})H_{m}]$, and
$W_{3}=p\lambdat^{2}(\mflt+p\lambda)(\mflt H_{m-1}+p\lambda G_{m-1})H_{m}$.

Indeed, $G_{m}H_{m+1}-H_{m}G_{m+1}=G_{m}(H_{m+1}-G_{m+1})+(G_{m}-H_{m})G_{m+1}= -\lambdat [(\mflo-1)(\mflt H_{m}+p\lambda G_{m})+p\lambdat G_{m}]G_{m}H_{m}+ \lambdat[(\mflo-1)(\mflt H_{m-1}+p\lambda G_{m-1})+p\lambdat G_{m-1}]H_{m-1}G_{m+1}= \lambdat(\mflo-1)[H_{m-1}G_{m+1}(\mflt H_{m-1}+p\lambda G_{m-1})-(\mflt H_{m}+p\lambda G_{m})G_{m}H_{m}]+p\lambdat^{2}(G_{m-1}H_{m-1}G_{m+1}-G_{m}^{2}H_{m})$. But $H_{m-1}G_{m+1}(\mflt H_{m-1}+p\lambda G_{m-1})- (\mflt H_{m}+p\lambda G_{m})G_{m}H_{m}= (\mflt H_{m-1}+p\lambda G_{m-1})(\mflt H_{m}+p\lambda G_{m})[(\mflt H_{m}+p\lambda G_{m})H_{m-1}-(\mflt H_{m-1}+p\lambda G_{m-1})H_{m}]= p\lambda(H_{m-1}G_{m}-G_{m-1}H_{m})(\mflt H_{m-1}+p\lambda G_{m-1})(\mflt H_{m}+p\lambda G_{m})$ and $G_{m-1}H_{m-1}G_{m+1}-G_{m}^{2}H_{m}= G_{m-1}H_{m-1}(\mflt H_{m}+p\lambda G_{m})^{2}-(\mflt H_{m-1}+p\lambda G_{m-1})^{2}G_{m}H_{m}= G_{m-1}(\mflt H_{m}+p\lambda G_{m})[H_{m-1}(\mflt H_{m}+p\lambda G_{m})- (\mflt H_{m-1}+p\lambda G_{m-1})H_{m}]+(\mflt H_{m-1}+p\lambda G_{m-1})H_{m}[G_{m-1}(\mflt H_{m}+p\lambda G_{m})- (\mflt H_{m-1}+p\lambda G_{m-1})G_{m}]= \mflt (\mflt H_{m-1}+p\lambda G_{m-1})H_{m}(G_{m-1}H_{m}-H_{m-1}G_{m})+p\lambda G_{m-1}(\mflt H_{m}+p\lambda G_{m})(H_{m-1}G_{m}-G_{m-1}H_{m})$. Hence, we proved the identity.

The identity implies $\val{G_{m}H_{m+1}-H_{m}G_{m+1}}\geq\min\{\val{W_{1}},\val{W_{2}},\val{W_{3}}\}+\val{G_{m-1}H_{m}-H_{m-1}G_{m}}$. By parts (1) and (2), $\val{W_{1}}=\val{p\lambda\lambdat}+\val{\mflo-1}+2\val{\mflt+p\lambda}+\val{H_{m-1}H_{m}}$, $\val{W_{2}}=\val{p^{2}\lambda\lambdat^{2}}+\val{\mflt+p\lambda}+\val{H_{m-1}H_{m}}$, and $\val{W_{3}}=\val{p\lambdat^{2}}+2\val{\mflt+p\lambda}+\val{H_{m-1}H_{m}}$. Thus, $\val{G_{m}H_{m+1}-H_{m}G_{m+1}}\geq \min\big\{\Val{p\lambda\lambdat(\mflo-1)}-2\val{\mflt+p\lambda},3[\val{\lambda\lambdat}-\val{\mflt+p\lambda}], \val{p\lambdat^{2}}-2\val{\mflt+p\lambda}\big\}+4\val{\mflt+p\lambda}+\val{H_{m-1}H_{m}} +\min\big\{\Val{\lambdat(\mflo-1)}-\val{\mflt+p\lambda},\val{p\lambdat^{2}}-2\val{\mflt+p\lambda}\big\}+ (m-1)\min\big\{\Val{p\lambda\lambdat(\mflo-1)}-2\val{\mflt+p\lambda},3[\val{\lambda\lambdat}-\val{\mflt+p\lambda}], \val{p\lambdat^{2}}-2\val{\mflt+p\lambda}\big\}+\val{H_{m-1}H_{m}}$ by induction hypothesis. Hence, it holds by induction.
\end{proof}

The assumption $\mathbf{H(0,3)}$ implies that the quantities in the sets of the part (3) of the lemma above are strictly positive. So the part (3) of the lemma says that $\val{G_{m}H_{m+1}-H_{m}G_{m+1}}-\val{H_{m}H_{m+1}}$ approaches $\infty$ as $m$ goes to $\infty$. That is, the sequence $G_{m}/H_{m}$ is Cauchy. We let $$\Delta_{\tiny{\ref{subsec-D0third}}}:=\lim_{m\mapsto\infty}\frac{G_{m}}{H_{m}}.$$
Note that $\Delta_{\tiny{\ref{subsec-D0third}}}$ depends on the values of the parameters $\lambda,\lambdat,\mflo,\mflt$.

The part (1) of the lemma implies that $\val{G_{m}}=\val{H_{m}}$ for all $m\geq0$ and
\begin{equation*}
\val{1-\Delta_{\tiny{\ref{subsec-D0third}}}}\geq
\min\big\{\Val{\lambdat(\mflo-1)}-\val{\mflt+p\lambda}, \val{p\lambdat^{2}}-2\val{\mflt +p\lambda}\big\}>0,
\end{equation*}
which immediately implies $$\frac{\mflt+p\lambda\Delta_{\tiny{\ref{subsec-D0third}}}}{\mflt+p\lambda}\in1+\maxid.$$
In particular, $\val{\mflt+p\lambda\Delta_{\tiny{\ref{subsec-D0third}}}}=\val{\mflt+p\lambda}$.

It is also easy to check that $\Delta_{\tiny{\ref{subsec-D0third}}}$ satisfies the equation
\begin{multline}\label{D0-seventh}
(\mflt+ p\lambda \Delta_{\tiny{\ref{subsec-D0third}}})^{2}(1-\Delta_{\tiny{\ref{subsec-D0third}}})+
\lambdat\big((\mflo-1)(\mflt +p\lambda \Delta_{\tiny{\ref{subsec-D0third}}})+p\lambdat\Delta_{\tiny{\ref{subsec-D0third}}}\big) \Delta_{\tiny{\ref{subsec-D0third}}}=0,
\end{multline}
by taking the limits of $G_{m+1}/H_{m+1}=(\mflt H_{m}+p\lambda G_{m})^{2}/\big((\mflt H_{m}+p\lambda G_{m})^{2}-\lambdat[(\mflo-1)(\mflt H_{m}+p\lambda G_{m})+p\lambdat G_{m}]H_{m}\big)$. The equation plays a crucial role in the proof of the following proposition.

\begin{prop}\label{prop-D0third}
Keep the assumption $\mathbf{H(0,3)}$. Then $\mfm_{[0,\frac{1}{2}]}:=S_{\roi}(E_{1},E_{2},E_{3})$ is a strongly divisible module in $\mfd_{[0,\frac{1}{2}]}$, where
    $$\left\{
        \begin{array}{ll}
          E_{1}=\mathfrak{U}_{0}+\frac{\lambdat(\gamma-1)}{\lambda^{2}(\mflt +p\lambda \Delta_{\tiny{\ref{subsec-D0third}}})}(\lambdat\baset+\lambda\mflt\baseth)  & \hbox{} \\
          E_{2}=\frac{1}{p\lambda} (\lambdat\baset+\lambda\mflt\baseth)-\lambda \Delta_{\tiny{\ref{subsec-D0third}}}(\gamma-1)\baseth & \hbox{} \\
          E_{3}=\frac{\lambda(\mflt+p\lambda\Delta_{\tiny{\ref{subsec-D0third}}})}{p}\baseth. & \hbox{}
        \end{array}
      \right.$$
\end{prop}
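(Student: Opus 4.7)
The plan is to follow the same template as Propositions~\ref{prop-D0first} and~\ref{prop-D0second}, but with the role of the limit $\Delta$ and its defining equation~(\ref{D0-seventh}) adjusted to this case.

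First I would verify that $\mfm:=\mfm_{[0,\frac{1}{2}]}$ is stable under $\vphi$ and $N$. Because $\vphi(\gamma)\in p^{p-1}S$, $\vphi(\tfrac{u-p}{p})\equiv\gamma-1$ modulo $pS$, and $N(\gamma)=-p[\gamma+(u-p)^{p-1}]$, I can write down $\vphi(E_i)$ and $N(E_i)$ explicitly and check that $\vphi(E_1),\vphi(E_2),\vphi(E_3),N(E_1),N(E_2),N(E_3)$ all lie in $\maxi{E}\mfm$. The only subtle bookkeeping here is to note that the parts in which $\Delta_{\tiny{\ref{subsec-D0third}}}$ appears involve the factors $\mflt+p\lambda\Delta_{\tiny{\ref{subsec-D0third}}}$, whose valuation is $\val{\mflt+p\lambda}$ by the remark following the lemma; this is what makes $E_3$ and the $\gamma$-terms in $E_1$ integral.

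Second, to check $\vphi(\filt\mfm)\subset p^2\mfm$ it suffices, as in the earlier propositions, to verify $\vphi(\mathfrak{X}_0)\in p^2\mfm$. I would solve explicitly for $\mathfrak{X}_0$ in terms of $E_1,E_2,E_3$ modulo $\filt S_E\cdot\mfm$, writing
\begin{equation*}
\mathfrak{X}_0\equiv C_0\bigl(\tfrac{1}{p}E_1+\cdots\bigr)+(u-p)\bigl(\tfrac{C_1}{p}E_1+(\cdots)E_2+\tfrac{V}{W}E_3\bigr)\pmod{\filt S_E\cdot\mfm},
\end{equation*}
with $W$ a suitable denominator involving $\lambda$, $\lambdat$, and $\mflt+p\lambda\Delta_{\tiny{\ref{subsec-D0third}}}$. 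Requiring each coefficient to be integral gives lower bounds on $\val{C_0}$, $\val{C_1}$, and a coupled bound on the $E_2$-coefficient that will produce an estimate of the form $\val{C_2}\geq-\val{\lambda}$ together with an inequality on the quantity $\mflt C_1+p\lambda\Delta_{\tiny{\ref{subsec-D0third}}}\text{-terms}$, analogous to~(\ref{D0-first}) and~(\ref{D0-fourth}). The condition $\val{V}\geq\val{W}$ will give the analogue of~(\ref{D0-sixth}).

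Third, and this is the main obstacle, I have to show that the equation~(\ref{D0-seventh}) satisfied by $\Delta_{\tiny{\ref{subsec-D0third}}}$ produces exactly the cancellation needed to deduce the final bound on $V$. In Proposition~\ref{prop-D0second} the same scheme worked because~(\ref{D0-third}) annihilated a residual combination of coefficients; here I expect, after isolating the leading terms of $V\bmod W$ using the already-established bounds on $C_0,C_1,C_2$, that the leftover expression factors as a nonzero scalar multiple of the left-hand side of~(\ref{D0-seventh}), hence vanishes. This is the essential algebraic identity to verify and is where care is needed to match valuations case-by-case depending on which of the three minima in the definition of $\mathbf{H(0,3)}$ is attained.

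Finally, given the bounds from step two (especially on $\val{C_0}$, $\val{C_1}$, and the combination appearing in the $E_2$-slot), I would expand $\vphi(\mathfrak{X}_0)$ using $\vphi(\tfrac{u-p}{p})\equiv\gamma-1\pmod{pS}$, drop everything in $p^2\mfm$, and check that the surviving $\Baset$- and $\Baseth$-components are multiples of the two key combinations whose integrality was just established. This gives $\vphi(\mathfrak{X}_0)\equiv 0\pmod{p^2\mfm}$, and combined with the elementary inclusion $\vphi(\filt S_E\cdot\mfm)\subset p^2\mfm$ completes the proof that $\mfm$ is strongly divisible.
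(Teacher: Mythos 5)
Your proposal follows the paper's proof essentially step for step: stability of $\mfm$ under $\vphi$ and $N$ via the facts about $\vphi(\gamma)$, $\vphi(\frac{u-p}{p})$, and $N(\gamma)$; reduction to $\vphi(\mathfrak{X}_0)\in p^2\mfm$; extraction of integrality bounds on $C_0,C_1,C_2$ from expressing $\mathfrak{X}_0$ in the $E_i$-basis modulo $\filt S_E\cdot\mfm$; and the crucial use of equation~(\ref{D0-seventh}) to annihilate a residual combination of coefficients, exactly as you anticipate. One small inaccuracy in your sketch is the parenthetical suggestion that the cancellation via~(\ref{D0-seventh}) requires a case split on which of the three minima in $\mathbf{H(0,3)}$ is attained; in the paper the identity $X=0$ falls out uniformly after a single substitution, and the hypothesis $\mathbf{H(0,3)}$ is only used upstream to establish convergence of $G_m/H_m$ and the valuation $\val{\mflt+p\lambda\Delta_{\tiny{\ref{subsec-D0third}}}}=\val{\mflt+p\lambda}$.
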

Note that $S_{\roi}(\Baseo,\Baset,\Baseth)=S_{\roi}(\Baseo,\Baset,\frac{\lambda(\mflt+p\lambda)}{p}\baseth)$.

\begin{proof}
During the proof, we let $\Delta:=\Delta_{\tiny{\ref{subsec-D0third}}}$ and $\mfm:=\mfm_{[0,\frac{1}{2}]}$ for brevity. It is routine to check that $\vphi(\Baseo)\equiv\frac{p(\mflo-1)}{\mflt+p\lambda}\Baseth$, $\vphi(\Baset)\equiv\Baseth$, and $\vphi(\Baseth)\equiv N(\Baseo)\equiv N(\Baset)\equiv N(\Baseth)\equiv 0$ modulo $\maxi{E}\mfm$. Hence, $\mfm$ is stable under $\vphi$ and $N$.

We check that $\vphi(\filt \mfm)\subset p^{2}\mfm$. It is easy to check that $\filt S\cdot\mfm\subset\filt\mfm$, and so it is enough to check $\vphi(\mathfrak{X}_{0})\in p^{2}\mfm$, since $\vphi(\filt S\cdot\mfm)\subset p^{2}\mfm$. We first compute  $\filt \mfm$ modulo $\filt S\cdot\mfm$, by finding out the conditions that $\mathfrak{X}_{0}$ be in $\mfm$. $\mathfrak{X}_{0}\equiv C_{0}\Big{(}\frac{1}{p}E_{1}-\frac{1}{\lambdat}\Baset+\frac{\lambdat}{\lambda(\mflt+p\lambda \Delta)}\Baset+ \frac{1}{\lambda\lambdat }\Baseth-\frac{\mflo-1}{\lambda(\mflt+p\lambda \Delta)}\Baseth- \frac{p\lambdat \Delta}{\lambda(\mflt+p\lambda \Delta)^{2}}\Baseth+ \frac{p\mflo}{\lambda(\mflt+p\lambda \Delta)}\Baseth\Big{)}+ (u-p)\Big(\frac{C_{1}}{p}E_{1}- \frac{\lambda(\mflt+p\lambda)(C_{1}-p\lambda C_{2})-\lambdat^{2}C_{1}}{\lambda\lambdat(\mflt+p\lambda\Delta)}E_{2}+ \frac{V}{W}E_{3}\Big)$ modulo $\filt S\cdot\mfm$, where $W=\lambda\lambdat(\mflt+p\lambda \Delta)^{2}$ and
\begin{multline*}
V=\lambdat(\mflt+p\lambda \Delta)(C_{0}+p\mflo C_{1}+p\mflt C_{2})+ (\mflt+p\lambda \Delta)^{2}C_{1}- \\ \lambdat(\mflo-1)(\mflt+p\lambda \Delta)C_{1}- p\lambdat^{2}\Delta C_{1}-p\lambda(\mflt+p\lambda \Delta)^{2}C_{2}.
\end{multline*}
Hence, if $\mathfrak{X}_{0}\in\filt \mfm$, then we get $\val{C_{0}}\geq\val{\lambda\lambdat}=2-\val\lambda$, $\val{C_{1}}\geq1$,
\begin{equation}\label{D0-eightth}
\Val{\lambda(\mflt+p\lambda\Delta)(C_{1}-p\lambda C_{2})-\lambdat^{2}C_{1}}\geq\Val{\lambda\lambdat(\mflt+p\lambda\Delta)},
\end{equation}
and
\begin{equation}\label{D0-nineth}
\val{V}\geq \val{W}.
\end{equation}
The inequality (\ref{D0-eightth}) is equivalent to
\begin{equation}\label{D0-tenth}
\val{C_{1}-p\lambda C_{2}}\geq\val{\lambdat},
\end{equation}
which implies $\val{C_{2}}\geq-\val\lambda$ since $\val{C_{1}}\geq1$. Using the inequalities $\val{C_{1}}\geq1$ and $\val{C_{2}}\geq-\val\lambda$, one can readily check that
\begin{multline*}
V\equiv\lambdat(\mflt+p\lambda \Delta)C_{0}+(\mflt+p\lambda \Delta)^{2}C_{1}-\\ \lambdat(\mflo-1)(\mflt+p\lambda \Delta)C_{1}-
p\lambdat^{2}\Delta C_{1}-p\lambda(\mflt+p\lambda \Delta)^{2}C_{2}
\end{multline*}
 modulo $(W)$. Thus
\begin{equation*}
\Delta\cdot V\equiv(\mflt+p\lambda \Delta)\big(\lambdat\Delta C_{0}+ (\mflt+p\lambda \Delta)(C_{1}-p\lambda\Delta C_{2})\big)+X
\end{equation*}
modulo $(W)$, where $$X=\big[(\mflt+p\lambda \Delta)^{2}(\Delta-1)-\lambdat\big((\mflo-1)(\mflt+p\lambda \Delta)+ p\lambdat\Delta\big)\Delta\big]C_{1}.$$ But, by the equation (\ref{D0-seventh}), $X=0$. Hence, from the inequality (\ref{D0-nineth}), we have
\begin{equation}\label{D0-eleventh}
\Val{\lambdat\Delta C_{0}+(\mflt+p\lambda \Delta)(C_{1}-p\lambda\Delta C_{2})}\\
\geq\Val{\lambda\lambdat(\mflt+p\lambda \Delta)}.
\end{equation}

Finally, we check $\vphi(\mathfrak{X}_{0})\in p^{2}\mfm$. Using the fact $\vphi(\frac{u-p}{p})\equiv\gamma-1$ modulo $pS$, $\vphi(\mathfrak{X}_{0})\equiv\lambda C_{0}\Big(\Baseo-\frac{p\lambdat(\gamma-1)}{\lambda(\mflt+p\lambda\Delta)}\Baset- \frac{p^{2}\lambdat\Delta(\gamma-1)^{2}}{\lambda(\mflt+p\lambda\Delta)^{2}}\Baseth\Big)+p\lambda C_{1}(\gamma-1)\Big(\Baseo-\frac{p(\gamma-1)}{\lambda(\mflt+p\lambda\Delta)}\Baseth -\frac{p\lambdat(\gamma-1)}{\lambda(\mflt+p\lambda\Delta)}\Baset- \frac{p^{2}\lambdat\Delta(\gamma-1)^{2}}{\lambda(\mflt+p\lambda\Delta)^{2}}\Baseth\Big)+ pC_{2}(\gamma-1)\Big(p\lambda\Baset+ \frac{p^{2}\lambda\Delta(\gamma-1)}{\mflt+p\lambda\Delta}\Baseth\Big)$ modulo $p^{2}\mfm$. Then $\vphi(\mathfrak{X}_{0})\equiv- p^{2}\frac{\lambdat\Delta C_{0}+(\mflt+p\lambda \Delta)(C_{1}-p\lambda\Delta C_{2})}{(\mflt+p\lambda\Delta)^{2}}(\gamma-1)^{2}\Baseth$ modulo $p^{2}\mfm$ since $\val{C_{0}}\geq2-\val\lambda$, $\val{C_{1}}\geq1$, and $\val{C_{2}}\geq-\val\lambda$, and so $\vphi(\mathfrak{X}_{0})\equiv0$ modulo $p^{2}\mfm$ by the inequality (\ref{D0-eleventh}).  Thus $\vphi(\filt \mfm)\subset p^{2}\mfm$.
\end{proof}

\section{Galois stable lattices of $D_{[\frac{1}{2},1]}$}\label{sec-strong div mod1}
In this section, we construct strongly divisible modules of the modules $D_{[\frac{1}{2},1]}$ in Example \ref{example D[1/2,1]}. We let $\mfd_{[\frac{1}{2},1]}:=S\otimes_{\ZP}D_{[\frac{1}{2},1]}$. In this subsection, we write $D$ and $\mfd$ for $D_{[\frac{1}{2},1]}$ and $\mfd_{[\frac{1}{2},1]}$ respectively for brevity. Since we are concerned only with absolutely irreducible mod $p$ reductions, we may assume $\frac{1}{2}\leq\val\lambda<1$ due to Proposition \ref{Prop-ad-filt-module[1/2,1]}.

It is easy to check that $$\filo\mfd=S_{E}(\baseo+\mflo\baset+\mflt\baseth,\baset)+\filo S_{E}\cdot\mfd;$$
$$\filt\mfd=S_{E}\big(\baseo+\mflo\baset+\mflt\baseth+\frac{u-p}{p}\baseth\big)+\filo S_{E}(\baset)+\filt S_{E}\cdot\mfd.$$ (We omit their proofs.) So every element in $\filt\mfd$ is of the form $C_{0}(\baseo+\mflo\baset+\mflt\baseth+\frac{u-p}{p}\baseth)+C_{1}(u-p)(\baseo+\mflo\baset+\mflt\baseth) +C_{2}(u-p)\baset+A\baseo+B\baset+C\baseth$, where $C_{0},C_{1},C_{2}$ are in $E$ and $A,B,C$ are in $\filt S_{E}$. We let $$\mathfrak{X}_{1}:=C_{0}\big(\baseo+\mflo\baset+\mflt\baseth+\frac{u-p}{p}\baseth\big)+(u-p)\big(C_{1} (\baseo+\mflo\baset+\mflt\baseth) +C_{2}\baset\big),$$ which rearranges to $C_{0}\big(\baseo+\mflo\baset+\mflt\baseth\big)+(u-p)\big(C_{1} \baseo+(\mflo C_{1}+C_{2})\baset+ \frac{C_{0}+p\mflt C_{1}}{p}\baseth\big)$.

We divide the area in which the parameters of $D$ are defined into $3$ pieces as follows: for $\lambda,\lambdat\in\roi$ with $\frac{1}{2}\leq\val{\lambda}<1$ and $2\val\lambda+\val\lambdat=2$ and for $\mflo,\mflt\in E$,
\begin{description}
\item[H(1,1)] $\val{\mflo-\lambdat}\geq1$ and $\val{1-\mflt}\geq\val\lambda$;
\item[H(1,2)] $\Val{\lambda(\mflo-\lambdat)}\geq\Val{p(1-\mflt)}$ and $\val{1-\mflt}<\val\lambda$;
\item[H(1,3)] $\Val{\lambda(\mflo-\lambdat)}\leq\Val{p(1-\mflt)}$ and $\val{\mflo-\lambdat}<\min\{\Val{\lambda(1-\mflt)},1\}$.
\end{description}
Note that the conditions $\mathbf{H(1,2)}$ and $\mathbf{H(1,3)}$ intersect in $\val{\lambda(\mflo-\lambdat)}=\val{p(1-\mflt)}$ and $\val{1-\mflt}<\val\lambda$ if $\frac{1}{2}<\val\lambda<1$. We also note that the condition $\val{\mflo-\lambdat}<\val{\lambda(1-\mflt)}$ in $\mathbf{H(1,3)}$ matters only when $\val\lambda=\frac{1}{2}$, since $\val{\lambda(\mflo-\lambdat)}\leq\val{p(1-\mflt)}$ implies $\val{\mflo-\lambdat}<\val{\lambda(1-\mflt)}$ if $\frac{1}{2}<\val\lambda<1$. We construct strongly divisible modules for each case in the following three subsections.

In this section, we let $\mathfrak{U}_{1}:=p\baseo+\frac{\mflo}{\lambda}(\lambdat\baset+\baseth)+(\gamma+\mflt-1)\baseth$ for brevity.

\subsection{Strongly divisible modules: the first case}
In this subsection, we construct strongly divisible modules in $\mfd_{[\frac{1}{2},1]}$ under the assumption $\mathbf{H(1,1)}$ as at the beginning of this section.

\begin{prop}\label{prop-D1first}
Keep the assumption $\mathbf{H(1,1)}$. Then $\mfm_{[\frac{1}{2},1]}:=S_{\roi}(E_{1},E_{2},E_{3})$ is a strongly divisible module in $\mfd_{[\frac{1}{2},1]}$, where
    $$\left\{
        \begin{array}{ll}
          E_{1}=\mathfrak{U}_{1}+\frac{\lambdat(\gamma-1)}{\lambda}(\lambdat\baset+\baseth) & \hbox{} \\
          E_{2}=\frac{p}{\lambda}(\lambdat\baset+\baseth) & \hbox{} \\
          E_{3}=\lambda\baseth. & \hbox{}
        \end{array}
      \right.$$
\end{prop}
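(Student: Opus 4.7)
The plan is to adapt the strategy of Proposition~\ref{prop-D0first} to this dual setting; the structure of $\mfd_{[\frac{1}{2},1]}$ is mirror-symmetric to $\mfd_{[0,\frac{1}{2}]}$ in a way that the roles of the ``slope $0$'' and ``slope $1$'' parts are interchanged, which is already reflected in the shape of the given $E_{1},E_{2},E_{3}$.

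First I would verify that $\mfm_{[\frac{1}{2},1]}$ is stable under $\vphi$ and $N$ by directly computing $\vphi(E_{i})$ and $N(E_{i})$ using $\vphi(\gamma)\in p^{p-1}S$ and $N(\gamma)=-p[\gamma+(u-p)^{p-1}]$. The key is to check that the coefficients that appear all lie in $\roi$ given the assumption $\mathbf{H(1,1)}$ (in particular $\val{\mflo-\lambdat}\geq 1$ controls the ``cross'' term produced when $\vphi$ hits the $\mflo$-piece of $E_{1}$, and $\val{1-\mflt}\geq\val\lambda$ controls the $\mflt$-piece). As usual these reductions should hold modulo $\maxi{E}\mfm$, so the stability is clear.

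Next I would reduce $\vphi(\filt\mfm)\subset p^{2}\mfm$ to $\vphi(\mathfrak{X}_{1})\in p^{2}\mfm$, since $\vphi(\filt S\cdot\mfm)\subset p^{2}\mfm$ is automatic. The central computation is to rewrite $\mathfrak{X}_{1}$ in the $E_{1},E_{2},E_{3}$-basis modulo $\filt S\cdot\mfm$. Using the relations $\baseth=\tfrac{1}{\lambda}E_{3}$, $\lambdat\baset+\baseth=\tfrac{\lambda}{p}E_{2}$, and $p\baseo=E_{1}-\tfrac{\mflo+\lambdat(\gamma-1)}{\lambda}(\lambdat\baset+\baseth)-(\gamma+\mflt-1)\baseth$, I would expand and collect coefficients of $E_{1}, E_{2}, E_{3}$. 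Demanding that this expansion lie in $S_{\roi}\cdot(E_{1},E_{2},E_{3})$ will force inequalities of the form $\val{C_{0}}\geq 2-\val\lambda$, $\val{C_{1}}\geq 1$, together with a mixed inequality involving $C_{1}$ and $C_{2}$ that is the analogue of (\ref{D0-first}) (heuristically $\val{\lambdat C_{1}+p(\mflo-\lambdat)C_{2}}\geq\val{p}$, or some equivalent recast), plus a single ``big'' inequality on $V$ paralleling (\ref{D0-second}). The assumption $\mathbf{H(1,1)}$ should be exactly strong enough to ensure that the cross-terms produced from the $\mflo\neq\lambdat$ and $\mflt\neq 1$ discrepancies are absorbed into this big inequality.

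Finally I would compute $\vphi(\mathfrak{X}_{1})$ modulo $p^{2}\mfm$ using $\vphi(\tfrac{u-p}{p})\equiv\gamma-1\pmod{pS}$. After discarding terms that vanish modulo $p^{2}\mfm$ by the valuation bounds just derived, the surviving term should be a scalar multiple of $(\gamma-1)^{2}\Baseth$ (or $\Baset$) whose coefficient is precisely the quantity bounded by the mixed inequality, yielding $\vphi(\mathfrak{X}_{1})\equiv 0\pmod{p^{2}\mfm}$. The main obstacle I expect is bookkeeping: making sure that when $\val\lambda=\tfrac{1}{2}$ (the edge case where $D_{[0,\frac{1}{2}]}$ and $D_{[\frac{1}{2},1]}$ overlap via Proposition~\ref{Prop-ad-filt-module}) the hypothesis $\mathbf{H(1,1)}$ still produces strict enough inequalities, and that the $\mflo-\lambdat$ and $1-\mflt$ factors combine with $p$ and $\lambda$ in exactly the right way so that no extra conditions beyond $\mathbf{H(1,1)}$ are needed.
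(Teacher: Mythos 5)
Your plan follows essentially the same route as the paper's proof: verify $\vphi,N$-stability of the $S_{\roi}$-span, rewrite $\mathfrak{X}_1$ in the $E_1,E_2,E_3$-basis modulo $\filt S\cdot\mfm$ to extract valuation constraints on $C_0,C_1,C_2$, and then compute $\vphi(\mathfrak{X}_1)$ modulo $p^2\mfm$ and show it vanishes using those constraints. One small miscalibration in your prediction: the surviving term is a $(\gamma-1)\Baset$-term whose coefficient $\lambdat C_0-pC_2$ is controlled by the inequality coming from $V$ (which also yields $\val{C_0}\geq 2-\val\lambda$), not by the mixed $C_1,C_2$-inequality, but this is precisely the bookkeeping you flag and does not change the approach.
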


\begin{proof}
We note that $\val{\mflo-\lambdat}\geq 1$ implies $\val{\mflo}\geq \val\lambdat$. During the proof, we let $\mfm:=\mfm_{[\frac{1}{2},1]}$ for brevity. It is routine to check that $\vphi(E_{1})\equiv\vphi(E_{2})\equiv\vphi(E_{3})\equiv N(\Baseo)\equiv N(\Baset)\equiv N(\Baseth)\equiv 0$ modulo $\maxi{E}\mfm$, using the fact $\vphi(\gamma)\in p^{p-1}S$ and $N(\gamma)=-p[\gamma+(u-p)^{p-1}]$. Hence, $\mfm$ is stable under $\vphi$ and $N$.

We check that $\vphi(\filt \mfm)\subset p^{2}\mfm$. It is easy to check that $\filt S\cdot\mfm\subset\filt\mfm$, and so it is enough to check $\vphi(\mathfrak{X}_{1})\in p^{2}\mfm$, since $\vphi(\filt S\cdot\mfm)\subset p^{2}\mfm$. We first compute  $\filt \mfm$ modulo $\filt S\cdot\mfm$, by finding out the conditions that $\mathfrak{X}_{1}$ be in $\mfm$. $\mathfrak{X}_{1}\equiv C_{0}\Big(\frac{1}{p}E_{1}- \frac{\mflo-\lambdat}{p^{2}}\Baset+\frac{\lambda(\mflo-\lambdat)+\lambda\lambdat}{p\lambdat}\Baset +\frac{1-\mflt}{p\lambda}\Baseth-\frac{(\mflo-\lambdat)+\lambdat}{\lambda\lambdat}E_{3}- \frac{(1-\mflt)-1}{\lambda}\Baseth\Big)+ (u-p)\Big(\frac{C_{1}}{p}E_{1}+ \frac{p\lambda\mflo C_{1}+p\lambda C_{2}-\lambdat(\mflo-\lambdat)C_{1}}{p^{2}\lambdat}E_{2}+ \frac{V}{W}E_{3}\Big)$ modulo $\filt S\cdot\mfm$, where $W=p\lambda\lambdat$ and $$V=\lambdat (C_{0}+p\mflt C_{1})+\lambdat(1-\mflt)C_{1}-p(\mflo C_{1}+C_{2}).$$
Hence, if $\mathfrak{X}_{1}\in\filt \mfm$, then we get $\val{C_{0}}\geq1$, $\val{C_{1}}\geq1$,
\begin{equation}\label{D1-first}
\Val{p\lambda\mflo C_{1}+p\lambda C_{2}-\lambdat(\mflo-\lambdat)C_{1}}\geq\val{p^{2}\lambdat},
\end{equation}
and
\begin{equation}\label{D1-second}
\val{V}\geq \val{W}.
\end{equation}
The inequality (\ref{D1-first}) is equivalent to $$\val{C_{2}}\geq\val{p\lambdat}-\val\lambda$$ since $\val{C_{1}}\geq1$ and $\val{\mflo-\lambdat}\geq1$. The inequality (\ref{D1-second}) is equivalent to
\begin{equation*}
\val{\lambdat C_{0}-pC_{2}}\geq\val{p\lambda\lambdat},
\end{equation*}
which implies $$\val{C_{0}}\geq2-\val\lambda$$ since $\val{C_{2}}\geq\val{p\lambdat}-\val\lambda$.

Finally, we check $\vphi(\mathfrak{X}_{1})\in p^{2}\mfm$. Using the fact $\vphi(\frac{u-p}{p})\equiv\gamma-1$ modulo $pS$, $\vphi(\mathfrak{X}_{1})\equiv\lambda C_{0}\Big(\Baseo-\frac{\lambdat(\gamma-1)}{p}\Baset\Big)+ p\lambda C_{1}(\gamma-1)\Big(\Baseo-\frac{\gamma-1}{\lambda}\Baseth-\frac{\lambdat(\gamma-1)}{p}\Baset\Big)+ \lambda C_{2}(\gamma-1)\Baset$ modulo $p^{2}\mfm$. Then $\vphi(\mathfrak{X}_{1})\equiv-\lambda\frac{\lambdat C_{0}-pC_{2}}{p}(\gamma-1)\Baset$ modulo $p^{2}\mfm$ since $\val{C_{0}}\geq2-\val\lambda$ and $\val{C_{1}}\geq1$, and so $\vphi(\mathfrak{X}_{1})\equiv0$ modulo $p^{2}\mfm$ since $\val{\lambdat C_{0}-pC_{2}}\geq\val{p\lambda\lambdat}$.  Thus $\vphi(\filt \mfm)\subset p^{2}\mfm$.
\end{proof}

\subsection{Strongly divisible modules: the second case}\label{subsec-D1second}
In this subsection, we construct strongly divisible modules in $\mfd_{[\frac{1}{2},1]}$ under the assumption $\mathbf{H(1,2)}$ as at the beginning of this section.

We first define two sequences $G_{m}$ and $H_{m}$ for $m\geq0$ recursively as follows: $G_{0}=H_{0}=1$ and
$$G_{m+1}=(1-\mflt)[(1-\mflt)H_{m}+\frac{p\lambda}{\lambdat}G_{m}]^{2};$$
$$H_{m+1}=G_{m+1}+\frac{p}{\lambdat}\big([(\mflo-\lambdat)-\lambda(1-\mflt)][(1-\mflt)H_{m}+\frac{p\lambda}{\lambdat}G_{m}]+ p\lambda G_{m}\big)G_{m}.$$
We prove that the sequence $G_{m}/H_{m}$ converses in $1+\maxi{E}$. The limit appears in the coefficients of the strongly divisible modules in Proposition \ref{prop-D1second}.

\begin{lemm}
Keep the assumption $\mathbf{H(1,2)}$. Then, for $m\geq0$,
\begin{enumerate}
\item $\val{G_{m}-H_{m}}-\val{G_{m}}\geq\min\big\{\Val{\frac{p}{\lambdat}[(\mflo-\lambdat)-\lambda(1-\mflt)]}-2\val{1-\mflt}, 3[\val\lambda-\val{1-\mflt}]\big\}>0$;
\item $\Val{(1-\mflt)H_{m}+\frac{p\lambda}{\lambdat}G_{m}}=\Val{1-\mflt}+\val{H_{m}}$
\item $\val{G_{m}H_{m+1}-H_{m}G_{m+1}}-\val{H_{m}H_{m+1}}\geq(m+1)\min\big\{\Val{\frac{p}{\lambdat} [(\mflo-\lambdat)-\lambda(1-\mflt)]}-2\val{1-\mflt}, 3[\val{\lambda}-\val{1-\mflt}]\big\}$.
\end{enumerate}
\end{lemm}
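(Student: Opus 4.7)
The plan is to follow exactly the same strategy used for the analogous lemma in Section~\ref{subsec-D0second} (the $\mathbf{H(0,2)}$ case), with the substitutions $(\mflo-1)\leftrightarrow(1-\mflt)$ and $-\lambdat/\lambda\leftrightarrow p\lambda/\lambdat$ in the recurrences. Part (2) is a formal consequence of part (1): once (1) gives $\val{G_m}=\val{H_m}$, it suffices to observe that the first summand of $(1-\mflt)H_m+\tfrac{p\lambda}{\lambdat}G_m$ dominates the second. This is because $\val{1-\mflt}<\val\lambda$ by $\mathbf{H(1,2)}$, while $\val{p\lambda/\lambdat}=3\val\lambda-1\geq\val\lambda$ (using $\frac{1}{2}\leq\val\lambda$ and $\val\lambdat=2-2\val\lambda$), so $\val{(1-\mflt)H_m}<\val{(p\lambda/\lambdat)G_m}$ and the equality in (2) follows.

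For (1), I would induct on $m$. The case $m=0$ is trivial since $G_0=H_0=1$. For the inductive step, compute
\[
G_{m+1}-H_{m+1}=-\tfrac{p}{\lambdat}\big([(\mflo-\lambdat)-\lambda(1-\mflt)][(1-\mflt)H_m+\tfrac{p\lambda}{\lambdat}G_m]+p\lambda G_m\big)G_m.
\]
The inductive hypothesis (together with part (2) at level $m$) gives $\val{G_m}=\val{H_m}$ and $\Val{(1-\mflt)H_m+\tfrac{p\lambda}{\lambdat}G_m}=\val{1-\mflt}+\val{H_m}$, from which $\val{G_{m+1}}=3\val{1-\mflt}+2\val{H_m}$. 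Bounding each term of the bracket separately and subtracting $\val{G_{m+1}}$ yields the two candidates $\Val{\tfrac{p}{\lambdat}[(\mflo-\lambdat)-\lambda(1-\mflt)]}-2\val{1-\mflt}$ and $\val{p^{2}\lambda/\lambdat}-3\val{1-\mflt}=3[\val\lambda-\val{1-\mflt}]$, both strictly positive under $\mathbf{H(1,2)}$ (the first by $\Val{\lambda(\mflo-\lambdat)}\geq\val{p(1-\mflt)}$, the second by $\val{1-\mflt}<\val\lambda$).

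For (3), again induct on $m$. The case $m=0$ amounts to $G_0H_1-H_0G_1=H_1-G_1$ and is a direct computation. For the inductive step, the crux is an algebraic identity of the shape
\[
G_mH_{m+1}-H_mG_{m+1}=(W_1+W_2+W_3)(G_{m-1}H_m-H_{m-1}G_m),
\]
analogous to the one derived in Section~\ref{subsec-D0second}, with the $W_i$ being explicit products involving $(\mflo-\lambdat)-\lambda(1-\mflt)$, $p\lambda$, $\lambdat$, and the quantities $(1-\mflt)H_j+\tfrac{p\lambda}{\lambdat}G_j$ for $j\in\{m-1,m\}$. Once the identity is in place, parts (1) and (2) give the valuation of each $W_i$ in terms of $\val{H_{m-1}H_m}$, and each $W_i$ is seen to contribute at least the per-step increment $\min\{\Val{\tfrac{p}{\lambdat}[(\mflo-\lambdat)-\lambda(1-\mflt)]}-2\val{1-\mflt},\,3[\val\lambda-\val{1-\mflt}]\}$, at which point the inductive hypothesis closes the argument. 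The main obstacle is isolating this identity: one must write $G_mH_{m+1}-H_mG_{m+1}=G_m(H_{m+1}-G_{m+1})+(G_m-H_m)G_{m+1}$, substitute the recurrences, and re-group the resulting polynomials in $G_{m-1},H_{m-1},G_m,H_m$ so that the factor $G_{m-1}H_m-H_{m-1}G_m$ is extracted by making crucial use of the symmetric combinations $G_{m-1}^{2}G_{m+1}-G_m^{3}$ and the analogous cross terms. This is a purely formal but delicate manipulation that exactly mirrors the one executed in detail in Section~\ref{subsec-D0second}.
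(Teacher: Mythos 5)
Your plan is correct and coincides with the paper's own proof: part (2) follows formally from part (1) by comparing $\val{1-\mflt}<\val{\lambda}\leq\val{p\lambda/\lambdat}$; part (1) is an induction whose inductive step evaluates $G_{m+1}-H_{m+1}$ exactly as you describe; and part (3) rests on an identity expressing $G_mH_{m+1}-H_mG_{m+1}$ as a multiple of $G_{m-1}H_m-H_{m-1}G_m$, derived from the decomposition $G_m(H_{m+1}-G_{m+1})+(G_m-H_m)G_{m+1}$. One small slip: in the paper this identity has the form $-(W_1+W_2)(G_{m-1}H_m-H_{m-1}G_m)$ with only two bracketed terms (the three-term shape belongs to the $\mathbf{H(0,3)}$ and $\mathbf{H(1,3)}$ cases), but the exact grouping and sign are immaterial to the valuation estimate.
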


\begin{proof}
(2) is immediate from (1) since (1) implies $\val{G_{m}}=\val{H_{m}}$ for all $m\geq0$ and since $\val{1-\mflt}<\val{\lambda}\leq\val{\frac{p\lambda}{\lambdat}}$.

We prove (1) by induction. For $m=0$, it is trivial. Assume that (1) is true for $m$. Then $\val{G_{m}}=\val{H_{m}}$ and $\val{(1-\mflt)H_{m}+\frac{p\lambda}{\lambdat}G_{m}}=\val{(1-\mflt)H_{m}}$. Hence, we have $\val{G_{m+1}}=3\val{1-\mflt}+2\val{H_{m}}$ and
$$
\begin{aligned}
\val{&G_{m+1}-H_{m+1}}-\val{G_{m+1}}\\
=&\Val{\frac{p}{\lambdat}\big([(\mflo-\lambdat)-\lambda(1-\mflt)][(1-\mflt)H_{m}+\frac{p\lambda}{\lambdat}G_{m}]+p\lambda G_{m}\big)G_{m}}-\val{G_{m+1}}\\
\geq&\min\big\{\Val{\frac{p}{\lambdat}[(\mflo-\lambdat)-\lambda(1-\mflt)](1-\mflt)H_{m}G_{m}},\val{\frac{p^{2}\lambda}{\lambdat} G_{m}^{2}}\big\}-\val{G_{m+1}}\\
=&\min\big\{\Val{\frac{p}{\lambdat}[(\mflo-\lambdat)-\lambda(1-\mflt)]}-2\val{1-\mflt},3[\val\lambda-\val{1-\mflt}]\big\}>0.
\end{aligned}
$$
Hence, (1) holds by induction.

For (3), we induct on $m$ as well. If $m=0$, then $G_{0}H_{1}-H_{0}G_{1}= H_{1}-G_{1}= \frac{p}{\lambdat}\big([(\mflo-\lambdat)-\lambda(1-\mflt)][(1-\mflt)+\frac{p\lambda}{\lambdat}]+p\lambda\big)$. Hence, it holds for $m=0$.

We prove the following identity: for $m\geq1$,
$$G_{m}H_{m+1}-H_{m}G_{m+1}=-(W_{1}+W_{2})(G_{m-1}H_{m}-H_{m-1}G_{m}),$$
where $W_{1}=\frac{p}{\lambdat}[(\mflo-\lambdat)-\lambda(1-\mflt)](1-\mflt)^{2}[(1-\mflt)H_{m-1}+ \frac{p\lambda}{\lambdat}G_{m-1}][(1-\mflt)H_{m}+\frac{p\lambda}{\lambdat}G_{m}]$ and  $W_{2}=\frac{p^{2}\lambda}{\lambdat}(1-\mflt)^{2} \big([(1-\mflt)H_{m-1}+\frac{p\lambda}{\lambdat}G_{m-1}]G_{m}+ G_{m-1}[(1-\mflt)H_{m}+\frac{p\lambda}{\lambdat}G_{m}]\big)$.

Indeed, $G_{m}H_{m+1}-H_{m}G_{m+1}=G_{m}(H_{m+1}-G_{m+1})+(G_{m}-H_{m})G_{m+1}= \frac{p}{\lambdat}\big([(\mflo-\lambdat)-\lambda(1-\mflt)][(1-\mflt)H_{m}+\frac{p\lambda}{\lambdat}G_{m}]+p\lambda G_{m}\big)G_{m}^{2}- \frac{p}{\lambdat}\big([(\mflo-\lambdat)-\lambda(1-\mflt)][(1-\mflt)H_{m-1}+\frac{p\lambda}{\lambdat}G_{m-1}]+p\lambda G_{m-1}\big)G_{m-1}G_{m+1}=  \frac{p}{\lambdat}[(\mflo-\lambdat)-\lambda(1-\mflt)] \big(G_{m}^{2}[(1-\mflt)H_{m}+\frac{p\lambda}{\lambdat}G_{m}]- G_{m-1}[(1-\mflt)H_{m-1}+\frac{p\lambda}{\lambdat}G_{m-1}]G_{m+1}\big)+ \frac{p^{2}\lambda}{\lambdat}(G_{m}^{3}-G_{m-1}^{2}G_{m+1})$. However, $G_{m}^{2}[(1-\mflt)H_{m}+\frac{p\lambda}{\lambdat}G_{m}]-G_{m-1}[(1-\mflt)H_{m-1}+\frac{p\lambda}{\lambdat}G_{m-1}]G_{m+1}= (1-\mflt)[(1-\mflt)H_{m-1}+\frac{p\lambda}{\lambdat}G_{m-1}][(1-\mflt)H_{m}+\frac{p\lambda}{\lambdat}G_{m}] \big([(1-\mflt)H_{m-1}+\frac{p\lambda}{\lambdat}G_{m-1}]G_{m}-G_{m-1}[(1-\mflt)H_{m}+\frac{p\lambda}{\lambdat}G_{m}]\big) =-(1-\mflt)^{2} [(1-\mflt)H_{m-1}+\frac{p\lambda}{\lambdat}G_{m-1}][(1-\mflt)H_{m}+\frac{p\lambda}{\lambdat}G_{m}] (G_{m-1}H_{m}-H_{m-1}G_{m})$ and $G_{m}^{3}-G_{m-1}^{2}G_{m+1}= (1-\mflt)\big([(1-\mflt)H_{m-1}+\frac{p\lambda}{\lambdat}G_{m-1}]^{2}G_{m}^{2}- G_{m-1}^{2}[(1-\mflt)H_{m}+\frac{p\lambda}{\lambdat}G_{m}]^{2}\big)= (1-\mflt)\big(G_{m-1}[(1-\mflt)H_{m}+\frac{p\lambda}{\lambdat}G_{m}]+ [(1-\mflt)H_{m-1}+\frac{p\lambda}{\lambdat}G_{m-1}]G_{m}\big)(1-\mflt)(H_{m-1}G_{m}-G_{m-1}H_{m})= -(1-\mflt)^{2}\big([(1-\mflt)H_{m-1}+\frac{p\lambda}{\lambdat}G_{m-1}]G_{m}+ G_{m-1}[(1-\mflt)H_{m}+\frac{p\lambda}{\lambdat}G_{m}]\big)(G_{m-1}H_{m}-H_{m-1}G_{m})$. Hence, we proved the identity.

The identity implies that $\val{G_{m}H_{m+1}-H_{m}G_{m+1}}\geq\min\{\val{W_{1}},\val{W_{2}}\}+ \val{G_{m-1}H_{m}-H_{m-1}G_{m}}$. By parts (1) and (2), $\val{W_{1}}=\Val{\frac{p}{\lambdat}[(\mflo-\lambdat)-\lambda(1-\mflt)]}+4\val{1-\mflt}+\val{H_{m-1}H_{m}}$ and $\val{W_{2}}=3\val\lambda+3\val{1-\mflt}+\val{H_{m-1}H_{m}}$. Thus, $\val{G_{m}H_{m+1}-H_{m}G_{m+1}}\geq \min\big\{\Val{\frac{p}{\lambdat} [(\mflo-\lambdat)-\lambda(1-\mflt)]}-2\val{1-\mflt}, 3[\val{\lambda}-\val{1-\mflt}]\big\}+6\val{1-\mflt}+\val{H_{m-1}H_{m}}+ m\min\big\{\Val{\frac{p}{\lambdat} [(\mflo-\lambdat)-\lambda(1-\mflt)]}-2\val{1-\mflt}, 3[\val{\lambda}-\val{1-\mflt}]\big\}+\val{H_{m-1}H_{m}}$ by induction hypothesis. Hence, it holds by induction.
\end{proof}

The assumption $\mathbf{H(1,2)}$ implies that the quantities in the set of the part (3) of the lemma above are strictly positive. So the part (3) of the lemma above says that $\val{G_{m}H_{m+1}-H_{m}G_{m+1}}-\val{H_{m}H_{m+1}}$ approaches $\infty$ as $m$ goes to $\infty$. That is, the sequence $G_{m}/H_{m}$ is Cauchy. We let $$\Delta_{\tiny{\ref{subsec-D1second}}}:=\lim_{m\mapsto\infty}\frac{G_{m}}{H_{m}}.$$
Note that $\Delta_{\tiny{\ref{subsec-D1second}}}$ depends on the values of the parameters $\lambda,\lambdat,\mflo,\mflt$.

The part (1) of the lemma implies that $\val{G_{m}}=\val{H_{m}}$ for all $m\geq0$ and
\begin{multline*}
\val{1-\Delta_{\tiny{\ref{subsec-D1second}}}}\geq\min\big\{3[\val{\lambda}-\val{1-\mflt}],\\
\Val{\frac{p}{\lambdat} [(\mflo-\lambdat)-\lambda(1-\mflt)]}-2\val{1-\mflt} \big\}>0,
\end{multline*}
which immediately implies that $$\frac{\lambdat(1-\mflt)+p\lambda\Delta_{\tiny{\ref{subsec-D1second}}}}{\lambdat(1-\mflt)}\in1+\maxid.$$
In particular, $\Val{\lambdat(1-\mflt)+p\lambda\Delta_{\tiny{\ref{subsec-D1second}}}}=\Val{\lambdat(1-\mflt)}$.

It is also easy to check that $\Delta_{\tiny{\ref{subsec-D1second}}}$ satisfies the equation
\begin{multline}\label{D1-third}
(1-\mflt)[\lambdat(1-\mflt)+p\lambda\Delta_{\tiny{\ref{subsec-D1second}}}]^{2} (1-\Delta_{\tiny{\ref{subsec-D1second}}})-\\
 p\big([(\mflo-\lambdat)-\lambda(1-\mflt)][\lambdat(1-\mflt)+p\lambda\Delta_{\tiny{\ref{subsec-D1second}}}]+p\lambda\lambdat \Delta_{\tiny{\ref{subsec-D1second}}}\big)\Delta_{\tiny{\ref{subsec-D1second}}}^{2}=0,
\end{multline}
by taking the limits of $G_{m+1}/H_{m+1}= (1-\mflt)[(1-\mflt)H_{m}+\frac{p\lambda}{\lambdat}G_{m}]^{2}/\big[(1-\mflt)[(1-\mflt)H_{m}+ \frac{p\lambda}{\lambdat}G_{m}]^{2}+\frac{p}{\lambdat}\big([(\mflo-\lambdat)-\lambda(1-\mflt)][(1-\mflt)H_{m}+\frac{p\lambda}{\lambdat}G_{m}]+ p\lambda G_{m}\big)G_{m}\big]$. The equation plays a crucial role in the proof of the following proposition.

\begin{prop}\label{prop-D1second}
Keep the assumption $\mathbf{H(1,2)}$. Then $\mfm_{[\frac{1}{2},1]}:=S_{\roi}(E_{1},E_{2},E_{3})$ is a strongly divisible module in $\mfd_{[\frac{1}{2},1]}$, where
    $$\left\{
        \begin{array}{ll}
          E_{1}=\mathfrak{U}_{1}+\frac{\lambdat^{2} (1-\mflt)(\gamma-1)}{\lambda [\lambdat(1-\mflt)+p\lambda\Delta_{\tiny{\ref{subsec-D1second}}}]}(\lambdat\baset+\baseth) & \hbox{} \\
          E_{2}=\frac{\lambdat(1-\mflt)}{p}(\lambdat\baset+\baseth)- \lambda\Delta_{\tiny{\ref{subsec-D1second}}}(\gamma-1)\baseth & \hbox{} \\
          E_{3}=\frac{\lambda[\lambdat(1-\mflt)+p\lambda\Delta_{\tiny{\ref{subsec-D1second}}}]}{p}\baseth. & \hbox{}
        \end{array}
      \right.$$
\end{prop}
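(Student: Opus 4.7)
The plan is to follow closely the template of Proposition~\ref{prop-D0second}, adapting the bookkeeping to the filtration and Frobenius data of $D_{[\frac{1}{2},1]}$. Write $\Delta:=\Delta_{\tiny{\ref{subsec-D1second}}}$ and $\mfm:=\mfm_{[\frac{1}{2},1]}$ for brevity. The first step is to check that $\mfm$ is stable under $\vphi$ and $N$: this is a direct calculation of $\vphi(E_{i})$ and $N(E_{i})$ modulo $\maxi{E}\mfm$ for $i=1,2,3$, exploiting $\vphi(\gamma)\in p^{p-1}S$ and $N(\gamma)=-p[\gamma+(u-p)^{p-1}]$; the generators $E_{1},E_{2},E_{3}$ are chosen so that $\vphi(\baseo),\vphi(\baset),\vphi(\baseth)$ assemble into $S_{\roi}$-combinations of the $E_{i}$ modulo $\maxi{E}\mfm$.

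The bulk of the work is in verifying $\vphi(\filt\mfm)\subset p^{2}\mfm$. Since $\filt S\cdot\mfm\subset\filt\mfm$ and $\vphi(\filt S\cdot\mfm)\subset p^{2}\mfm$, it suffices to check $\vphi(\mathfrak{X}_{1})\in p^{2}\mfm$ under the hypothesis $\mathfrak{X}_{1}\in\mfm$. I would expand $\mathfrak{X}_{1}$ in the basis $\{E_{1},E_{2},E_{3}\}$ modulo $\filt S\cdot\mfm$ and extract three conditions on $C_{0},C_{1},C_{2}$: integrality bounds $\val{C_{0}}\geq\val{\lambda\lambdat}$ and $\val{C_{1}}\geq 1$; a first inequality, call it~(I), coming from the $E_{2}$-coefficient, which will propagate under~$\mathbf{H(1,2)}$ to a bound $\val{C_{2}}\geq-\val{\lambda}$; and a second inequality of the form $\val{V}\geq\val{W}$ coming from the $E_{3}$-coefficient, with $W=p\lambda[\lambdat(1-\mflt)+p\lambda\Delta]^{2}$ (or a similarly normalized denominator) and $V$ an explicit $E$-linear combination of $C_{0},C_{1},C_{2}$.

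The main obstacle will be extracting a usable cancellation from $\val{V}\geq\val{W}$. Mimicking the derivation in Proposition~\ref{prop-D0second}, I would multiply $V$ by $\Delta$, together with $[\lambdat(1-\mflt)+p\lambda\Delta]$ as needed, and regroup so as to isolate the combination
\[
(1-\mflt)[\lambdat(1-\mflt)+p\lambda\Delta]^{2}(1-\Delta)-p\bigl([(\mflo-\lambdat)-\lambda(1-\mflt)][\lambdat(1-\mflt)+p\lambda\Delta]+p\lambda\lambdat\Delta\bigr)\Delta^{2},
\]
which vanishes by the defining equation~(\ref{D1-third}). The residue is a clean inequality analogous to~(\ref{D0-eleventh}), which combines with~(I) to control the relevant linear combinations of $C_{0},C_{1},C_{2}$.

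Finally, I would expand $\vphi(\mathfrak{X}_{1})$ using $\vphi(\frac{u-p}{p})\equiv\gamma-1$ modulo $pS$, discard every term whose scalar is already in $p^{2}S_{\roi}$ thanks to the established bounds on $C_{0},C_{1},C_{2}$, and verify that what remains is annihilated modulo $p^{2}\mfm$ by~(I) together with the cancellation extracted from~(\ref{D1-third}). The delicate point is ensuring that the coefficients in the final $\Baset$- and $\Baseth$-expansion of $\vphi(\mathfrak{X}_{1})$ reassemble exactly into the two controlled combinations; as in the previous case, the choice of generators $E_{1},E_{2},E_{3}$ is designed precisely to make this match work.
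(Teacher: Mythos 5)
Your outline matches the paper's proof in its broad skeleton, which is indeed a near-mechanical adaptation of Proposition~\ref{prop-D0second}, but you have mis-extrapolated two details in a way that would derail the execution. First, the inequality you call (I) — the one coming from the $\Baset$-coefficient, which in the paper is~(\ref{D1-fourth}) — does \emph{not} propagate to a bound $\val{C_{2}}\geq-\val{\lambda}$, nor to any bound on $C_2$ that the proof uses. In the $D_{[0,\frac{1}{2}]}$ case that analogue does yield $\val{C_2}\geq-\val\lambda$, and that bound feeds into the final verification, but here the structure is genuinely different: the paper remarks after the proof that~(\ref{D1-fourth}) turns out to be redundant (it is automatically implied by the other conditions), and the usable control on $C_2$ actually comes from~(\ref{D1-seventh}), itself a consequence of the $\Baseth$-coefficient inequality $\val{V}\geq\val{W}$, not of (I).

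Second, and relatedly, your plan for the final step — ``verify that what remains is annihilated modulo $p^2\mfm$ by (I) together with the cancellation extracted from~(\ref{D1-third})'' — does not match how the argument closes. After the cancellation via~(\ref{D1-third}) one obtains the three-term inequality~(\ref{D1-sixth}); from it, using $\val{C_1}\geq1$, one peels off the two-term inequality~(\ref{D1-seventh}). The surviving $\Baset$- and $\Baseth$-coefficients of $\vphi(\mathfrak{X}_1)$ are then killed precisely by~(\ref{D1-seventh}) and~(\ref{D1-sixth}) respectively; (I) plays no role at all. (A smaller point: the paper multiplies $V$ only by $\Delta$, not by $\Delta[\lambdat(1-\mflt)+p\lambda\Delta]$, and $W=\lambda\lambdat^{2}(1-\mflt)[\lambdat(1-\mflt)+p\lambda\Delta]^{2}$, but you hedged on both.) In short: your plan is the right template, but if you run it expecting (I) to carry the day and $\val{C_2}\geq-\val\lambda$ to be the operative bound, you will find the final congruence doesn't close; you need instead to extract a \emph{second} usable inequality from the $V$-condition itself.
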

Note that $S_{\roi}(\Baseo,\Baset,\Baseth)=S_{\roi}(\Baseo,\Baset,\frac{p(1-\mflt)}{\lambda}\baseth)$.

\begin{proof}
During the proof, we let $\Delta:=\Delta_{\tiny{\ref{subsec-D1second}}}$ and $\mfm:=\mfm_{[\frac{1}{2},1]}$ for brevity. It is routine to check that $\vphi(\Baseo)\equiv-\frac{p\lambda(1-\mflt)-p(\mflo-\lambdat)}{\lambda\lambdat(1-\mflt)}\Baseth$, $\vphi(\Baset)\equiv\Baseth$, and $\vphi(\Baseth)\equiv N(\Baseo)\equiv N(\Baset)\equiv N(\Baseth)\equiv 0$ modulo $\maxi{E}\mfm$. Hence, $\mfm$ is stable under $\vphi$ and $N$.

We check that $\vphi(\filt \mfm)\subset p^{2}\mfm$. It is easy to check that $\filt S\cdot\mfm\subset\filt\mfm$, and so it is enough to check $\vphi(\mathfrak{X}_{1})\in p^{2}\mfm$, since $\vphi(\filt S\cdot\mfm)\subset p^{2}\mfm$. We compute  $\filt \mfm$.
$\mathfrak{X}_{1}\equiv C_{0}\Big(\frac{1}{p}\Baseo-\frac{(\mflo-\lambdat)[\lambdat(1-\mflt)+p\lambda\Delta]+ p\lambda\lambdat\Delta}{\lambda\lambdat(1-\mflt)[\lambdat(1-\mflt)+p\lambda\Delta]}\Baset+ \frac{p(\mflo-\lambdat)+p\lambdat}{\lambdat^{2}(1-\mflt)}\Baset+ \frac{p(\mflo-\lambdat)[\lambdat(1-\mflt)+p\lambda\Delta]\Delta+ p^{2}\lambda\lambdat\Delta^{2}}{\lambda\lambdat(1-\mflt)[\lambdat(1-\mflt)+p\lambda\Delta]^{2}}\Baseth+ \frac{(1-\mflt)+p\mflt}{\lambda[\lambdat(1-\mflt)+p\lambda\Delta]}\Baseth- \frac{p(\mflo-\lambdat)+p\lambdat}{\lambda\lambdat^{2}(1-\mflt)}\Baseth\Big)+ (u-p)\Big(\frac{C_{1}}{p}\Baseo+ \frac{p\lambda[\lambdat(1-\mflt)+p\lambda\Delta](\mflo C_{1}+C_{2})-\lambdat\mflo[\lambdat(1-\mflt)+p\lambda\Delta]C_{1}+\lambdat^{3}(1-\mflt)C_{1}}{\lambda\lambdat^{2}(1-\mflt) [\lambdat(1-\mflt)+p\lambda\Delta]}\Baset+\frac{V}{W}\Baseth\Big)$ modulo $\filt S\cdot\mfm$, where $W=\lambda\lambdat^{2}(1-\mflt)[\lambdat(1-\mflt)+p\lambda\Delta]^{2}$ and
\begin{multline*}
V=\lambdat^{2}(1-\mflt)[\lambdat(1-\mflt)+p\lambda\Delta] (C_{0}+p\mflt C_{1})+\\ p\lambdat\mflo [\lambdat(1-\mflt)+p\lambda\Delta]\Delta C_{1}- p\lambdat^{3}(1-\mflt)\Delta C_{1}+\\ \lambdat^{2}(1-\mflt)^{2}[\lambdat(1-\mflt)+p\lambda\Delta]C_{1}- p[\lambdat(1-\mflt)+p\lambda\Delta]^{2}(\mflo C_{1}+C_{2}).
\end{multline*}
Hence, if $\mathfrak{X}_{1}\in\filt \mfm$, then we get $\val{C_{0}}\geq\val{\lambda\lambdat}=2-\val\lambda$, $\val{C_{1}}\geq1$,
\begin{multline}\label{D1-fourth}
\Val{p\lambda[\lambdat(1-\mflt)+p\lambda\Delta](\mflo C_{1}+C_{2})-\lambdat\mflo[\lambdat(1-\mflt)+p\lambda\Delta]C_{1}+\\
\lambdat^{3}(1-\mflt)C_{1}}\geq \Val{\lambda\lambdat^{2}(1-\mflt) [\lambdat(1-\mflt)+p\lambda\Delta]},
\end{multline}
and
\begin{equation}\label{D1-fifth}
\val{V}\geq \val{W}.
\end{equation}
Using the inequality $\val{C_{1}}\geq1$, one can readily check that
\begin{multline*}
V\equiv\lambdat^{2}(1-\mflt)[\lambdat(1-\mflt)+p\lambda\Delta] C_{0}+ p\lambdat(\mflo-\lambdat)[\lambdat(1-\mflt)+p\lambda\Delta]\Delta C_{1}+\\ p^{2}\lambda\lambdat^{2}\Delta^{2}C_{1}+ \lambdat^{2}(1-\mflt)^{2}[\lambdat(1-\mflt)+p\lambda\Delta]C_{1}-p[\lambdat(1-\mflt)+p\lambda\Delta]^{2}C_{2}
\end{multline*}
modulo $(W)$. Thus
\begin{multline*}
\Delta\cdot V\equiv [\lambdat(1-\mflt)+p\lambda\Delta]\big(\lambdat^{2}(1-\mflt)\Delta C_{0}+\\
\lambdat(1-\mflt)[\lambdat(1-\mflt)+p\lambda\Delta] C_{1}- p[\lambdat(1-\mflt)+p\lambda\Delta]\Delta C_{2}\big)+X
\end{multline*} modulo $(W)$,
where
\begin{multline*}
X=\lambdat\big[\lambdat(1-\mflt)^{2}[\lambdat(1-\mflt)+p\lambda\Delta]\Delta -(1-\mflt)[\lambdat(1-\mflt)+p\lambda\Delta]^{2}+\\
p\big((\mflo-\lambdat) [\lambdat(1-\mflt)+p\lambda\Delta]+p\lambda\lambdat\Delta\big)\Delta^{2}\big]C_{1}.
\end{multline*}
But $X=0$ by the equation (\ref{D1-third}). Hence, from the inequality (\ref{D1-fifth}), we have
\begin{multline}\label{D1-sixth}
\Val{\lambdat^{2}(1-\mflt)\Delta C_{0}+
\lambdat(1-\mflt)[\lambdat(1-\mflt)+p\lambda\Delta] C_{1}-\\
p[\lambdat(1-\mflt)+p\lambda\Delta]\Delta C_{2}}\geq\Val{\lambda\lambdat^{2}(1-\mflt)[\lambdat(1-\mflt)+p\lambda\Delta]}
\end{multline}
and it is immediate that the inequality (\ref{D1-sixth}) implies
\begin{multline}\label{D1-seventh}
\Val{\lambdat^{2}(1-\mflt) C_{0}-p[\lambdat(1-\mflt)+p\lambda\Delta]C_{2}} \\ \geq\Val{p\lambdat(1-\mflt)[\lambdat(1-\mflt)+p\lambda\Delta]}
\end{multline}
since $\val{C_{1}}\geq1$.

Finally, we check $\vphi(\mathfrak{X}_{1})\in p^{2}\mfm$. Using the fact that $\vphi(\frac{u-p}{p})\equiv\gamma-1$ modulo $pS$, it is easy to check that $\vphi(\mathfrak{X}_{1})\equiv\lambda C_{0}\Big(\Baseo-\frac{p\lambdat(\gamma-1)}{\lambda[\lambdat(1-\mflt)+p\lambda\Delta]}\Baset- \frac{p^{2}\lambdat\Delta(\gamma-1)^{2}}{\lambda[\lambdat(1-\mflt)+p\lambda\Delta]^{2}}\Baseth\Big)+ p\lambda C_{1}(\gamma-1)\Big(\Baseo-\frac{p(\gamma-1)}{\lambda[\lambdat(1-\mflt)+p\lambda\Delta]}\Baseth -\frac{p\lambdat(\gamma-1)}{\lambda[\lambdat(1-\mflt)+p\lambda\Delta]}\Baset- \frac{p^{2}\lambdat\Delta(\gamma-1)^{2}}{\lambda[\lambdat(1-\mflt)+p\lambda\Delta]^{2}}\Baseth\Big)+ pC_{2}(\gamma-1)\Big(\frac{p}{\lambdat(1-\mflt)}\Baset+ \frac{p^{2}\Delta(\gamma-1)}{\lambdat(1-\mflt)[\lambdat(1-\mflt)+p\lambda\Delta]}\Baseth\Big)$ modulo $p^{2}\mfm$. Since $\val{C_{0}}\geq2-\val\lambda$ and $\val{C_{1}}\geq1$, $\vphi(\mathfrak{X}_{1})\equiv -p\frac{\lambdat^{2}(1-\mflt)C_{0}-p[\lambdat(1-\mflt)+p\lambda\Delta]C_{2}}{\lambdat(1-\mflt) [\lambdat(1-\mflt)+p\lambda\Delta]}(\gamma-1)\Baset- p^{2}\frac{\lambdat^{2}(1-\mflt)\Delta C_{0}+ \lambdat(1-\mflt)[\lambdat(1-\mflt)+p\lambda\Delta]C_{1}- p[\lambdat(1-\mflt)+p\lambda\Delta] \Delta C_{2}}{\lambdat(1-\mflt)[\lambdat(1-\mflt)+p\lambda\Delta]^{2}}(\gamma-1)^{2}\Baseth$ modulo $p^{2}\mfm$.
Then $\vphi(\mathfrak{X}_{1})\equiv0$ modulo $p^{2}\mfm$ by the inequalities (\ref{D1-sixth}) and (\ref{D1-seventh}). Thus $\vphi(\filt \mfm)\subset p^{2}\mfm$.
\end{proof}

Note that the inequality (\ref{D1-seventh}) implies $\val{C_{2}}\geq\min\{\val{\frac{p\lambdat}{\lambda}},\val{\lambdat(1-\mflt)}\}$ since $\val{C_{0}}\geq2-\val\lambda$, and so one can readily check that $p\lambda[\lambdat(1-\mflt)+p\lambda\Delta](\mflo C_{1}+C_{2})-\lambdat\mflo[\lambdat(1-\mflt)+p\lambda\Delta]C_{1}+
\lambdat^{3}(1-\mflt)C_{1}\equiv0$ modulo $(\lambda\lambdat^{2}(1-\mflt) [\lambdat(1-\mflt)+p\lambda\Delta])$, that is, the inequality (\ref{D1-fourth}) does not give any stronger condition.

\subsection{Strongly divisible modules: the third case}\label{subsec-D1third}
In this subsection, we construct strongly divisible modules in $\mfd_{[\frac{1}{2},1]}$ under the assumption $\mathbf{H(1,3)}$ as at the beginning of this section.

We first define two sequences $G_{m}$ and $H_{m}$ for $m\geq0$ recursively as follows: $G_{0}=H_{0}=1$ and
$$G_{m+1}=(\mflo H_{m}-\lambdat G_{m})^{2};$$ $$H_{m+1}=G_{m+1}-\lambda[(1-\mflt)(\mflo H_{m}-\lambdat G_{m})+p\lambda G_{m}]H_{m}.$$
We prove that the sequence $G_{m}/H_{m}$ converses in $1+\maxi{E}$. The limit appears in the coefficients of the strongly divisible modules in Proposition \ref{prop-D1third}.

\begin{lemm}
Keep the assumption $\mathbf{H(1,3)}$. Then, for $m\geq0$,
\begin{enumerate}
\item $\val{G_{m}-H_{m}}-\val{G_{m}}\geq\min\big\{\val{\lambda(1-\mflt)}-\val{\mflo-\lambdat}, \val{p\lambda^{2}}-2\val{\mflo-\lambdat}\big\}>0$;
\item $\val{\mflo H_{m}-\lambdat G_{m}}=\val{\mflo-\lambdat}+\val{H_{m}}$;
\item $\val{G_{m}H_{m+1}-H_{m}G_{m+1}}-\val{H_{m}H_{m+1}}\geq\min\big\{\Val{\lambda(1-\mflt)}-\val{\mflo-\lambdat}, \val{p\lambda^{2}}-2\val{\mflo-\lambdat}\big\}+m\min\big\{\Val{\lambda\lambdat(1-\mflt)}-2\val{\mflo-\lambdat}, \val{p\lambda^{2}}-2\val{\mflo-\lambdat},3[1-\val{\mflo-\lambdat}]\big\}$.
\end{enumerate}
\end{lemm}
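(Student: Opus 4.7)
The strategy is to mirror the argument used for the analogous lemma in Subsection~\ref{subsec-D0third}, since the sequences here satisfy a recursion of the same structural form, with the roles of $\mflt+p\lambda,\,\lambdat,\,\mflo-1,\,p\lambdat$ now played by $\mflo-\lambdat,\,\lambda,\,1-\mflt,\,p\lambda$. Setting $A_m:=\mflo H_m-\lambdat G_m$, the recursion becomes $G_{m+1}=A_m^{2}$ and $G_{m+1}-H_{m+1}=\lambda[(1-\mflt)A_m+p\lambda G_m]H_m$. Part (2) will follow immediately from (1): once $\val{G_m}=\val{H_m}$, the decomposition $A_m=(\mflo-\lambdat)H_m-\lambdat(G_m-H_m)$ has leading term of valuation $\val{\mflo-\lambdat}+\val{H_m}$, strictly smaller than that of $\lambdat(G_m-H_m)$ by virtue of $\val{\mflo-\lambdat}<\min\{1,\val{\lambda(1-\mflt)}\}$ in $\mathbf{H(1,3)}$.

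Part (1) I prove by induction on $m$, the base case $m=0$ being trivial. For the inductive step, assuming (1) and hence (2) at level $m$, one has $\val{G_{m+1}}=2\val{\mflo-\lambdat}+2\val{H_m}$ and
\begin{equation*}
\val{G_{m+1}-H_{m+1}}\geq\min\bigl\{\val{\lambda(1-\mflt)(\mflo-\lambdat)H_m^{2}},\;\val{p\lambda^{2}G_m H_m}\bigr\};
\end{equation*}
subtracting $\val{G_{m+1}}$ yields the asserted bound, and positivity follows from the hypotheses in $\mathbf{H(1,3)}$.

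Part (3) also proceeds by induction, via a two-step telescoping identity. Starting from
\begin{equation*}
G_m H_{m+1}-H_m G_{m+1}=(G_m-H_m)G_{m+1}-\lambda G_m H_m[(1-\mflt)A_m+p\lambda G_m],
\end{equation*}
substituting $G_m-H_m=\lambda[(1-\mflt)A_{m-1}+p\lambda G_{m-1}]H_{m-1}$ and $G_{m+1}=A_m^{2}$, and grouping terms by the coefficients $\lambda(1-\mflt)$ and $p\lambda^{2}$, one uses $G_m=A_{m-1}^{2}$ together with the elementary identities
\begin{equation*}
H_{m-1}A_m-A_{m-1}H_m=\lambdat(G_{m-1}H_m-H_{m-1}G_m),\quad G_{m-1}A_m-A_{m-1}G_m=\mflo(G_{m-1}H_m-H_{m-1}G_m)
\end{equation*}
to factor out $G_{m-1}H_m-H_{m-1}G_m$ from each of the two grouped brackets. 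This yields an identity of the form $G_m H_{m+1}-H_m G_{m+1}=(W_1+W_2)(G_{m-1}H_m-H_{m-1}G_m)$, with $\val{W_1}$ and $\val{W_2}$ bounded below by $\val{\lambda\lambdat(1-\mflt)}+2\val{\mflo-\lambdat}+\val{H_{m-1}H_m}$ and $\min\{\val{p\lambda^{2}\lambdat}+\val{\mflo-\lambdat},\val{p\lambda^{2}}+2\val{\mflo-\lambdat}\}+\val{H_{m-1}H_m}$ respectively. Applying the induction hypothesis and renormalizing via $\val{H_{m+1}}-\val{H_{m-1}}=4\val{\mflo-\lambdat}$ (which follows from (1), (2), and $G_{m+1}=A_m^{2}$) recovers precisely the three quantities appearing in the min on the right-hand side. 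The main obstacle is verifying the factorizations in this telescoping identity; the manipulations are lengthy but entirely parallel to those carried out in Subsection~\ref{subsec-D0third}, and both elementary identities above follow directly from $A_m=\mflo H_m-\lambdat G_m$.
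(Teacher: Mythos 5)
Your overall strategy is correct and mirrors the paper's: parts (1) and (2) go by the same induction, and for part (3) your telescoping identity is the same one the paper uses, though you package the $p\lambda^{2}$-weighted piece as a single term $W_2=p\lambda^{2}[\lambdat A_m G_{m-1}+\mflo A_{m-1}H_m]$ where the paper splits it as $p\lambda^{2}A_{m-1}A_m+p\lambda^{2}\lambdat[A_{m-1}G_m+G_{m-1}A_m]$; the two are algebraically identical (use $\mflo H_m=A_m+\lambdat G_m$), and estimating $\val{W_2}$ by the min of the two summands gives exactly the paper's $\min\{\val{p\lambda^{2}\lambdat}+\val{\mflo-\lambdat},\val{p\lambda^{2}}+2\val{\mflo-\lambdat}\}+\val{H_{m-1}H_m}$, so the decomposition and bounds are sound.

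However, the renormalization step as written is wrong. You claim $\val{H_{m+1}}-\val{H_{m-1}}=4\val{\mflo-\lambdat}$ follows from (1), (2), and $G_{m+1}=A_m^{2}$. What actually follows is $\val{H_{m+1}}=2\val{\mflo-\lambdat}+2\val{H_m}$, hence $\val{H_{m+1}}-\val{H_{m-1}}=4\val{\mflo-\lambdat}+\val{H_{m-1}}+\val{H_m}$, i.e.\ $\val{H_m H_{m+1}}=4\val{\mflo-\lambdat}+2\val{H_{m-1}H_m}$. This last form is the identity you actually need: the telescoping bound together with the induction hypothesis gives $\val{G_m H_{m+1}-H_m G_{m+1}}\geq[\,b+4\val{\mflo-\lambdat}+\val{H_{m-1}H_m}\,]+[\,\val{H_{m-1}H_m}+a+(m-1)b\,]$, where $a$ and $b$ denote the two minima in the statement, and subtracting $\val{H_m H_{m+1}}$ leaves exactly $a+mb$ because $4\val{\mflo-\lambdat}+2\val{H_{m-1}H_m}=\val{H_m H_{m+1}}$. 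So the argument closes, but your stated identity as written would not give the claimed cancellation; you should correct it to the form above.
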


\begin{proof}
(2) is immediate from (1) since (1) implies $\val{G_{m}}=\val{H_{m}}$ for all $m\geq0$ and $\val{\mflo H_{m}-\lambdat G_{m}}=\Val{(\mflo-\lambdat)H_{m}-\lambdat(G_{m}-H_{m})}=\val{\mflo-\lambdat}+\val{H_{m}}$.

We prove (1) by induction. For $m=0$, it is trivial. Assume that (1) is true for $m$. Then $\val{G_{m}}=\val{H_{m}}$ and
$\val{\mflo H_{m}-\lambdat G_{m}}=\val{\mflo-\lambdat}+\val{H_{m}}$. Hence, we have $\val{G_{m+1}}=2\val{\mflo-\lambdat}+2\val{H_{m}}$ and
$$
\begin{aligned}
\val{&G_{m+1}-H_{m+1}}-\val{G_{m+1}}\\
&=\Val{\lambda[(1-\mflt)(\mflo H_{m}-\lambdat G_{m})+p\lambda G_{m}]H_{m}}-\val{G_{m+1}}\\
&\geq\min\big\{\Val{\lambda(1-\mflt)(\mflo H_{m}-\lambdat G_{m})H_{m}},\val{p\lambda^{2} G_{m}H_{m}}\big\}-\val{G_{m+1}}\\
&=\min\big\{\Val{\lambda(1-\mflt)}-\val{\mflo-\lambdat},\val{p\lambda^{2}}-2\val{\mflo-\lambdat}\big\}>0.
\end{aligned}
$$
Hence, (1) holds by induction.

For (3), we induct on $m$ as well. If $m=0$, then $G_{0}H_{1}-H_{0}G_{1}=H_{1}-G_{1}= -\lambda[(1-\mflt)(\mflo-\lambdat)+p\lambda]$. So it holds for $m=0$.

We claim the following identity: for $m\geq1$, $$G_{m}H_{m+1}-H_{m}G_{m+1}=(W_{1}+W_{2}+W_{3})(G_{m-1}H_{m}-H_{m-1}G_{m}),$$ where
$W_{1}=\lambda\lambdat(1-\mflt)(\mflo H_{m-1}-\lambdat G_{m-1})(\mflo H_{m}-\lambdat G_{m})$,
$W_{2}=p\lambda^{2}(\mflo H_{m-1}-\lambdat G_{m-1})(\mflo H_{m}-\lambdat G_{m})$, and
$W_{3}=p\lambda^{2}\lambdat[(\mflo H_{m-1}-\lambdat G_{m-1})G_{m}+G_{m-1}(\mflo H_{m}-\lambdat G_{m})]$.

Indeed, $G_{m}H_{m+1}-H_{m}G_{m+1}=G_{m}(H_{m+1}-G_{m+1})+(G_{m}-H_{m})G_{m+1}= -\lambda G_{m}[(1-\mflt)(\mflo H_{m}-\lambdat G_{m})+p\lambda G_{m}]H_{m}+ \lambda[(1-\mflt)(\mflo H_{m-1}-\lambdat G_{m-1})+p\lambda G_{m-1}]H_{m-1}G_{m+1}= \lambda(1-\mflt)[(\mflo H_{m-1}-\lambdat G_{m-1})H_{m-1}G_{m+1}-G_{m}H_{m}(\mflo H_{m}-\lambdat G_{m})]+p\lambda^{2}(G_{m-1}H_{m-1}G_{m+1}-G_{m}^{2}H_{m})$. But $(\mflo H_{m-1}-\lambdat G_{m-1})H_{m-1}G_{m+1}-(\mflo H_{m}-\lambdat G_{m})G_{m}H_{m}= (\mflo H_{m-1}-\lambdat G_{m-1})(\mflo H_{m}-\lambdat G_{m})[H_{m-1}(\mflo H_{m}-\lambdat G_{m})- (\mflo H_{m-1}-\lambdat G_{m-1})H_{m}]= \lambdat(\mflo H_{m-1}-\lambdat G_{m-1})(\mflo H_{m}-\lambdat G_{m})(G_{m-1}H_{m}-H_{m-1}G_{m})$ and $G_{m-1}H_{m-1}G_{m+1}-G_{m}^{2}H_{m}=(\mflo H_{m}-\lambdat G_{m})^{2}G_{m-1}H_{m-1}- (\mflo H_{m-1}-\lambdat G_{m-1})^{2}G_{m}H_{m}=(\mflo H_{m}-\lambdat G_{m})G_{m-1}[H_{m-1}(\mflo H_{m}-\lambdat G_{m})- H_{m}(\mflo H_{m-1}-\lambdat G_{m-1})]+ H_{m} (\mflo H_{m-1}-\lambdat G_{m-1})[G_{m-1}(\mflo H_{m}-\lambdat G_{m})- (\mflo H_{m-1}-\lambdat G_{m-1})G_{m}]  =\lambdat G_{m-1}(\mflo H_{m}-\lambdat G_{m})(G_{m-1}H_{m}-H_{m-1}G_{m})+ \mflo H_{m}(\mflo H_{m-1}-\lambdat G_{m-1})(G_{m-1}H_{m}-H_{m-1}G_{m})$. Hence, we proved the identity.

The identity implies  $\val{G_{m}H_{m+1}-H_{m}G_{m+1}}\geq\min\{\val{W_{1}},\val{W_{2}},\val{W_{3}}\}+ \val{G_{m-1}H_{m}-H_{m-1}G_{m}}$. By parts (1) and (2), $\val{W_{1}}=\val{\lambda\lambdat}+\val{1-\mflt}+ 2\val{\mflo-\lambdat}+\val{H_{m-1}H_{m}}$, $\val{W_{2}}=\val{p\lambda^{2}}+2\val{\mflo-\lambdat}+\val{H_{m-1}H_{m}}$, and $\val{W_{3}}=\val{p\lambda^{2}\lambdat}+\val{\mflo-\lambdat}+\val{H_{m-1}H_{m}}$. Thus, $\val{G_{m}H_{m+1}-H_{m}G_{m+1}}\geq \min\big\{\Val{\lambda\lambdat(1-\mflt)}-2\val{\mflo-\lambdat}, \val{p\lambda^{2}}-2\val{\mflo-\lambdat},3[1-\val{\mflo-\lambdat}]\big\}+4\val{\mflo-\lambdat}+\val{H_{m-1}H_{m}}+ \min\big\{\Val{\lambda(1-\mflt)}-\val{\mflo-\lambdat}, \val{p\lambda^{2}}-2\val{\mflo-\lambdat}\big\}+(m-1)\min\big\{\Val{\lambda\lambdat(1-\mflt)}-2\val{\mflo-\lambdat}, \val{p\lambda^{2}}-2\val{\mflo-\lambdat},3[1-\val{\mflo-\lambdat}]\big\}+\val{H_{m-1}H_{m}}$ by induction hypothesis. Hence, it holds by induction.
\end{proof}

The assumption $\mathbf{H(1,3)}$ implies that the quantities in the sets of the part (3) of the lemma above are strictly positive. So the part (3) of the lemma says that $\val{G_{m}H_{m+1}-H_{m}G_{m+1}}-\val{H_{m}H_{m+1}}$ approaches $\infty$ as $m$ goes to $\infty$. That is, the sequence $G_{m}/H_{m}$ is Cauchy. We let $$\Delta_{\tiny{\ref{subsec-D1third}}}:=\lim_{m\mapsto\infty}\frac{G_{m}}{H_{m}}.$$
Note that $\Delta_{\tiny{\ref{subsec-D1third}}}$ depends on the values of the parameters $\lambda,\lambdat,\mflo,\mflt$.

The part (1) of the lemma implies that $\val{G_{m}}=\val{H_{m}}$ for all $m\geq0$ and
\begin{equation*}
\val{1-\Delta_{\tiny{\ref{subsec-D1third}}}}\geq
\min\big\{\Val{\lambda(1-\mflt)}-\val{\mflo-\lambdat},\val{p\lambda^{2}}-2\val{\mflo-\lambdat}\big\}>0,
\end{equation*}
which immediately implies $$\frac{\mflo-\lambdat\Delta_{\tiny{\ref{subsec-D1third}}}}{\mflo-\lambdat}\in1+\maxid.$$
In particular, $\val{\mflo-\lambdat\Delta_{\tiny{\ref{subsec-D1third}}}}=\val{\mflo-\lambdat}$.

It is also easy to check that $\Delta_{\tiny{\ref{subsec-D1third}}}$ satisfies the equation
\begin{equation}\label{D1-eighth}
(\mflo-\lambdat \Delta_{\tiny{\ref{subsec-D1third}}})^{2}(1-\Delta_{\tiny{\ref{subsec-D1third}}})+\lambda[(1-\mflt)(\mflo-\lambdat \Delta_{\tiny{\ref{subsec-D1third}}})+p\lambda\Delta_{\tiny{\ref{subsec-D1third}}}] \Delta_{\tiny{\ref{subsec-D1third}}}=0,
\end{equation}
by taking the limits of $G_{m+1}/H_{m+1}=(\mflo H_{m}-\lambdat G_{m})^{2}/\big[(\mflo H_{m}-\lambda G_{m})^{2}-\lambda\big((1-\mflt)(\mflo H_{m}-\lambdat G_{m})+p\lambda G_{m}\big)H_{m}\big]$. The equation plays a crucial role in the proof of the following proposition.

\begin{prop}\label{prop-D1third}
Keep the assumption $\mathbf{H(1,3)}$. Then $\mfm_{[\frac{1}{2},1]}:=S_{\roi}(E_{1},E_{2},E_{3})$ is a strongly divisible module in $\mfd_{[\frac{1}{2},1]}$, where
    $$\left\{
        \begin{array}{ll}
          E_{1}=\mathfrak{U}_{1}+\frac{p\Delta_{\tiny{\ref{subsec-D1third}}} (\gamma-1)}{\lambda(\mflo-\lambdat\Delta_{\tiny{\ref{subsec-D1third}}})}\left( \frac{\lambdat(\mflo-\lambdat\Delta_{\tiny{\ref{subsec-D1third}}})}{p} (\lambdat\baset+\baseth)-\lambda^{2}(\gamma-1)\baseth\right) & \hbox{} \\
          E_{2}=\frac{\lambdat(\mflo-\lambdat\Delta_{\tiny{\ref{subsec-D1third}}})}{p} (\lambdat\baset+\baseth)-\lambda^{2}(\gamma-1)\baseth   & \hbox{} \\
          E_{3}=(\mflo-\lambdat\Delta_{\tiny{\ref{subsec-D1third}}})\baseth. & \hbox{}
        \end{array}
      \right.$$
\end{prop}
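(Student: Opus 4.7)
The plan is to mirror the structure of Propositions \ref{prop-D0third} and \ref{prop-D1second}, in which the analogous cases are handled by exploiting the limit $\Delta := \Delta_{\tiny{\ref{subsec-D1third}}}$ and the quadratic identity (\ref{D1-eighth}) it satisfies. First I would verify that $\mfm_{[\frac{1}{2},1]}$ is stable under $\vphi$ and $N$: this is a routine computation using $\vphi(\gamma) \in p^{p-1}S$, $\vphi\bigl(\frac{u-p}{p}\bigr) \equiv \gamma - 1$ modulo $pS$, and $N(\gamma) = -p[\gamma + (u-p)^{p-1}]$. Here one uses crucially that $\val{\mflo - \lambdat\Delta} = \val{\mflo-\lambdat}$, obtained from part~(1) of the preceding sequence lemma, so that division by $\mflo - \lambdat\Delta$ does not cost more than division by $\mflo - \lambdat$.

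Next, since $\filt S \cdot \mfm \subset \filt \mfm$ and $\vphi(\filt S \cdot \mfm) \subset p^{2}\mfm$, it suffices to show $\vphi(\mathfrak{X}_{1}) \in p^{2}\mfm$ for the generic representative $\mathfrak{X}_{1}$ defined at the start of Section~\ref{sec-strong div mod1}. I would expand $\mathfrak{X}_{1}$ in the basis $(E_{1}, E_{2}, E_{3})$ modulo $\filt S \cdot \mfm$, obtaining an expression of the form $\frac{C_{0}}{p}E_{1} + (\ast)\, E_{2} + \frac{V}{W}E_{3}$ plus trailing terms in $\baseo, \baset, \baseth$. Forcing these trailing terms to vanish yields, by analogy with the earlier cases, the basic bounds $\val{C_{0}} \geq 2 - \val\lambda$ and $\val{C_{1}} \geq 1$, an auxiliary inequality controlling $(\mflo-\lambdat\Delta)C_{1} - p\lambdat\Delta C_{2}$ (or a similar linear combination) of the shape seen in~(\ref{D0-eightth}), and finally the inequality $\val{V} \geq \val{W}$, with $W$ a scalar multiple of $(\mflo-\lambdat\Delta)^{2}$.

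The crux is then to multiply $V$ by $\Delta$ and rearrange so that the coefficient of $C_{1}$ becomes
\[
\lambda\bigl[(\mflo-\lambdat\Delta)^{2}(1-\Delta) + \lambda[(1-\mflt)(\mflo-\lambdat\Delta) + p\lambda\Delta]\Delta\bigr],
\]
which vanishes by~(\ref{D1-eighth}). What survives in $\Delta\cdot V$ is a clean combination of $C_{0}$ and $C_{2}$, whose valuation is controlled by $\val{V} \geq \val{W}$. Finally I would compute $\vphi(\mathfrak{X}_{1})$ directly using $\vphi(\frac{u-p}{p}) \equiv \gamma - 1 \pmod{pS}$; after discarding contributions whose coefficients already have large enough valuation (thanks to $\val{C_{0}} \geq 2-\val\lambda$ and $\val{C_{1}} \geq 1$), the remainder collapses into a scalar multiple of the very linear combinations bounded in the previous step, hence lies in $p^{2}\mfm$.

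The main obstacle will be the careful bookkeeping: identifying precisely which linear combination of $C_{0}, C_{1}, C_{2}$ appears as the surviving factor in $\Delta\cdot V$, and matching it to the combination that appears in $\vphi(\mathfrak{X}_{1})$ modulo $p^{2}\mfm$. The hypothesis $\mathbf{H(1,3)}$, particularly $\val{\mflo-\lambdat} \leq \Val{\lambda(1-\mflt)}$ together with $\val{\mflo-\lambdat} < 1$, should be exactly tight enough to make all remaining error terms land in $p^{2}\mfm$, exactly as happened in Proposition~\ref{prop-D0third}.
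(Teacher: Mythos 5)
Your plan follows the paper's proof essentially exactly: reduce to $\vphi(\mathfrak{X}_1)\in p^2\mfm$, expand $\mathfrak{X}_1$ in the basis $(E_1,E_2,E_3)$ modulo $\filt S\cdot\mfm$ to extract the bounds on $C_0, C_1, C_2$ and on $V/W$ (the paper's $W$ is indeed $p\lambdat^2(\mflo-\lambdat\Delta)^2$), multiply by a $\Delta$-factor to produce the combination that vanishes via~(\ref{D1-eighth}), and then match the surviving combination against $\vphi(\mathfrak{X}_1)$ computed modulo $p^2\mfm$. One small point on the bookkeeping you flag as the obstacle: unlike in Propositions~\ref{prop-D0third} and~\ref{prop-D1second} (where the coefficient of $C_1$ dies directly), here the paper multiplies $V$ by $\Delta(\mflo-\lambdat\Delta)$, then first uses the auxiliary inequality ($\Val{\lambdat(\mflo-\lambdat\Delta)C_1-p\lambda C_2}\geq\Val{\lambda\lambdat^2(\mflo-\lambdat\Delta)}$) to trade the $C_1$-terms in the error $X$ for $C_2$-terms, after which it is the resulting coefficient of $C_2$ that is killed by~(\ref{D1-eighth}); so the correct analogue among the earlier cases is Proposition~\ref{prop-D0second} rather than~\ref{prop-D0third}.
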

Note that $S_{\roi}(\Baseo,\Baset,\Baseth)=S_{\roi}\big(\Baseo,\frac{\lambdat(\mflo-\lambdat)}{p} (\lambdat\baset+\baseth),(\mflo-\lambdat)\baseth\big)$ in this case since $\val{\lambda^{2}}\geq1>\val{\mflo-\lambdat\Delta_{\tiny{\ref{subsec-D1third}}}}=\val{\mflo-\lambdat}$.

\begin{proof}
During the proof, we let $\Delta:=\Delta_{\tiny{\ref{subsec-D1third}}}$ and $\mfm:=\mfm_{[\frac{1}{2},1]}$ for brevity.
It is routine to check that $\vphi(\Baseo)\equiv\Baseth$ and $\vphi(\Baset)\equiv\vphi(\Baseth)\equiv N(\Baseo)\equiv N(\Baset)\equiv N(\Baseth)\equiv 0$ modulo $\maxi{E}\mfm$. Hence, $\mfm$ is stable under $\vphi$ and $N$.

We check that $\vphi(\filt \mfm)\subset p^{2}\mfm$. It is easy to check that $\filt S\cdot\mfm\subset\filt\mfm$, and so it is enough to check $\vphi(\mathfrak{X}_{1})\in p^{2}\mfm$, since $\vphi(\filt S\cdot\mfm)\subset p^{2}\mfm$. We first compute  $\filt \mfm$ modulo $\filt S\cdot\mfm$.
$\mathfrak{X}_{1}\equiv C_{0}\big(\frac{1}{p}\Baseo-\frac{1}{\lambda\lambdat}\Baset+ \frac{p(\mflo-\lambdat\Delta)+p\lambdat\Delta}{\lambdat^{2}(\mflo-\lambdat\Delta)}\Baset+ \frac{\lambda(\mflo-\lambdat\Delta)+\lambda\lambdat\Delta}{\lambdat(\mflo-\lambdat\Delta)^{2}}\Baseth+ \frac{(1-\mflt)}{p(\mflo-\lambdat\Delta)}\Baseth- \frac{p\lambda^{2}(\mflo-\lambdat\Delta)+p\lambda^{2}\lambdat\Delta}{\lambdat^{2}(\mflo-\lambdat\Delta)^{2}}\Baseth- \frac{(\mflo-\lambdat)+\lambdat(1-\mflt)}{\lambdat(\mflo-\lambdat\Delta)}\Baseth\big)+ (u-p)\big(\frac{C_{1}}{p}\Baseo+ \frac{p\lambda(\mflo C_{1}+C_{2})-\lambdat(\mflo-\lambdat\Delta)C_{1}}{\lambda\lambdat^{2}(\mflo-\lambdat\Delta)}\Baset+\frac{V}{W}\Baseth\big)$ modulo $\filt S\cdot\mfm$, where $W=p\lambdat^{2}(\mflo-\lambdat\Delta)^{2}$ and
\begin{multline*}
V=\lambdat^{2}(\mflo-\lambdat\Delta)(C_{0}+p\mflt C_{1})+p\lambda\lambdat\mflo C_{1}+ \lambdat^{2}(1-\mflt)(\mflo-\lambdat\Delta)C_{1}-\\
 p^{2}\lambda^{2}(\mflo C_{1}+C_{2})-p\lambdat(\mflo-\lambdat\Delta)(\mflo C_{1}+C_{2}).
\end{multline*}
Hence, if $\mathfrak{X}_{1}\in\filt \mfm$, then we get $\val{C_{0}}\geq\val{\lambda\lambdat}=2-\val\lambda$, $\val{C_{1}}\geq1$,
\begin{equation}\label{D1-nineth}
\Val{p\lambda(\mflo C_{1}+C_{2})-\lambdat(\mflo-\lambdat\Delta)C_{1}}\geq\Val{\lambda\lambdat^{2}(\mflo-\lambdat\Delta)},
\end{equation}
and
\begin{equation}\label{D1-tenth}
\val{V}\geq \val{W}.
\end{equation}
The inequality (\ref{D1-nineth}) is equivalent to
\begin{equation} \label{D1-eleventh}
\Val{\lambdat(\mflo-\lambdat\Delta)C_{1}-p\lambda C_{2}}\geq\Val{\lambda\lambdat^{2}(\mflo-\lambdat\Delta)},
\end{equation}
which implies that $\val{C_{2}}\geq\Val{\frac{\lambdat}{\lambda}(\mflo-\lambdat\Delta)}$ since $\val{C_{1}}\geq1$. Using the inequalities $\val{C_{1}}\geq1$ and $\val{C_{2}}\geq\Val{\frac{\lambdat}{\lambda}(\mflo-\lambdat\Delta)}$, one can readily check that
\begin{multline*}
V\equiv\lambdat^{2}(\mflo-\lambdat\Delta)C_{0}+p\lambda\lambdat^{2}\Delta C_{1}+\lambdat^{2}(1-\mflt)(\mflo-\lambdat\Delta)C_{1}-\\ p\lambdat(\mflo-\lambdat\Delta)C_{2}+p\lambda[\lambdat(\mflo-\lambdat\Delta)C_{1}-p\lambda C_{2}]
\end{multline*}
modulo $(W)$. By the inequality (\ref{D1-eleventh}), $$V\equiv\lambdat^{2}(\mflo-\lambdat\Delta)C_{0}+p\lambda\lambdat^{2}\Delta C_{1}+\lambdat^{2}(1-\mflt)(\mflo-\lambdat\Delta)C_{1}- p\lambdat(\mflo-\lambdat\Delta)C_{2}$$ modulo $(W)$. Then
\begin{equation*}
\Delta(\mflo-\lambdat\Delta)\cdot V\equiv\lambdat(\mflo-\lambdat\Delta)^{2}(\lambdat\Delta C_{0}-p C_{2})+X
\end{equation*} modulo $((\mflo-\lambdat\Delta) W)$,
where
$$X=p\lambdat(\mflo-\lambdat\Delta)^{2}(1-\Delta)C_{2}+ p\lambda\lambdat^{2}(\mflo-\lambdat\Delta)\Delta^{2}C_{1}+ \lambdat^{2}(1-\mflt)(\mflo-\lambdat\Delta)^{2}\Delta C_{1}.$$ By the inequality (\ref{D1-eleventh}), $$X\equiv p\lambdat\big((\mflo-\lambdat\Delta)^{2}(1-\Delta)+ \lambda[(1-\mflt)(\mflo-\lambdat\Delta)+p\lambda\Delta]\Delta\big) C_{2}$$ modulo $((\mflo-\lambdat\Delta)W)$. But this is $0$ by the equation (\ref{D1-eighth}).
Hence, from the inequality (\ref{D1-tenth}), we have
\begin{equation}\label{D1-twelfth}
\val{\lambdat \Delta C_{0}-p C_{2}} \geq\val{p\lambdat(\mflo-\lambdat\Delta)}.
\end{equation}

Finally, we check $\vphi(\mathfrak{X}_{1})\in p^{2}\mfm$. Using the fact $\vphi(\frac{u-p}{p})\equiv\gamma-1$ modulo $pS$, $\vphi(\mathfrak{X}_{1})\equiv\lambda C_{0}\Big(\Baseo-\frac{p\Delta(\gamma-1)}{\lambda(\mflo-\lambdat\Delta)}\Baset\Big)+ p\lambda C_{1}(\gamma-1)\Big(\Baseo-\frac{\gamma-1}{\mflo-\lambdat\Delta}\Baseth -\frac{p\Delta(\gamma-1)}{\lambda(\mflo-\lambdat\Delta)}\Baset\Big)+ pC_{2}(\gamma-1)\Big(\frac{p}{\lambdat(\mflo-\lambdat\Delta)}\Baset+ \frac{p\lambda^{2}(\gamma-1)}{\lambdat(\mflo-\lambdat\Delta)^{2}}\Baseth\Big)$ modulo $p^{2}\mfm$. Since $\val{C_{0}}\geq2-\val\lambda$ and $\val{C_{1}}\geq1$, $\vphi(\mathfrak{X}_{1})\equiv -p\frac{\lambdat\Delta C_{0}-p C_{2}}{\lambdat (\mflo-\lambdat\Delta)}(\gamma-1)\Baset- p\lambda\frac{\lambdat(\mflo-\lambdat\Delta)C_{1}-p\lambda C_{2}}{\lambdat(\mflo-\lambdat\Delta)^{2}} (\gamma-1)^{2}\Baseth$ modulo $p^{2}\mfm$. Then $\vphi(\mathfrak{X}_{1})\equiv0$ modulo $p^{2}\mfm$ by the inequalities (\ref{D1-eleventh}) and (\ref{D1-twelfth}). Thus $\vphi(\filt \mfm)\subset p^{2}\mfm$.
\end{proof}

\section{Reduction modulo $p$}\label{sec-Breuil mod}
In this section, we study mod $p$ reductions of the irreducible semi-stable and non-crystalline representations of $G_{\QP}$ with Hodge--Tate weights $(0,1,2)$, by computing the Breuil modules corresponding to the mod $p$ reductions of the strongly divisible modules constructed in Sections~\ref{sec-strong div mod0} and~\ref{sec-strong div mod1}. We determine which of the representations has an absolutely irreducible mod $p$ reduction. We write $\mcf$ for $\FFF[u]/u^{p}$ in this section.
\subsection{Breuil modules of $D_{[0,\frac{1}{2}]}$}
The Breuil modules $\mcm_{[0,\frac{1}{2}]}:=\mcf(\Baseo,\Baset,\Baseth)$ corresponding to the mod $p$ reductions of the strongly divisible modules $\mfm_{[0,\frac{1}{2}]}$ (constructed in Section~\ref{sec-strong div mod0}) are computed as follows:

\noindent\textbf{(a):} the mod $p$ reductions of the modules in Proposition \ref{prop-D0first} correspond to
     \begin{itemize}
     \item $\mcm_{2}:=\mcf\big(u\Baseo,u\Baset,u\Baseth\big)$;
     \item $\vphi_{2}:\mcm_{2}\rightarrow \mcm$ is induced by
     $\left\{
       \begin{array}{ll}
         u\Baseo\mapsto\overline{\frac{\lambda(\mflo-1)}{p}}\Baseo-\Baset & \hbox{} \\
         u\Baset\mapsto \overline{\frac{\lambda(\mflt+p\lambda)}{p^{2}}}\Baseo-\overline{\frac{\lambdat}{p}}\Baset+ \overline{\frac{\lambda^{2}\lambdat}{p^{2}}}\Baseth & \hbox{} \\
         u\Baseth\mapsto \Baseo; & \hbox{}
       \end{array}
     \right.$
     \item $N:\mcm\rightarrow\mcm$ is induced by $N(\Baseo)=N(\Baset)=N(\Baseth)=0$.
     \end{itemize}
\noindent\textbf{(b):} the mod $p$ reductions of the modules in Proposition \ref{prop-D0second} correspond to
     \begin{itemize}
     \item $\mcm_{2}:=\mcf\big(u^{2}\Baseo, \Baset-\overline{\frac{(\mflt+p\lambda)-\lambdat(\mflo-1)}{p(\mflo-1)}}\Baseth,u\Baseth\big)$;
     \item $\vphi_{2}:\mcm_{2}\rightarrow \mcm$ is induced by
     $\left\{
       \begin{array}{ll}
         u^{2}\Baseo\mapsto\Baseth & \hbox{} \\
         \Baset-\overline{\frac{(\mflt+p\lambda)-\lambdat(\mflo-1)}{p(\mflo-1)}}\Baseth \mapsto -\overline{\frac{\lambda^{2}\lambdat}{p^{2}}}\Baseo & \hbox{} \\
         u\Baseth\mapsto \Baset; & \hbox{}
       \end{array}
     \right.$
     \item $N:\mcm\rightarrow\mcm$ is induced by $N(\Baseo)=N(\Baset)=N(\Baseth)=0$.
     \end{itemize}
\noindent\textbf{(c):} the mod $p$ reductions of the modules in Proposition \ref{prop-D0third} correspond to
         \begin{itemize}
     \item $\mcm_{2}:=\mcf\big(u\Baseo-\overline{\frac{p(\mflo-1)}{\mflt+p\lambda}}u\Baset, u^{2}\Baset, \Baseth\big)$;
     \item $\vphi_{2}:\mcm_{2}\rightarrow \mcm$ is induced by
     $\left\{
       \begin{array}{ll}
         u\Baseo-\overline{\frac{p(\mflo-1)}{\mflt+p\lambda}}u\Baset\mapsto-\Baset & \hbox{} \\
         u^{2}\Baset \mapsto \Baseth & \hbox{} \\
         \Baseth\mapsto \overline{\frac{\lambda^{2}\lambdat}{p^{2}}}\Baseo; & \hbox{}
       \end{array}
     \right.$
     \item $N:\mcm\rightarrow\mcm$ is induced by $N(\Baseo)=N(\Baset)=N(\Baseth)=0$.
     \end{itemize}

We check our computation of the Breuil Modules $\mcm_{[0,\frac{1}{2}]}$ below. We first prove that the Breuil modules in \textbf{(a)} correspond to the mod $p$ reductions of the strongly divisible modules in Proposition \ref{prop-D0first}. We keep the notation as in the proof of Proposition \ref{prop-D0first}. We may let $C_{1}-p\lambda C_{2}=\lambdat\alpha$ and $V=W\beta$ for some $\alpha,\beta\in\roi^{\times}$ by the inequalities (\ref{D0-first}) and (\ref{D0-second}). Recall that we have $\val{C_{0}}\geq2-\val\lambda=\val{\lambda\lambdat}$, $\val{C_{1}}\geq1$, and $\val{C_{2}}\geq-\val\lambda$. Using these inequalities, it is easy to check that $V\sequiv\lambda\lambdat C_{0}+\lambda(\mflt+p\lambda)(C_{1}-p\lambda C_{2})-\lambda\lambdat(\mflo-1)C_{1}=\lambda\lambdat C_{0}+\lambda\lambdat(\mflt+p\lambda)\alpha-\lambda\lambdat(\mflo-1)C_{1}$ modulo $(W)$. By the definition of the functor (\ref{BrMod-fuctor}), $u\Big(\overline{\frac{C_{1}}{p}}\Baseo-\overline{\alpha}\Baset+\overline{\beta}\Baseth\Big)\overset{\vphi_{2}}\mapsto \overline{\frac{p^{2}\lambdat\beta -\lambda\lambdat(\mflt+p\lambda)\alpha+\lambda\lambdat(\mflo-1)C_{1}}{p^{2}\lambdat}}\Baseo- \overline{\frac{C_{1}-\lambdat\alpha}{p}}\Baset-\overline{\frac{\lambda^{2}\lambdat\alpha}{p^{2}}}\Baseth$. Thus, $u\Baseo\overset{\vphi_{2}}\mapsto\overline{\frac{\lambda(\mflo-1)}{p}}\Baseo-\Baset$, $u\Baset\overset{\vphi_{2}}\mapsto \overline{\frac{\lambda(\mflt+p\lambda)}{p^{2}}}\Baseo-\overline{\frac{\lambdat}{p}}\Baset+ \overline{\frac{\lambda^{2}\lambdat}{p^{2}}}\Baseth$, and $u\Baseth\overset{\vphi_{2}}\mapsto\Baseo$. $N$ is immediate from the proof of Proposition \ref{prop-D0first}.

We also check that the Breuil modules in \textbf{(b)} correspond to the mod $p$ reductions of the strongly divisible modules in Proposition \ref{prop-D0second}. We keep the notation as in the proof of Proposition \ref{prop-D0second}. The inequality (\ref{D0-sixth}) with $\val{C_{0}}\geq2-\val\lambda$ implies that $\val{C_{2}}\geq\min\{0,1-\val{\lambda^{2}(\mflo-1)}\}>-\val\lambda$, and so we have $\val{C_{1}}\geq\min\{2-\val{\lambda(\mflo-1)},1+\val\lambda\}>1$ by the inequality (\ref{D0-fourth}). We may let $[\lambda(\mflo-1)-\lambdat\Delta]C_{1}-p\lambda^{2}(\mflo-1)C_{2}=\lambda\lambdat[\lambda(\mflo-1)-\lambdat\Delta]\alpha$ and $V=W\beta$ for some $\alpha,\beta\in\roi^{\times}$ by the inequalities (\ref{D0-fourth}) and (\ref{D0-fifth}). Using the inequalities $\val{C_{1}}>1$ and $\val{C_{2}}>-\val\lambda$, one can readily check that $\Delta[\lambda(\mflo-1)-\lambdat\Delta]V\sequiv\lambdat[\lambda(\mflo-1)-\lambdat\Delta]^{2}[\Delta C_{0}-p\lambda(\mflo-1)C_{2}]+\lambda\lambdat(\mflt+p\lambda)[\lambda(\mflo-1)-\lambdat\Delta]^{2}\Delta\alpha- \lambda\lambdat^{2}(\mflo-1)[\lambda(\mflo-1)-\lambdat\Delta]^{2}\Delta\alpha$ modulo $(p\lambdat[\lambda(\mflo-1)-\lambdat\Delta]^{3})$, by tracking the proof of the inequality (\ref{D0-sixth}). Hence, we have $\Big(-\overline{\frac{C_{0}}{\lambda\lambdat}}\Baset+ \overline{\frac{[(\mflt+p\lambda)-\lambdat(\mflo-1)]C_{0}}{p\lambda\lambdat(\mflo-1)}}\Baseth\Big)+ u\big(-\overline{\alpha}\Baset+\overline{\beta}\Baseth)\overset{\vphi_{2}}\mapsto\overline{\frac{\lambda C_{0}}{p^{2}}}\Baseo+\overline{\frac{p\lambda(\mflo-1)\beta-\lambda[(\mflt+p\lambda)-\lambdat(\mflo-1)] \Delta\alpha}{p\lambda(\mflo-1)}}\Baset- \overline{\frac{\lambda^{2}\lambdat\alpha}{p\lambda(\mflo-1)}}\Baseth$. (Recall that $\Delta\in 1+\maxid$ and $\frac{\lambda(\mflo-1)-\lambdat\Delta}{\lambda(\mflo-1)}\in1+\maxid$.) Thus, $\Baset-\overline{\frac{(\mflt+p\lambda)-\lambdat(\mflo-1)}{p(\mflo-1)}}\Baseth\overset{\vphi_{2}}\mapsto -\overline{\frac{\lambda^{2}\lambdat}{p^{2}}}\Baseo$, $u\Baset\overset{\vphi_{2}}\mapsto\overline{\frac{(\mflt+p\lambda)-\lambdat(\mflo-1)}{p(\mflo-1)}}\Baset$, and $u\Baseth\overset{\vphi_{2}}\mapsto\Baset$. Since $\vphi(\Baseo)\equiv\Baseth$ modulo $\maxid\mfm$, we also get $\vphi_{2}(u^{2}\Baseo)=\Baseth$. $N$ is immediate from the proof of Proposition \ref{prop-D0second}.

We finally show that the Breuil modules in \textbf{(c)} correspond to the mod $p$ reductions of the strongly divisible modules in Proposition \ref{prop-D0third}. We keep the notation as in the proof of Proposition \ref{prop-D0third}. We may let $V=W\alpha$ for some $\alpha\in\roi^{\times}$ by the inequality (\ref{D0-nineth}). Recall that $\val{C_{0}}\geq2-\val\lambda=\val{\lambda\lambdat}$ and $\val{C_{1}}\geq1$. By tracking the proof of the inequality (\ref{D0-eleventh}), one can readily check that $\Delta V\sequiv (\mflt+p\lambda\Delta)[\lambdat\Delta C_{0}+(\mflt+p\lambda)(C_{1}-p\lambda\Delta C_{2})]+p\lambdat(\mflt+p\lambda\Delta)^{2}C_{2}$ modulo $(W)$, which rearranges to $\lambdat(\mflt+p\lambda\Delta)\Delta C_{0}+(\mflt+p\lambda\Delta)^{2}(C_{1}-p\lambda C_{2})+p\lambda(\mflt+p\lambda\Delta)^{2}(1-\Delta)C_{2}+p\lambdat(\mflt+p\lambda\Delta)^{2}C_{2}= \lambdat(\mflt+p\lambda\Delta)\Delta C_{0}+(\mflt+p\lambda\Delta)^{2}(C_{1}-p\lambda C_{2})- p\lambda\lambdat[(\mflo-1)(\mflt+p\lambda\Delta)+p\lambdat\Delta]\Delta C_{2}+ p\lambdat(\mflt+p\lambda\Delta)^{2}C_{2}$. The last equality is due to the equation (\ref{D0-seventh}). This congruence implies that $\Delta V\sequiv(\mflt+p\lambda\Delta)^{2}(C_{1}-p\lambda C_{2})-p\lambda\lambdat(\mflo-1)(\mflt+p\lambda\Delta)C_{2}$ modulo $(\lambdat(\mflt+p\lambdat\Delta)^{2})$, which implies that $\val{C_{1}-p\lambda C_{2}}\geq\Val{p\lambdat(\mflo-1)}-\val{\mflt+p\lambda\Delta}>\val{\lambdat}$. Hence, we have $\overline{\frac{C_{0}}{\lambda\lambdat}}\Baseth+ u\Big(\overline{\lambda C_{2}}\Baseo-\overline{\frac{p\lambda(\mflo-1)C_{2}}{\mflt+p\lambda\Delta}}\Baset+ \overline{\alpha}\Baseth\Big)\overset{\vphi_{2}}\mapsto\overline{\frac{\lambda C_{0}}{p^{2}}}\Baseo-\overline{\lambda C_{2}}\Baset$. (Recall that $\Delta\in 1+\maxi{E}$ and $\frac{\mflt+p\lambda\Delta}{\mflt+p\lambda}\in 1+\maxi{E}$.) Thus, $\Baseth\overset{\vphi_{2}}\mapsto\overline{\frac{\lambda^{2}\lambdat}{p^{2}}}\Baseo$ and $u\Baseo-\overline{\frac{p(\mflo-1)}{\mflt+p\lambda}}u\Baset\overset{\vphi_{2}}\mapsto-\Baset$. Since $\vphi(\Baset)\equiv\Baseth$ modulo $\maxi{E}\mfm$, we also get $u^{2}\Baset\overset{\vphi_{2}}\mapsto\Baseth$. $N$ is immediate from the proof of Proposition \ref{prop-D0third}.

\begin{prop}\label{prop-Breuilfirst}
Assume that $0<\val{\lambda}\leq\frac{1}{2}$ and $2\val\lambda+\val\lambdat=2$. Then the reductions modulo $p$ of the semi-stable representations $\mathrm{V^{*}_{st}}(D_{[0,\frac{1}{2}]})$ of $G_{\QP}$ are absolutely irreducible if and only if either one of the following holds:
\begin{enumerate}
\item $\val{\mflt+p\lambda}<\Val{p(\mflo-1)}$ and $\val{\mflt+p\lambda}<\val{\lambda\lambdat}$;
\item $\Val{(\mflt+p\lambda)-\lambdat(\mflo-1)}>\Val{p(\mflo-1)}$ and $\val{\mflo-1}<1-\val{\lambda}$.
\end{enumerate}
\end{prop}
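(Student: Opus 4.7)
The plan is to match each of the three Breuil modules (a), (b), (c) computed at the beginning of this subsection---corresponding to the subregions $\mathbf{H(0,1)}$, $\mathbf{H(0,2)}$, $\mathbf{H(0,3)}$ of the parameter space of $D_{[0,\frac{1}{2}]}$---against the templates of Section~\ref{Toy Breuil modules}, and then read off (ir)reducibility from Propositions~\ref{simple Breuil module prop}, \ref{non-simple Breuil module prop1}, and \ref{non-simple Breuil module prop2}. Case (a) fits directly into the non-simple template $\mcm(a_{i,j})$: under $\mathbf{H(0,1)}$ all four coefficients of $\vphi_{2}$ are integral, and the resulting $3\times 3$ matrix has nonzero determinant $-\overline{\lambda^{2}\lambdat/p^{2}}\in\FFF^{\times}$, so Proposition~\ref{non-simple Breuil module prop1} yields reducibility. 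Since $\mathbf{H(0,1)}$ is disjoint from both (1) and (2), this region contributes only to the reducible side of the desired equivalence.

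For case (b), put $d':=-\overline{((\mflt+p\lambda)-\lambdat(\mflo-1))/(p(\mflo-1))}$. When $d'\in\FFF^{\times}$, the module matches the template $\mcm(1,\,-\overline{\lambda^{2}\lambdat/p^{2}},\,1,\,d')$, and Proposition~\ref{non-simple Breuil module prop2} exhibits $\mcf\Baset$ as a rank-one sub-Breuil module, forcing reducibility. When $d'=0$, the basis relabeling $\Baset\leftrightarrow\Baseth$ identifies the module with $\mcm(s,1,1,-\overline{\lambda^{2}\lambdat/p^{2}})$, which is absolutely irreducible by Proposition~\ref{simple Breuil module prop} (and yields the first character sum of the main theorem). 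The condition $d'=0$ is $\val{(\mflt+p\lambda)-\lambdat(\mflo-1)}>\val{p(\mflo-1)}$, which together with the $\mathbf{H(0,2)}$ hypothesis $\val{\mflo-1}<1-\val\lambda$ is exactly condition~(2); conversely, (2) implies $\val{\mflt+p\lambda}\geq\val{p(\mflo-1)}$ by the ultrametric inequality combined with $\val{\lambdat(\mflo-1)}\geq\val{p(\mflo-1)}$ (valid because $\val\lambda\leq 1/2$), so (2) does cut out the irreducible locus inside $\mathbf{H(0,2)}$. Case (c) is strictly parallel: setting $w:=-\overline{p(\mflo-1)/(\mflt+p\lambda)}$, the module matches $\mcm'(-1,\,1,\,\overline{\lambda^{2}\lambdat/p^{2}},\,w)$ when $w\in\FFF^{\times}$ (reducible, again by Proposition~\ref{non-simple Breuil module prop2}), and collapses under the swap $\Baseo\leftrightarrow\Baset$ to $\mcm(s^{2},1,-1,\overline{\lambda^{2}\lambdat/p^{2}})$ when $w=0$ (absolutely irreducible, giving the second character sum of the main theorem). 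The vanishing $w=0$ reads $\val{\mflt+p\lambda}<\val{p(\mflo-1)}$, and joined with the $\mathbf{H(0,3)}$ condition $\val{\mflt+p\lambda}<\val{\lambda\lambdat}$ this is condition~(1); the remaining $\mathbf{H(0,3)}$ condition $\val{\mflt+p\lambda}<\val{\lambdat(\mflo-1)}$ is automatic for $\val\lambda\leq 1/2$.

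The main obstacle is the boundary bookkeeping. One must verify that $\mathbf{H(0,1)}$, $\mathbf{H(0,2)}$, $\mathbf{H(0,3)}$ jointly cover the parameter space; that on the overlap $\mathbf{H(0,2)}\cap\mathbf{H(0,3)}$ (which, as observed after the definitions, forces $\val\lambda<1/2$ and hence $\overline{\lambdat/p}=0$) the two constructions consistently report reducibility---concretely, $\val{\mflt+p\lambda}=\val{p(\mflo-1)}$ makes both $w$ and $d'$ nonzero; and that conditions (1) and (2) are pairwise disjoint and each disjoint from $\mathbf{H(0,1)}$, so that the disjunction ``(1)~or~(2)'' describes precisely the irreducible locus. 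Each of these checks reduces to a short ultrametric calculation using $\val\lambdat=2-2\val\lambda$, and requires no input beyond the data of Sections~\ref{sec-adm filt mod} and~\ref{sec-strong div mod0}.
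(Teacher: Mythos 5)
Your argument is correct and follows essentially the same route as the paper's proof: it identifies the Breuil modules (a), (b), (c) with the templates of Section~\ref{Toy Breuil modules} and then invokes Propositions~\ref{non-simple Breuil module prop1}, \ref{simple Breuil module prop}, and \ref{non-simple Breuil module prop2} to decide (ir)reducibility on each of the three subregions $\mathbf{H(0,1)}$, $\mathbf{H(0,2)}$, $\mathbf{H(0,3)}$. The paper's own proof is much more terse and omits the covering/overlap/disjointness bookkeeping that you spell out in your final paragraph, but those ultrametric checks are correct, and your only imprecision (describing the image $\mcf\Baset$ of the morphism in Proposition~\ref{non-simple Breuil module prop2} as a ``rank-one sub-Breuil module'') is a harmless gloss on how that proposition forces reducibility.
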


\begin{proof}
By Proposition \ref{non-simple Breuil module prop1}, the Breuil modules in \textbf{(a)} are non-simple. If $\Val{(\mflt+p\lambda)-\lambdat(\mflo-1)}>\Val{p(\mflo-1)}$, then $\overline{\frac{(\mflt+p\lambda)-\lambdat(\mflo-1)}{p(\mflo-1)}}=0$ in $\FFF$. Hence, the Breuil modules in \textbf{(b)} are simple if and only if $\Val{(\mflt+p\lambda)-\lambdat(\mflo-1)}>\Val{p(\mflo-1)}$ and $\val{\mflo-1}<1-\val\lambda$, by Propositions \ref{simple Breuil module prop} and \ref{non-simple Breuil module prop2}. Similarly, the Breuil modules in \textbf{(c)} are simple if and only if $\val{\mflt+p\lambda}<\Val{p(\mflo-1)}$ and $\val{\mflt+p\lambda}<\val{\lambda\lambdat}$.
\end{proof}

By Proposition~\ref{Prop-ad-filt-module[0,1/2]}, the reductions modulo $p$ of the representations corresponding to $D_{[0,\frac{1}{2}]}$ when $\val{\lambda}=\frac{1}{2}$ and $\mflt=0$ are reducible. This is consistent with the results in Proposition~\ref{prop-Breuilfirst}.

\subsection{Breuil modules of $D_{[\frac{1}{2},1]}$}
The Breuil modules $\mcm_{[\frac{1}{2},1]}:=\mcf(\Baseo,\Baset,\Baseth)$ corresponding to the mod $p$ reductions of the strongly divisible modules $\mfm_{[\frac{1}{2},1]}$ (constructed in Section~\ref{sec-strong div mod1}) are computed as follows:

\noindent\textbf{(a):} the mod $p$ reductions of the modules in Proposition \ref{prop-D1first} correspond to
     \begin{itemize}
     \item $\mcm_{2}:=\mcf\big(u\Baseo,u\Baset,u\Baseth\big)$;
     \item $\vphi_{2}:\mcm_{2}\rightarrow \mcm$ is induced by
     $\left\{
       \begin{array}{ll}
         u\Baseo \mapsto \overline{\frac{(\mflo-\lambdat)-\lambda(1-\mflt)}{p}}\Baseo-\overline{\frac{\lambda\lambdat(1-\mflt)}{p^{2}}}\Baset- \Baseth & \hbox{} \\
         u\Baset\mapsto \Baseo & \hbox{} \\
         u\Baseth\mapsto \overline{\frac{\lambda^{2}}{p}}\Baseo+\overline{\frac{\lambda^{2}\lambdat}{p^{2}}}\Baset; & \hbox{}
       \end{array}
     \right.$
     \item $N:\mcm\rightarrow\mcm$ is induced by $N(\Baseo)=N(\Baset)=N(\Baseth)=0$.
     \end{itemize}
\noindent\textbf{(b):} the mod $p$ reductions of the modules in Proposition \ref{prop-D1second} correspond to
     \begin{itemize}
     \item $\mcm_{2}:=\mcf\big(u\Baseo-\overline{\frac{p[(\mflo-\lambdat)-\lambda(1-\mflt)]}{\lambda\lambdat(1-\mflt)}}u\Baset, u^{2}\Baset, \Baseth\big)$;
     \item $\vphi_{2}:\mcm_{2}\rightarrow \mcm$ is induced by
     $\left\{
       \begin{array}{ll}
         u\Baseo-\overline{\frac{p[(\mflo-\lambdat)-\lambda(1-\mflt)]}{\lambda\lambdat(1-\mflt)}}u\Baset \mapsto-\Baset & \hbox{} \\
         u^{2}\Baset\mapsto \Baseth & \hbox{} \\
         \Baseth\mapsto \overline{\frac{\lambda^{2}\lambdat}{p^{2}}}\Baseo; & \hbox{}
       \end{array}
     \right.$
     \item $N:\mcm\rightarrow\mcm$ is induced by $N(\Baseo)=N(\Baset)=N(\Baseth)=0$.
     \end{itemize}
\noindent\textbf{(c):} the mod $p$ reductions of the modules in Proposition \ref{prop-D1third} correspond to
     \begin{itemize}
     \item $\mcm_{2}:=\mcf\big(u^{2}\Baseo, \Baset-\overline{\frac{\lambda\lambdat(1-\mflt)}{p(\mflo-\lambdat)}}\Baseth,u\Baseth\big)$;
     \item $\vphi_{2}:\mcm_{2}\rightarrow \mcm$ is induced by
     $\left\{
       \begin{array}{ll}
         u^{2}\Baseo\mapsto \Baseth & \hbox{} \\
         \Baset-\overline{\frac{\lambda\lambdat(1-\mflt)}{p(\mflo-\lambdat)}}\Baseth\mapsto -\overline{\frac{\lambda^{2}\lambdat}{p^{2}}}\Baseo & \hbox{} \\
         u\Baseth\mapsto \Baset; & \hbox{}
       \end{array}
     \right.$
     \item $N:\mcm\rightarrow\mcm$ is induced by $N(\Baseo)=N(\Baset)=N(\Baseth)=0$.
     \end{itemize}

We check our computation of the Breuil modules $\mcm_{[\frac{1}{2},1]}$ below. We first prove that the Breuil modules in \textbf{(a)} correspond to the mod $p$ reductions of the strongly divisible modules in Proposition \ref{prop-D1first}. We keep the notation as in the proof of the Proposition \ref{prop-D1first}. We may let $\lambdat(\mflo-\lambdat)C_{1}-p\lambda\mflo C_{1}-p\lambda C_{2}=p^{2}\lambdat\alpha$ and $V=W\beta$ for some $\alpha,\beta\in \roi^{\times}$ by the inequalities (\ref{D1-first}) and (\ref{D1-second}). Since $\val{C_{0}}\geq2-\val\lambda$ and $\val{C_{1}}\geq1$, we have $\lambdat(\mflo-\lambdat)C_{1}-p\lambda C_{2}\sequiv p^{2}\lambdat\alpha$ modulo $(p^{2}\lambdat)$ and $\lambdat C_{0}-pC_{2}+\lambdat(1-\mflt)C_{1}\sequiv p\lambda\lambdat\beta$ modulo $(p\lambda\lambdat)$, both of which imply that $\lambda C_{0}+\lambda(1-\mflt)C_{1}-(\mflo-\lambdat)C_{1}\sequiv p\lambda^{2}\beta-p^{2}\alpha$ modulo $(p^{2})$. By the definition of the functor (\ref{BrMod-fuctor}), we have $u\Big(\overline{\frac{C_{1}}{p}}\Baseo- \overline{\alpha}\Baset+\overline{\beta}\Baseth\Big)\overset{\vphi_{2}}\mapsto \overline{\frac{p\lambda^{2}\beta-p^{2}\alpha-\lambda(1-\mflt)C_{1}+(\mflo-\lambdat)C_{1}}{p^{2}}}\Baseo+ \overline{\frac{p\lambda^{2}\lambdat\beta-\lambda\lambdat(1-\mflt)C_{1}}{p^{3}}}\Baset-\overline{\frac{C_{1}}{p}}\Baseth$. Hence, $u\Baseo\overset{\vphi_{2}}\mapsto\overline{\frac{(\mflo-\lambdat)-\lambda(1-\mflt)}{p}}\Baseo- \overline{\frac{\lambda\lambdat(1-\mflt)}{p^{2}}}\Baset-\Baseth$, $u\Baset\overset{\vphi_{2}}\mapsto\Baseo$, and $u\Baseth\overset{\vphi_{2}}\mapsto\overline{\frac{\lambda^{2}}{p}}\Baseo+ \overline{\frac{\lambda^{2}\lambdat}{p^{2}}}\Baset$. $N$ is immediate from the proof of Proposition \ref{prop-D1first}.

We also check that the Breuil modules in \textbf{(b)} correspond to the mod $p$ reductions of the strongly divisible modules in Proposition \ref{prop-D1second}. We keep the notation as in the proof of Proposition \ref{prop-D1second}.
We may let $V=W\alpha$ for some $\alpha\in\roi^{\times}$ by the inequality~(\ref{D1-fifth}). Recall that $\val{C_{0}}\geq2-\val\lambda$ and $\val{C_{1}}\geq1$. By tracking the proof of the inequality (\ref{D1-sixth}), we have $\Delta V\sequiv[\lambdat(1-\mflt)+p\lambda\Delta]\big(\lambdat^{2}(1-\mflt)\Delta C_{0}+\lambdat(1-\mflt)[\lambdat(1-\mflt)+p\lambda\Delta]C_{1}-p[\lambdat(1-\mflt)+p\lambda\Delta]\Delta C_{2}\big)-p(\mflo-\lambdat)[\lambdat(1-\mflt)+p\lambda\Delta]^{2}\Delta C_{1}$ modulo $(W)$, which implies that
$\lambdat^{2}(1-\mflt)\Delta C_{0}+\lambdat(1-\mflt)[\lambdat(1-\mflt)+p\lambda\Delta]C_{1}-p[\lambdat(1-\mflt)+p\lambda\Delta]\Delta C_{2}\big)-p(\mflo-\lambdat)[\lambdat(1-\mflt)+p\lambda\Delta]\Delta C_{1}\sequiv\lambda\lambdat^{2}(1-\mflt)[\lambdat(1-\mflt)+p\lambda\Delta]\Delta\alpha$ modulo $(\lambda\lambdat^{2}(1-\mflt)[\lambdat(1-\mflt)+p\lambda\Delta])$. Hence, we have $\overline{\frac{C_{0}}{\lambda\lambdat}}\Baseth+ u\Big(\overline{\frac{C_{1}}{p}}\Baseo+ \overline{\frac{[\lambda(1-\mflt)-(\mflo-\lambdat)]C_{1}}{\lambda\lambdat(1-\mflt)}}\Baset +\overline{\alpha}\Baseth\Big)\overset{\vphi_{2}}\mapsto\overline{\frac{\lambda C_{0}}{p^{2}}}\Baseo-\overline{\frac{C_{1}}{p}}\Baset$. (Recall that $\Delta\in 1+\maxi{E}$ and $\frac{\lambdat(1-\mflt)+p\lambda\Delta}{\lambdat(1-\mflt)}\in 1+\maxi{E}$.) Thus, $\Baseth\overset{\vphi_{2}}\mapsto\overline{\frac{\lambda^{2}\lambdat}{p^{2}}}\Baseo$ and $u\Big(\Baseo+\overline{\frac{p\lambda(1-\mflt)-p(\mflo-\lambdat)}{\lambda\lambdat(1-\mflt)}}\Baset\Big) \overset{\vphi_{2}}\mapsto-\Baset$. Since $\vphi(\Baset)\equiv\Baseth$ modulo $\maxi{E}\mfm$, we also have $u^{2}\Baset\overset{\vphi_{2}}\mapsto\Baseth$. $N$ is immediate from the proof of Proposition \ref{prop-D1second}.

We finally show that the Breuil modules in \textbf{(c)} correspond to the mod $p$ reductions of the strongly divisible modules in Proposition \ref{prop-D1third}. We keep the notation as in the proof of Proposition \ref{prop-D1third}. The inequality (\ref{D1-twelfth}) with $\val{C_{0}}\geq2-\val\lambda$ implies that $\val{C_{2}}\geq\min\{\val{\lambdat(\mflo-\lambdat)},\val{\frac{p\lambdat}{\lambda}}\}> \Val{\frac{\lambdat}{\lambda}(\mflo-\lambdat)}$, and so we have $\val{C_{1}}\geq\min\{\val{p\lambda},2-\val{\mflo-\lambdat},\val{\lambda\lambdat}\}>1$ by the inequality (\ref{D1-eleventh}). We may let $p\lambda(\mflo C_{1}+C_{2})-\lambdat(\mflo-\lambdat\Delta)C_{1}=\lambda\lambdat^{2}(\mflo-\lambdat\Delta)\alpha$ and $V=W\beta$ for some $\alpha,\beta\in\roi^{\times}$ by the inequalities (\ref{D1-nineth}) and (\ref{D1-tenth}). Using the inequalities $\val{C_{1}}>1$ and $\val{C_{2}}>\Val{\frac{\lambdat}{\lambda}(\mflo-\lambdat\Delta)}$, one can readily check that $\lambda\lambdat^{2}(\mflo-\lambdat\Delta)\alpha\sequiv p\lambda C_{2}-\lambdat(\mflo-\lambdat\Delta)C_{1}$ modulo $(\lambda\lambdat^{2}(\mflo-\lambdat\Delta))$ and $\Delta(\mflo-\lambdat\Delta)W\beta\sequiv \lambdat(\mflo-\lambdat\Delta)^{2}(\lambdat\Delta C_{0}-pC_{2})-\lambda\lambdat^{3}(1-\mflt)(\mflo-\lambdat\Delta)^{2}\Delta\alpha$ modulo $((\mflo-\lambda\Delta)W)$, by tracking the proof of the inequality (\ref{D1-twelfth}). Hence, we have $\Big(-\overline{\frac{C_{0}}{\lambda\lambdat}}\Baset+ \overline{\frac{(1-\mflt)C_{0}}{p(\mflo-\lambdat\Delta)}}\Baseth\Big)+ u\big(\overline{\alpha}\Baset+\overline{\beta}\Baseth)\overset{\vphi_{2}}\mapsto\overline{\frac{\lambda C_{0}}{p^{2}}}\Baseo+\overline{\frac{p\lambdat(\mflo-\lambdat\Delta)\Delta\beta+ \lambda\lambdat^{2}(1-\mflt)\Delta\alpha}{p\lambdat(\mflo-\lambdat\Delta)}}\Baset- \overline{\frac{\lambda^{2}\lambdat\alpha}{p(\mflo-\lambdat\Delta)}}\Baseth$. (Recall that $\Delta\in 1+\maxid$ and $\frac{\mflo-\lambdat\Delta}{\mflo-\lambdat}\in1+\maxid$.) Thus, $\Baset-\overline{\frac{\lambda\lambdat(1-\mflt)}{p(\mflo-\lambdat)}}\Baseth\overset{\vphi_{2}}\mapsto -\overline{\frac{\lambda^{2}\lambdat}{p^{2}}}\Baseo$, $u\Baset\overset{\vphi_{2}}\mapsto\overline{\frac{\lambda\lambdat(1-\mflt)}{p(\mflo-\lambdat)}}\Baset$, and $u\Baseth\overset{\vphi_{2}}\mapsto\Baset$. Since $\vphi(\Baseo)\equiv\Baseth$ modulo $\maxid\mfm$, $\vphi_{2}(u^{2}\Baseo)=\Baseth$. $N$ is immediate from the proof of Proposition \ref{prop-D1third}.

\begin{prop}\label{prop-Breuilsecond}
Assume that $\frac{1}{2}\leq\val\lambda<1$ and $2\val\lambda+\val\lambdat=2$. Then the reductions modulo $p$ of the semi-stable representations $\mathrm{V^{*}_{st}}(D_{[\frac{1}{2},1]})$ of $G_{\QP}$ are absolutely irreducible if and only if either one of the following holds:
\begin{enumerate}
\item $\Val{\lambda(\mflo-\lambdat)}<\Val{p(1-\mflt)}$ and $\Val{\mflo-\lambdat}<1$;
\item $\Val{p[(\mflo-\lambdat)-\lambda(1-\mflt)]}>\Val{\lambda\lambdat(1-\mflt)}$ and $\val{1-\mflt}<\val\lambda$.
\end{enumerate}
\end{prop}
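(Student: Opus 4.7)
The plan is to mirror the proof of Proposition~\ref{prop-Breuilfirst}, which is already worked out for $D_{[0,\frac{1}{2}]}$, and apply it to the three Breuil modules (a), (b), (c) listed just above that correspond to the three subregions $\mathbf{H(1,1)}$, $\mathbf{H(1,2)}$, $\mathbf{H(1,3)}$ of the parameter space for $D_{[\frac{1}{2},1]}$. Since the subregions exhaust the parameter space, it suffices to determine in each case when the associated Breuil module corresponds to an absolutely irreducible representation, and then collect the answers.

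For case (a), where $\mcm_{2}=\mcf(u\Baseo,u\Baset,u\Baseth)$, the Breuil module is of the shape $\mcm(a_{i,j})$ in Proposition~\ref{non-simple Breuil module prop1}, and so the corresponding representation is always reducible; thus $\mathbf{H(1,1)}$ contributes nothing. For case (b), the Breuil module has the shape $\mcm'(a,b,c,w)$ with
$$w=-\overline{\frac{p[(\mflo-\lambdat)-\lambda(1-\mflt)]}{\lambda\lambdat(1-\mflt)}}.$$
By Proposition~\ref{non-simple Breuil module prop2}, whenever $w\neq 0$ in $\FFF$ there is a nontrivial morphism to some $\mcm(a,b,c,d)$ and the representation is reducible. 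Conversely, when $w=0$, the filtration collapses to $\mcf(u\Baseo,u^{2}\Baset,\Baseth)$, which after reordering the basis is isomorphic to a simple Breuil module $\mcm(s^{i},\cdot,\cdot,\cdot)$ of Example~\ref{simple Breuil module exam}, hence absolutely irreducible by Proposition~\ref{simple Breuil module prop}. Vanishing of $w$ is exactly $\Val{p[(\mflo-\lambdat)-\lambda(1-\mflt)]}>\Val{\lambdat\cdot\lambda(1-\mflt)}$, and combining with $\val{1-\mflt}<\val\lambda$ from $\mathbf{H(1,2)}$ yields condition~(2). Case (c) is completely analogous: the Breuil module has the shape $\mcm(a,b,c,d)$ with
$$d=-\overline{\frac{\lambda\lambdat(1-\mflt)}{p(\mflo-\lambdat)}},$$
and by the same pair of propositions the representation is absolutely irreducible exactly when $d=0$ in $\FFF$, i.e. $\Val{\lambda\lambdat(1-\mflt)}>\Val{p(\mflo-\lambdat)}$; combined with the condition $\val{\mflo-\lambdat}<1$ from $\mathbf{H(1,3)}$, this is condition~(1).

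The main bookkeeping step, and the place where one has to be careful, is converting the condition $d=0$ (respectively $w=0$) into the exact inequalities printed in the proposition. Using $2\val\lambda+\val\lambdat=2$ one verifies
$$\Val{\lambda\lambdat(1-\mflt)}>\Val{p(\mflo-\lambdat)}\ \Longleftrightarrow\ \Val{\lambda(\mflo-\lambdat)}<\Val{p(1-\mflt)},$$
so that condition~(1) is the honest reformulation of simplicity in case (c). A similar elementary manipulation is not needed in case (b), where the vanishing condition on $w$ is already stated in the form of condition~(2). One then checks that the two regions where $d=0$ (respectively $w=0$) are disjoint from case (a) and from each other, so the dichotomy is clean.

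The only real subtlety is ensuring that the auxiliary inequalities inherited from $\mathbf{H(1,2)}$ and $\mathbf{H(1,3)}$ (namely $\val{1-\mflt}<\val\lambda$ and $\val{\mflo-\lambdat}<1$) really do coincide with the side conditions appearing in (2) and (1) respectively; this uses $\frac{1}{2}\leq\val\lambda<1$ and the relation $2\val\lambda+\val\lambdat=2$, and is a direct verification. Everything else is a formal application of Propositions~\ref{simple Breuil module prop}, \ref{non-simple Breuil module prop1}, and \ref{non-simple Breuil module prop2}, in the same pattern as Proposition~\ref{prop-Breuilfirst}.
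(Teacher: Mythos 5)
Your argument is correct and is exactly what the paper does: the paper's proof is just the one-liner that the argument from Proposition~\ref{prop-Breuilfirst} carries over, and your proposal spells out that transfer explicitly, splitting on $\mathbf{H(1,1)}$, $\mathbf{H(1,2)}$, $\mathbf{H(1,3)}$, applying Propositions~\ref{non-simple Breuil module prop1}, \ref{non-simple Breuil module prop2}, and \ref{simple Breuil module prop} to the three Breuil modules, and matching the vanishing of the off-diagonal entry with conditions (1) and (2). The elementary verification you flag (rewriting $\Val{\lambda\lambdat(1-\mflt)}>\Val{p(\mflo-\lambdat)}$ as $\Val{\lambda(\mflo-\lambdat)}<\Val{p(1-\mflt)}$ via $2\val\lambda+\val\lambdat=2$, and checking that the extra constraint in $\mathbf{H(1,3)}$ is automatic when $\val\lambda\geq\frac12$) is indeed the only bookkeeping involved, and you handle it correctly.
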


\begin{proof}
The same argument as in the proof of Proposition~\ref{prop-Breuilfirst} works.
\end{proof}

By Proposition~\ref{Prop-ad-filt-module[1/2,1]}, the reductions modulo $p$ of the representations corresponding to $D_{[\frac{1}{2},1]}$ when $\val{\lambda}=\frac{1}{2}$ and $\mflo=0$ are reducible. This is consistent with the results in Proposition~\ref{prop-Breuilsecond}.

\subsection{Some remarks}
\subsubsection{} Our computation of Breuil modules says that the Breuil modules in Example~\ref{simple Breuil module exam} are exactly the simple Breuil modules that occur as irreducible mod $p$ reductions of the semi-stable and non-crystalline representations of $G_{\QP}$ with Hodge--Tate weights $(0,1,2)$. The irreducible mod $p$ reductions of the strongly divisible modules in Subsections~\ref{subsec-D0second} and \ref{subsec-D1third} (resp., in Subsections~\ref{subsec-D0third} and \ref{subsec-D1second}) correspond to the Breuil modules $\mcm(s,a,b,c)$ (resp., to the Breuil modules $\mcm(s^{2},a,b,c)$) for some $a,b,c\in\FFF^{\times}$. Hence, we conclude, by Proposition~\ref{simple Breuil module prop}, that if $\bar{\rho}:G_{\QP}\rightarrow\mathrm{GL}_{3}(\FPB)$ is an irreducible mod $p$ reduction of a semi-stable and non-crystalline representation with Hodge-Tate weights $(0,1,2)$, then
$\bar{\rho}|_{I_{\QP}}$ is isomorphic to either
\begin{equation*}
\omega_{3}^{2p+1}\oplus\omega_{3}^{2p^{2}+p}\oplus\omega_{3}^{2+p^{2}}\hspace{0.1cm}\mbox{  or  }\hspace{0.1cm} \omega_{3}^{p+2}\oplus\omega_{3}^{p^{2}+2p}\oplus\omega_{3}^{1+2p^{2}}
\end{equation*}
where $\omega_{3}$ is the fundamental character of level $3$.

\subsubsection{}We also claim that if $\val\lambda=\frac{1}{2}$ then the condition (1) (resp., (2)) in Proposition~\ref{prop-Breuilfirst} is equivalent to (2) (resp., (1)) in Proposition~\ref{prop-Breuilsecond} in terms of the identification in Proposition~\ref{Prop-ad-filt-module} (as it should be due to the Proposition~\ref{Prop-ad-filt-module}).

Indeed, we let $D_{[0,\frac{1}{2}]}=D_{[0,\frac{1}{2}]}(\lambda,\lambdat,\mflo,\mflt)$ and $D_{[\frac{1}{2},1]}=D_{[\frac{1}{2},1]}(\lambda,\lambdat,\mflo',\mflt')$ and assume that $\val{\lambda}=\frac{1}{2}$. Then, the condition (2) in Proposition~\ref{prop-Breuilfirst} holds if and only if $\Val{\mflt-\lambdat(\mflo-1)}>\Val{p(\mflo-1)}$ and $\val{\mflo-1}<\val\lambda$, if and only if $\Val{\lambda[\mflt-(\lambdat-p\lambda)(\mflo-1)]}>\Val{p(\lambda-\lambdat)(\mflo-1)}$ and $\Val{(\lambda-\lambdat)(\mflo-1)}<1$, if and only if $\Val{\lambda(\lambdat-p\lambda)(1-\mflt')}> \Val{p[(\lambda-\lambdat)(\mflt'-1)+\mflo']}$ and $\Val{(\lambda-\lambdat)(\mflt'-1)+\mflo'}<1$ (by the identification in Proposition~\ref{Prop-ad-filt-module}), if and only if $\Val{\lambda(1-\mflt')}>\val{\mflo'}$ and $\Val{\mflo'}<1$, if and only if the condition (1) in Proposition~\ref{prop-Breuilsecond} holds. Similarly, the condition (2) in Proposition~\ref{prop-Breuilsecond} holds if and only if $\Val{p[\lambda(1-\mflt')-\mflo']}>\Val{\lambda\lambdat(1-\mflt')}$ and $\val{1-\mflt'}<\val{\lambda}$, if and only if
$\Val{p[(\lambda-\lambdat)(1-\mflt')-\mflo']}>\Val{\lambda(\lambdat-p\lambda)(1-\mflt')}$ and $\Val{(\lambdat-p\lambda)(1-\mflt')}<\val{\lambda\lambdat}$, if and only if $\Val{p(\lambda-\lambdat)(1-\mflo)}>\Val{\lambda[(\lambdat-p\lambda)(1-\mflo)+\mflt]}$ and $\Val{(\lambdat-p\lambda)(1-\mflo)+\mflt}<\val{\lambda\lambdat}$ (by the identification in Proposition~\ref{Prop-ad-filt-module}), if and only if $\Val{p(\mflo-1)}>\val{\mflt}$ and $\val\mflt<\val{\lambda\lambdat}$, if and only if the condition (1) in Proposition~\ref{prop-Breuilfirst} holds.

Moreover, if $\val\lambda=\frac{1}{2}$, then the images of the strongly divisible modules in Subsection~\ref{subsec-D0second} (resp., in Subsection~\ref{subsec-D0third}) under the isomorphism~(\ref{association from 0 to 1}) are homothetic to the strongly divisible modules in Subsection~\ref{subsec-D1third} (resp., in Subsection~\ref{subsec-D1second}), provided that the condition (2) (resp., the condition (1)) in Proposition~\ref{prop-Breuilfirst} and the condition (1) (resp., the condition (2)) in Proposition~\ref{prop-Breuilsecond} hold, in terms of the identification in Proposition~\ref{Prop-ad-filt-module}, since a $p$-adic representation whose mod $p$ reduction is absolutely irreducible has a unique Galois stable lattice up to homothety.

\subsubsection{} Both of the families of strongly divisible modules of $D_{[0,\frac{1}{2}]}$ (resp., of $D_{[\frac{1}{2},1]}$) in Subsections~\ref{subsec-D0second} and~\ref{subsec-D0third} (resp., in Subsections~\ref{subsec-D1second} and ~\ref{subsec-D1third}) are defined when $0<\val\lambda<\frac{1}{2}$, $\val{\mflt+p\lambda}=\val{p(\mflo-1)}$, and $\val{\mflo-1}<\val{\frac{p}{\lambda}}$ (resp., when $\frac{1}{2}<\val\lambda<1$, $\val{\lambda(\mflo-\lambdat)}=\val{p(1-\mflt)}$, and $\val{\mflo-\lambdat}<1$.) They are obviously not homothetic, and so the reduction modulo $p$ of the corresponding representations are reducible, which is consistent with the results in Proposition~\ref{prop-Breuilfirst} (resp., in Proposition~\ref{prop-Breuilsecond}.)

\section*{Acknowledgements}
The author is deeply indebted to David Savitt for kindly suggesting the problem and for his inspiring guidance and encouragement throughout the progress of this work. The author would also like to thank Florian Herzig for numerous helpful comments and suggestions.

\bibliographystyle{alpha}

\end{document}